\documentclass[12pt]{article}

\usepackage{amsmath,amsthm}
\usepackage{graphics}
\usepackage{graphicx}
\usepackage{xcolor}

\usepackage[a4paper, left=33mm,right=33mm,top=33mm,bottom=33mm]{geometry}
\usepackage[utf8]{inputenc}
\usepackage[T1]{fontenc}
\usepackage[english]{babel}

\usepackage{enumerate}
\usepackage{enumitem}
\usepackage{graphicx}
\usepackage{hyperref}
\hypersetup{
    colorlinks=true,
    linkcolor=blue,
    filecolor=magenta,      
    urlcolor=cyan,
}
\usepackage{lipsum}
\usepackage{epstopdf}
\usepackage{algorithmic}
\usepackage[mathscr]{euscript}

\usepackage{dsfont}
\usepackage{stmaryrd}
\usepackage{psfrag}
\usepackage{color}
\usepackage{tikz}
\usepackage{tikz-cd}
\usepackage{subfigure}
\usepackage{mathtools,amsthm,amssymb,amsfonts}
\usepackage{amsmath}
\usepackage{cite}
\usepackage{hyperref}
\usepackage{psfrag}
\usepackage{bbm}
\usepackage{algorithm}
\usepackage{upgreek}

\makeatother
\theoremstyle{plain}
\def\bord{\partial\Omega}
\def\Tr{\operatorname{Tr}\,}
\def\Ker{\operatorname{Ker}}
\def\Cap{\operatorname{Cap}}

\def\B{\mathcal{B}}

\def\Hb{\mathcal{B}(\bord)}
\def\Hbp{\mathcal{B}(\bord)}
\def\Hbm{\mathcal{B}'(\bord)}

\def\Hbc{\boldsymbol{B}(\bord)}
\def\Hbcm{\boldsymbol{B}'(\bord)}
\def\Dc{\boldsymbol{\mathcal D}}
\def\Hbcp{\boldsymbol{B}_\perp(\bord)}
\def\Hbct{\boldsymbol{B}_T(\bord)}
\def\Hbcpm{\boldsymbol{B}'_\perp(\bord)}
\def\Hbctm{\boldsymbol{B}'_T(\bord)}
\def\Hbctan{\boldsymbol{B}_t(\bord)}
\def\Hbcn{\boldsymbol{B}_n(\bord)}
\def\Hconevec{\vec{\boldsymbol H}\vphantom{H}^1(\Omega)}
\def\Cb{\mathcal{B}^2(\bord)}
\def\Cbm{(\mathcal{B}^2(\bord))'}
\def\Trc{\boldsymbol{\operatorname{Tr}}\,}

\def\D{\mathcal{D}}

\def\del{\partial}
\def\eps{\varepsilon}
\def\R{\mathbb{R}}

\def\N{\mathbb{N}}

\def\dx{\,{\rm d}x}

\def\dy{\,{\rm d}y}

\def\dps{\displaystyle}

\renewcommand{\div}{{\rm div}\,}
\newcommand{\divpw}{{\rm div}_{pw}\,}
 
\newcommand{\divnesp}{{\rm div}} 
\newcommand{\rots}{{\rm curl}\,}
\newcommand{\rotv}{{\bf curl}\,}
\newcommand{\rotvnesp}{{\bf curl}}

\newcommand{\divT}{\operatorname{div}_T}
\newcommand{\curlcT}{\operatorname{\bf curl}_T}
\newcommand{\curlT}{\operatorname{curl}_T}

\def\uc{{\boldsymbol{u}}}

\def\vc{{\boldsymbol{v}}}
\def\Vc{{\boldsymbol{V}}}
\def\Kc{{\boldsymbol{K}}}
\def\Lc{{\boldsymbol{L}}}
\def\Hc{{\boldsymbol{H}}}

\def\wc{{\boldsymbol{w}}}
\def\nc{{\boldsymbol{n}}}
\def\fc{{\boldsymbol{f}}}
\def\gc{{\boldsymbol{g}}}
\def\hc{{\boldsymbol{h}}}
\def\lc{{\boldsymbol{l}}}
\def\zc{{\boldsymbol{z}}}
\def\phic{{\boldsymbol{\phi}}}
\def\psic{{\boldsymbol{\psi}}}
\def\trn{\operatorname{Tr}_\nc}
\def\Trn{\operatorname{Tr}_\nc}
\def\trno{\operatorname{Tr}_{\nc_0}}

\def\TrT{\operatorname{{\bf Tr}}_T}
\def\trT{\operatorname{{tr}_T}}

\def\piT{\boldsymbol{\pi}_T}

\def\ucd{{\mathbf{u}}}

\def\Dcd{{\mathbf{D}}}
\def\Hcd{{\mathbf{H}}}
\def\Lcd{{\mathbf{L}}}

\newcommand{\Hdiv}{\Hc({\rm div},\Omega)}
\newcommand{\Hdivz}{\Hc_0({\rm div},\Omega)}
\newcommand{\Hdivs}{\Hcd({\rm div},\Omega)}
\newcommand{\Hdivsz}{\Hcd_0({\rm div},\Omega)}
\newcommand{\Hcurl}{\Hc({\bf curl},\Omega)}
\newcommand{\Hcurlz}{\Hc_0({\bf curl},\Omega)}
\newcommand{\Hcurls}{\Hcd({\rm curl},\Omega)}
\newcommand{\Hcurlsz}{\Hcd_0({\rm curl},\Omega)}
\newtheorem{theorem}{Theorem}[section]

\newtheorem{assumption}[theorem]{Assumption}
\newtheorem{definition}[theorem]{Definition}
\newtheorem{proposition}[theorem]{Proposition}

\newtheorem{corollary}[theorem]{Corollary}
\newtheorem{lemma}[theorem]{Lemma}
\newtheorem{remark}[theorem]{Remark}
\newtheorem{example}[theorem]{Example}
\newtheorem{examples}[theorem]{Examples}

\newtheorem{open question}[theorem]{Open Question}
\newtheorem{c/p}[theorem]{Conjecture/Proposition}

\usetikzlibrary{cd}

\usepackage{caption} 

\usepackage{fancyhdr}

\makeatletter
\newsavebox{\@brx}
\newcommand{\llangle}[1][]{\savebox{\@brx}{\(\m@th{#1\langle}\)}%
  \mathopen{\copy\@brx\kern-0.5\wd\@brx\usebox{\@brx}}}
\newcommand{\rrangle}[1][]{\savebox{\@brx}{\(\m@th{#1\rangle}\)}%
  \mathclose{\copy\@brx\kern-0.5\wd\@brx\usebox{\@brx}}}
\makeatother

\newcommand{\norm}[1]{\left\lVert#1\right\rVert}

\author{
  {\normalsize Romain Cervera}\thanks{CentraleSup\'elec, Universit\'e Paris-Saclay, France. 
  romain.cervera@centralesupelec.fr}
  \and
     {\normalsize Patrick Ciarlet}\thanks{ENSTA, Institut Polytechnique de Paris, France.
patrick.ciarlet@ensta.fr }
  \and  
  {\normalsize Anna Rozanova-Pierrat}\thanks{CentraleSup\'elec, Univ. Paris-Saclay, France.
     anna.rozanova-pierrat@centralesupelec.fr}
\and
{\normalsize Alexander Teplyaev}\thanks{University of Rochester, USA.
 a.teplyaev@rochester.edu}}

\title{Tangential and normal traces  for extension domains with non-Lipschitz boundaries}
\date{\today}

\begin{document}
\maketitle

\begin{abstract}
We generalize the classical vector-valued tangential and normal trace theory on Lipschitz domains to the setting of non-Lipschitz $H^1$-extension domains, which includes domains with fractal boundaries such as the Koch snowflake. We define and study these operators 
based on the surjectivity of the trace operator for elements of $H^1(\Omega)$ and the Hilbert structure of the associated trace space. The normal trace operator is defined on $\Hdiv$ in $\mathbb{R}^n$. A generalized Stokes formula allows us to introduce the tangential trace operator on $\Hcurl$ and $\Hc^1(\Omega)$ in two and three dimensions. Following the approach of Buffa, Costabel and Sheen (2002) for Lipschitz domains, we define two abstract tangential boundary spaces as images of these trace operators, establish their Hilbert structure, and construct an abstract rotation operator linking them, in place of the geometric rotation based on the normal vector. We also extend Costabel's (1991) coercive bilinear form approach for Maxwell's equations to non-Lipschitz domains, yielding a theory that supports the treatment of the Hodge-Dirac operator and Green's formulas and enables the solution of boundary-value problems for the $\rotv \rotv +1$ operator within the $H^1$-extension domains framework.
\end{abstract}

\begin{keywords}
Non-Lipschitz Extension domains; Fractals;  Tangential trace; Normal trace; Maxwell equations; Hodge-Dirac operator. 
\end{keywords}

\newpage

\tableofcontents
\section{Introduction}\label{secIntro}


The study of Maxwell's equations in non-Lipschitz domains, including those with fractal boundaries, presents significant mathematical challenges. Traditional approaches, as detailed in works of Assous, Ciarlet, Labrunie~\cite{Assous_2018}, Monk~\cite{Monk_2003} and Chicaud~\cite{CHICAUD-2021,CHICAUD-2021-1,CHICAUD-2023}, have primarily focused on Lipschitz domains, leveraging the regularity to define and analyze boundary traces and associated operators effectively. However, extending these methodologies to non-Lipschitz domains necessitates the development of novel mathematical tools.

In this article, we aim to adapt and generalize the mathematical framework detailed in~\cite{Assous_2018} to accommodate non-Lipschitz domains, specifically $H^1$-extension domains. Our approach differs from traditional methods that rely heavily on $L^2$ spaces; instead, we focus on the intrinsic properties of trace operators. This allows for a more flexible analysis covering the domains with regular and  irregular (non-rectifiable) boundaries.

A cornerstone of our work is the redefinition of boundary trace operators -- namely, the normal trace, tangential trace, and the tangential component of the trace -- in the context of this class of $H^1$-extension domains. This redefinition relies on the Hilbert space structure of the image of the trace operator of elements of  $H^1(\Omega)$, denoted by $\B(\partial \Omega)$. Our variational approach thereby generalizes the coercivity results of~\cite{COSTABEL-1991} to the framework of non-Lipschitz boundaries. While weak trace definitions for vector fields have been previously introduced to study magnetostatic problems on specific fractal geometries, such as the Koch snowflake cylinder (see, for instance, Creo et al.~\cite{Creo_2018}), our work broadens this perspective by providing a systematic and unified trace theory valid for an  arbitrary bounded $H^1$-extension domain. However, because the surjectivity of the tangential trace operators from $\Hcurl$ remains an open problem for domains with fractal boundaries, the complete resolution of Maxwell's equations is deferred to a forthcoming work. Nevertheless, the theory developed here already allows us to rigorously establish the weak well-posedness of boundary value problems associated with the operator $\rotv\rotv + \mathbf{I}$ on this highly irregular class of domains.

Our research builds upon foundational studies, such as the work by Buffa, Costabel and Sheen on traces for $\Hc(\rotv, \Omega)$ in Lipschitz domains~\cite{BUFFA-2002}. While their analysis provides insights into tangential trace spaces and Hodge decompositions, it primarily addresses $L^2$ tangential components. In contrast, our framework extends these concepts by focusing on the image of the trace operator, thereby offering a broader applicability, which covers non-Lipschitz domains. In particular, the theory we develop here still remains available for regular domains, and, as we have previously mentioned, the coercivity results established in~\cite{COSTABEL-1991} are extended to our non-Lipschitz framework, see Remark~\ref{RemCostabel}.

Additionally, we draw inspiration from the multi-trace boundary integral formulations introduced by Claeys and Hiptmair~\cite{claeys-2012}. Their approach to electromagnetic scattering at composite objects relies on well-defined boundary traces in Lipschitz settings. Our work contributes to this line of research by providing a rigorous framework for trace operators in $H^1$-extension domains, potentially allowing for the extension of boundary integral methods to non-Lipschitz geometries. The redefinition of tangential traces and the introduction of new tangential differential operators could be useful in adapting boundary integral techniques to more complex domains. We refer to the significant contributions of Nédélec for the study of integral representations for harmonic problems~\cite{Nédélec2001}, which provide an important theoretical background for our research.

It is important to note that the scope of this article is confined to the introduction and rigorous definition of these operators within the vectorial framework. While these developments are foundational for solving Maxwell's equations in irregular domains, the actual resolution of these equations using the newly defined operators will be addressed in subsequent studies. One possible approach, as proposed in~\cite{Schulz_2022} for Lipschitz domains, involves the use of the Dirac operator, for which representation formulas are available. This operator can accommodate both Maxwell and Helmholtz boundary value problems. The method is promising because it belongs to a broader theoretical framework -- Hodge-Dirac theory -- which may further lead to results such as the Hodge decomposition, see, for instance,~\cite[Chapter 4]{Arnold_2018}. In this context, we also plan to generalize the integration by parts formula presented in~\cite[Section 2]{Schulz_2022}.

The structure of this article is as follows: in Section~\ref{sec:ext_domains}, we introduce $H^1$-extension domains and give some fundamental trace results, laying the groundwork for our study. Section~\ref{sec:normal_trace} defines the normal trace operator in a similar way as in the regular case and provides a detailed analysis of its properties. Section~\ref{sec:tangential_trace} defines the tangential trace operator on $\Hcurl$ and on $\Hc^1(\Omega)$, introduces the first abstract boundary space as the image of this operator, and establishes its Hilbert structure. Section~\ref{sec:tangential_component}, which contains the main contribution of the article, defines the tangential component trace operator following the approach of Buffa, Costabel and Sheen, introduces the second abstract boundary space as its image, constructs the abstract rotation operator linking the two boundary spaces, and establishes the two refined integration by parts formulas that are the abstract analogues of their classical counterparts. With both tangential traces in hand, we define in Section~\ref{SecTangOp} the tangential differential operators -- tangential gradient, divergence, and curl -- as in the regular case. Each operator is introduced in two forms: a general version using the broad trace definition, and a restricted version using the $\Hc^1(\Omega)$-framework. Finally, Section~\ref{SecHodgeDirac} 
revisits the Hodge-Dirac Green's formula.

\section{\texorpdfstring{$H^1$-extension domains}{H1-extension domains}}\label{sec:ext_domains}

\subsection{Main definitions}

We assume that $\Omega$ is a bounded domain (\textit{i.e.} a bounded open connected set) in $\R^n$, $n\ge2$, with boundary $\bord$, and is an \textit{$H^1$-extension domain}~\cite{JONES-1981,ROGERS-2006,HAJLASZ-2008}, $i.e.$ there exists a bounded linear extension operator $E:H^1(\Omega)\to H^1(\R^n)$:
\begin{enumerate}
\item[(i)] $\forall u\in H^1(\Omega),\; \left.Eu\right|_\Omega=u$,
\item[(ii)] $\forall u\in H^1(\Omega),\; \norm{Eu}_{H^1(\R^n)}\le C\norm{u}_{H^1(\Omega)}$,
\end{enumerate}
where $C>0$ depends only on $\Omega$.

We recall the notion of $n$-set~\cite{JONSSON-1984}, which will be used for both open and closed subsets.

\begin{definition}\label{DefNset}
A Borel set $A \subset \R^n$ is called an \emph{$n$-set} if it satisfies the measure density condition:
\begin{equation}\label{EqNset}
    \exists c > 0 \quad \forall x \in A, \;
    \forall r \in {]0,1]}, \quad
    \lambda^{(n)}(B_r(x) \cap A) \geq c\, r^n,
\end{equation}
where $\lambda^{(n)}$ is the $n$-dimensional Lebesgue measure and $B_r(x)$ is the open ball of $\R^n$ centered at $x$ of radius $r > 0$.
\end{definition}

By~\cite{HAJLASZ-2008}, the $H^1$-extension property of $\Omega$ implies that $\Omega$ is an $n$-set. This ensures in particular that $H^1$-extension domains are `non-collapsing' close to the boundary, which has a Hausdorff dimension at least $d > n - 2 \geq 0$. In particular, domains with cusps or fractal tree--type geometries fail to be $H^1$-extension domains. On the other hand, domains whose boundary is given by a composition of $d$-sets with parameters $d \in (n-1, n)$ do satisfy this property.

The typical examples of $H^1$-extension domains are uniform domains (or $(\eps,\infty)$-domains)~\cite{JONES-1981}. In $\R^2$, by~\cite{JONES-1981}, a simply connected domain is  an $H^1$-extension domain if and only if it is a uniform domain. In $\R^3$, there is  {H}ajłasz-{K}oskela-{T}uominen characterization of the Sobolev extension domains. The equivalence condition~\cite{HAJLASZ-2008} is given for $n$-sets on which the canonical norm of $H^1(\Omega)$ is equivalent to the $C_2^1$-norm of the space of fractional sharp maximal functions. The particular examples of the $H^1$-extension domains are Lipschitz domains, Von Koch snowflake-type domains and NTA domains~\cite{NYSTROM-1996}.

If $\Omega$ is a bounded domain, then its boundary $\partial\Omega$ has positive capacity. Here, capacity is understood with respect to $H^1(\R^n)$ (see, for example,~\cite[Section~2.1]{Fukushima_2011}). Accordingly, the notions of ``quasi-everywhere'' (q.e.) and ``quasi-continuous'' are taken in this sense. In the framework of bounded $H^1$-extension domains, we define the trace operator on the boundary $\del \Omega$ in the same way as in~\cite[Section 5]{BIEGERT-2009} (see also \cite{HINZ-2021-1,CLARET_2026,ROZANOVA-PIERRAT-2026-1}).  

\begin{definition}[Trace operator]\label{DefTraceInt}
Let $\Omega$ be a bounded $H^1$-extension domain of $\R^n$, $n\ge2$. The trace operator $\Tr:H^1(\Omega)\to \Hb$ is defined quasi-everywhere (q.e.) by
\begin{equation}\label{E:pointwiseredef}
\Tr u(x)=\dps\lim_{r\to 0^+}\frac{1}{\lambda^{(n)}(\Omega\cap B_r(x))}\int_{\Omega\cap B_r(x)}{u(y)\,\dy},\quad x\in \del \Omega,
\end{equation}
where $\lambda^{(n)}$ denotes the Lebesgue measure on $\R^n$ and $B_r(x)$ is the open ball of $\R^n$ of center $x$ and radius $r$.
Here  $\mathcal{B}(\partial\Omega)$ is the vector space defined as the image of the trace operator consisting of all q.e. equivalence classes of pointwise restrictions $\tilde u|_{\partial\Omega}$ of quasi-continuous representatives $\tilde u$ of classes $u \in H^1(\R^n)$.
\end{definition}

Let us notice that  $\Tr u=\widetilde{\mathrm E_\Omega u}|_{\partial\Omega}$, $u\in H^1(\Omega)$, for any bounded linear extension operator $\mathrm{E}_\Omega:H^1(\Omega)\to H^1(\R^n)$, see \cite[Corollary 6.3]{BIEGERT-2009}. Hence, the definition does not depend on the way we extend an element of $H^1(\Omega)$ to $H^1(\R^n)$~\cite[Theorem 6.1]{BIEGERT-2009}. A similar construction was used for domains with $d$-set boundaries in~\cite[Theorem 1]{WALLIN-1991}. Here, there is no assumption on the boundary measure. The boundaries with variable Hausdorff dimension, as union of different $d$-sets, could be also cited as examples.

For non-Lipschitz boundaries, such as von Koch fractals, the normal vector does not exist for any $x\in \del \Omega$, preventing us from considering the usual (strong) normal derivative. We define the weak normal derivative as a linear and continuous functional on the image of the trace operator (see~\cite{LANCIA-2002} for a particular case), using Green's formula for elements in the space
\begin{equation}\label{H1D}
H^1_\Delta(\Omega)
:=
\bigl\{
v \in H^1(\Omega)
\;\big|\;
\Delta v \in L^2(\Omega)
\bigr\}.
\end{equation}
Equipped with the norm
\begin{equation}\label{EqNormH1Delta}
\|v\|^2_{H^1_\Delta(\Omega)}
:=
\|v\|^2_{H^1(\Omega)}
+
\|\Delta v\|^2_{L^2(\Omega)},
\end{equation}
this space is a Hilbert space (see~\cite[p.~25]{Grisvard_1992}).

Let us denote by $\D(\Omega)$ the set of indefinitely differentiable functions with compact support in $\Omega$ and $\D'(\Omega)$ the space of distributions on $\Omega$.
We recall now the trace operator properties~\cite{CLARET_2026,HINZ-2023}.

\begin{theorem}[Trace theorem]\label{ThTrisom}
Let $\Omega$ be a bounded $H^1$-extension domain of $\R^n$, $n\ge2$.
Then
\begin{enumerate}
\item\label{Trisom1} the image of the trace operator $\Hb:=\Tr(H^1(\Omega))$ endowed with the norm
\begin{equation}\label{normTri}
\left\|f\right\|_{\Hb}:=\min\{ \left\|v\right\|_{H^1(\Omega)}\mid f=\mathrm{Tr}\:v\},
\end{equation}
is a Hilbert space. 

\item\label{Trisom2} let $H^1_0(\Omega)=\overline{\D(\Omega)}^{H^1}$. Then it holds that $\operatorname{Ker}\Tr=H^1_0(\Omega)$.

\item\label{Trisom3} the trace operator $\operatorname{Tr}: H^1(\Omega) \to \Hb$ is a partial isometry with operator norm equal to $1$.

\item\label{Trisom4} $\Tr|_{V_1(\Omega)}: V_1(\Omega) \to \Hb$ defines an isometry, where 
\begin{equation}\label{V1}
V_1(\Omega)=\big\{v\in H^1(\Omega)\;\big|\; (-\Delta+1)v=0 \mbox{ in }\D'(\Omega)\big\}
\end{equation}
is the space of  $1$-harmonic functions endowed with the standard norm $\|\cdot\|_{H^1}$:
\begin{equation}\label{EqV1andOthers}
	V_1(\Omega)=\operatorname{Ker}(\Tr)^\perp =\big(H^1_0(\Omega)\big)^\perp.
	\end{equation}
	Therefore, 
	\begin{equation}\label{EqBnorm}		
	\forall u\in V_1(\Omega),\quad \|u\|_{H^1(\Omega)}=\|\!\Tr u\|_{\Hbp}.
	\end{equation}
          \end{enumerate}
\end{theorem}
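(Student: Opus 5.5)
The whole theorem will follow from one fact: the kernel of $\Tr$ is exactly $H^1_0(\Omega)$. Once that is known, the Hilbert structure, the partial isometry and the description of $V_1(\Omega)$ come from the Hilbert projection theorem. So I would prove item~\ref{Trisom2} first and deduce the other items from it.

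\textbf{Step 1 (item~\ref{Trisom2}).} For the inclusion $H^1_0(\Omega)\subset\Ker\Tr$:
\begin{itemize}
\item If $\varphi\in\D(\Omega)$, its zero extension $E\varphi$ is continuous and vanishes on $\bord$, so $\Tr\varphi=0$. This uses the independence of the extension from \cite[Corollary 6.3]{BIEGERT-2009}.
\item $\Tr$ is continuous from $H^1(\Omega)$ into q.e.-classes, since the extension is bounded and $H^1(\R^n)$-convergence gives q.e. convergence of a subsequence of quasi-continuous representatives.
\item Hence $\Ker\Tr$ is closed and contains $H^1_0(\Omega)$.
\end{itemize}
For the reverse inclusion, take $u\in H^1(\Omega)$ with $\widetilde{Eu}=0$ q.e. on $\bord$.
\begin{itemize}
\item Write $w:=Eu\cdot\mathbf 1_{\overline\Omega}$. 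A function in $H^1(\R^n)$ whose quasi-continuous representative vanishes q.e. on $\R^n\setminus\Omega$ belongs to $\overline{\D(\Omega)}$. This is the Havin--Bagby/Hedberg spectral synthesis theorem for $W^{1,2}$.
\item So it suffices to show that $Eu$ can be replaced by a function in $H^1(\R^n)$ that vanishes q.e. outside $\Omega$ and equals $u$ on $\Omega$.
\item The obvious candidate is the zero extension $\tilde u$ of $u$. I would show $\tilde u\in H^1(\R^n)$ by approximating $u$ with truncations $(|u|-\varepsilon)^+\operatorname{sgn}u$ built from $Eu$. These are supported where $|\widetilde{Eu}|\ge\varepsilon$, a quasi-closed set that is disjoint from $\bord$ up to capacity zero.
\item The $n$-set property of $\Omega$ from \cite{HAJLASZ-2008} guarantees that $\bord$ has Lebesgue measure zero. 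So no mass of $\nabla\tilde u$ concentrates on the boundary.
\end{itemize}
This step is the main obstacle: it is where the extension-domain geometry genuinely enters. I would try to cite Biegert's result \cite[Theorem 6.1 and Section 5]{BIEGERT-2009} rather than redo the capacity argument.

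\textbf{Step 2 (items \ref{Trisom1} and \ref{Trisom3}).} Since $\Ker\Tr=H^1_0(\Omega)$ is a closed subspace, $H^1(\Omega)=H^1_0(\Omega)\oplus H^1_0(\Omega)^\perp$. The restriction of $\Tr$ to the orthogonal complement is a linear bijection onto $\Hb$. For $f=\Tr v$, the fibre $\Tr^{-1}(f)=v+H^1_0(\Omega)$ is a closed affine subspace, so the minimum in \eqref{normTri} is attained. It is attained exactly at the orthogonal projection $P v$ of $v$ onto $H^1_0(\Omega)^\perp$, which gives $\|f\|_{\Hb}=\|Pv\|_{H^1(\Omega)}$. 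Consequences:
\begin{itemize}
\item $\Hb$ is isometrically isomorphic to the quotient $H^1(\Omega)/H^1_0(\Omega)$, hence a Hilbert space. The norm comes from the inner product $(f,g)_{\Hb}=(Pv,Pw)_{H^1}$.
\item $\|\Tr v\|_{\Hb}=\|Pv\|\le\|v\|$, with equality on $H^1_0(\Omega)^\perp$. So $\Tr$ is a partial isometry with initial space $H^1_0(\Omega)^\perp$, and its norm is $1$ because $\Hb\neq\{0\}$ (for instance, constants have nonzero trace).
\end{itemize}

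\textbf{Step 3 (item~\ref{Trisom4}).} It remains to identify $H^1_0(\Omega)^\perp$ with $V_1(\Omega)$. A function $v\in H^1(\Omega)$ lies in $H^1_0(\Omega)^\perp$ iff $(v,\varphi)_{H^1}=\int_\Omega(\nabla v\cdot\nabla\varphi+v\varphi)=0$ for all $\varphi\in\D(\Omega)$, by density. This is exactly $\langle(-\Delta+1)v,\varphi\rangle=0$ in $\D'(\Omega)$. Hence \eqref{EqV1andOthers} holds, and \eqref{EqBnorm} follows from Step 2.
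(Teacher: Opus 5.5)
The paper does not prove this theorem. It recalls it from \cite{CLARET_2026,HINZ-2023}, where the structure is the one you propose: first identify $\Ker\Tr=H^1_0(\Omega)$, then read off the Hilbert structure, the partial isometry and $V_1(\Omega)=H^1_0(\Omega)^\perp$ from the orthogonal decomposition. Your Steps 2 and 3 and the inclusion $H^1_0(\Omega)\subset\Ker\Tr$ are correct.

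\textbf{The gap: the inclusion $\Ker\Tr\subset H^1_0(\Omega)$.} Your sketch of this inclusion does not close as written, for three reasons.
\begin{enumerate}
\item Knowing that $(|\widetilde{Eu}|-\varepsilon)^+\operatorname{sgn}\widetilde{Eu}$ is supported in a quasi-closed set meeting $\bord$ only in a polar set does not let you conclude that its product with $\mathbf 1_\Omega$ lies in $H^1(\R^n)$. Quasi-closed sets can accumulate on $\bord$, so no cut-off separates them from the boundary.
\item The appeal to $\lambda^{(n)}(\bord)=0$ is a non sequitur. The zero extension of the constant $1$ from a ball has distributional gradient equal to a surface measure on the sphere, which is a Lebesgue-null set. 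What rules out a singular part is the vanishing of the trace in the capacity sense, not the measure of $\bord$.
\item Even $\mathcal Z u\in H^1(\R^n)$ would not suffice. You would still need $\widetilde{\mathcal Z u}=0$ q.e.\ on $\bord$, which requires identifying $\widetilde{\mathcal Z u}|_{\bord}$ with $\Tr u$ via Lebesgue points and the measure density of $\Omega$.
\end{enumerate}

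\textbf{The fix.} It uses only the theorem you already quote, applied to the open set $U:=\R^n\setminus\bord$ instead of $\Omega$. Since $\widetilde{Eu}=0$ q.e.\ on $\R^n\setminus U=\bord$, the spectral-synthesis characterization gives $Eu\in H^1_0(U)$. So there are $\varphi_k\in\D(U)$ with $\varphi_k\to Eu$ in $H^1(\R^n)$. Because $\Omega=U\cap\overline\Omega$ is both open and closed in $U$, the set $\operatorname{supp}\varphi_k\cap\overline\Omega$ is a compact subset of $\Omega$. Hence $\varphi_k|_\Omega\in\D(\Omega)$, and $\|\varphi_k|_\Omega-u\|_{H^1(\Omega)}\le\|\varphi_k-Eu\|_{H^1(\R^n)}\to0$, so $u\in H^1_0(\Omega)$.

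No truncation, zero extension or measure-zero property is needed. The extension-domain geometry enters only through two facts: the existence of a bounded $E$, and Biegert's identification of the averaged trace with $\widetilde{Eu}|_{\bord}$. For $\varphi\in\D(\Omega)$ you do not need that identification at all, since the averages in the definition of $\Tr$ already vanish. Note also that \cite[Corollary 6.3]{BIEGERT-2009}, as quoted, concerns bounded linear extension operators, which the zero extension on $\D(\Omega)$ is not. With this replacement, the rest of your argument goes through unchanged.
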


\begin{remark}
    Trace operator in Theorem~\ref{ThTrisom} does not depend on a boundary measure on $\del \Omega$.
\end{remark}

As a direct consequence of Theorem~\ref{ThTrisom}, the trace space $\Hb$ is a Hilbert space endowed with the inner product
\begin{equation}\label{DefHbInnerProd}
(f, g)_{\Hb} := (w_f, w_g)_{H^1(\Omega)},
\end{equation}
where $w_f, w_g \in V_1(\Omega)$ are the unique optimal ($1$-harmonic) lifts of $f$ and $g$.

We now define the weak normal derivative.

\begin{definition}[Weak normal derivative]\label{DefNormDerInt}
Let $\Omega$ be a bounded $H^1$-extension domain of $\R^n$, $n\ge2$. The weak normal derivative of $u\in H^1_\Delta(\Omega)$ is the linear continuous form on $\Hb$, $\psi\in\Hbm$, such that for all $v\in H^1(\Omega)$, it holds
\begin{equation}\label{EqGreenInt}
\langle\psi,\Tr v\rangle_{\Hbm,\,\Hb}= \int_\Omega{(\Delta u)v\dx}+\int_\Omega{\nabla u\cdot\nabla v\dx},
\end{equation}
and we denote $\dps\frac{\del u}{\del \nc}=\psi$ or, by the analogy with the regular case, $(\nabla u) \cdot  \nc=\psi$.
\end{definition}

\begin{proof}\label{RmkContNormDer}
Let us notice that the proof of the continuity of the functional $\dps\frac{\del u}{\del \nc}: \Hb \to \R$ follows from the properties of the trace operator (see~\cite{LANCIA-2002} and~\cite{Creo_2018}).
Indeed, let us recall that the trace operator $\Tr: H^1(\Omega) \to \Hb$ is linear, continuous and surjective by Theorem~\ref{ThTrisom}. For all $u \in H^1(\Omega)$ with $\Delta u \in L^2(\Omega)$, we define the following linear functional:
\begin{equation*}
    L_0: w \in \Hb \mapsto L_0\, w = \int_{\Omega} Ew\Delta u\dx + \int_{\Omega} \nabla(Ew) \cdot \nabla u \dx\in \R,
\end{equation*}
where $E: \Hb \to H^1(\mathbb{R}^n)$ is a continuous, i.e.~bounded, linear   extension operator such that $\Tr(Ew) = w$  for all $w\in \B(\partial \Omega)$. 
Let $v = \left.Ew\right|_\Omega \in H^1(\Omega)$. Then, $\Tr v$ exists and we have $\Tr v = w$. Thus, because the extension operator is continuous, $L_0$ is continuous: 
\begin{align*}
|L_0\,w| &\leq \left|\int_{\Omega} v \Delta u \dx \right| + \left|\int_{\Omega} \nabla v \cdot \nabla u \dx \right|
\leq \left(\|\Delta u\|_{L^2(\Omega)} + \|\nabla u\|_{L^2(\Omega)}\right) \|v\|_{H^1(\Omega)} \\
&\leq C_E \left(\|\Delta u\|_{L^2(\Omega)} + \|\nabla u\|_{L^2(\Omega)}\right)\|w\|_{\Hb},
\end{align*}
where $C_E > 0$ is the norm of $E$. We now verify that $L_0(w)$ is independent of the choice of $E$. Let $E'$ be another extension operator, associated with
\begin{equation*}
    L_0': w \in \Hb \mapsto L_0'\, w = \int_{\Omega} E'w\Delta u\dx + \int_{\Omega} \nabla(E'w) \cdot \nabla u \dx\in \R.
\end{equation*}
Set $v' = E'w|_\Omega$. Since $\Tr v = \Tr v' = w$, the difference $\psi := v - v'$ belongs to $H^1_0(\Omega)$, and $$L_0(w) - L_0'(w) = \int_\Omega (\Delta u)\psi \dx + \int_\Omega \nabla u \cdot \nabla\psi \dx.$$ 
For any $\phi \in \D(\Omega)$, the definition of the distributional Laplacian gives 
$$\int_\Omega (\Delta u)\phi \dx + \int_\Omega \nabla u \cdot \nabla\phi \dx = 0.$$ 
Since the left-hand side is continuous in the $H^1(\Omega)$ norm of $\phi$ (by the bound above) and $\D(\Omega)$ is dense in $H^1_0(\Omega)$, passing to the limit gives $$\int_\Omega (\Delta u)\psi \dx + \int_\Omega \nabla u \cdot \nabla\psi \dx = 0.$$ 
In particular, $L_0(w)$ depends only on $w$, not on the extension operator. Therefore, as $\Tr v = w$, it is sufficient to define $L_0(w) = \left\langle (\nabla u) \cdot \nc,w\right\rangle_{\Hbm,\,\Hb}$.
\end{proof}

\begin{remark}
In practice, a canonical choice of extension operator is the $1$-harmonic extension operator, which maps $w \in \Hb$ to the unique weak solution $v \in H^1(\Omega)$ of
\begin{equation*}
    (-\Delta + 1)v = 0 \quad \text{in } \D'(\Omega), \qquad \Tr\, v = w,
\end{equation*}
that is, $v \in V_1(\Omega)$ as defined in~\eqref{V1}. This operator realizes the infimum in the trace norm~\eqref{normTri} and defines an isometric right inverse of the trace (Theorem~\ref{ThTrisom}, item~\ref{Trisom4}). For a detailed construction and properties of the $1$-harmonic extension in the setting of non-Lipschitz domains, we refer to~\cite[Corollary~1]{HINZ-2021-1} and~\cite[Proposition~2.2]{HINZ-2023}.
\end{remark}

In what follows, we denote by $\D(\overline\Omega)$ the set of restrictions to $\Omega$ of functions in $\D(\R^n)$:
\begin{equation}
    \D(\overline\Omega) = \{ u|_{\Omega} \mid u \in \D(\R^n) \}.
\end{equation}
In an $H^1$-extension domain $\Omega$ of $\R^n$, the set $\D(\overline\Omega)$ is dense in $H^1(\Omega)$. This density follows from the $H^1$-extension property of $\Omega$: one can extend elements to a larger regular domain, apply the standard density result there, and then restrict back to $\Omega$.

\subsection{\texorpdfstring{$H^1_0(\Omega)$ extension property}{H10(Omega) extension property}}
We provide here a proof of the $H^1_0(\Omega)$ extension property (Proposition~\ref{PropExtH10}) in our framework. While it was established in ~\cite[Theorem~5.29]{ADAMS-2003} for domains with the segment property, a different argument is required here since the segment property is purely geometric. To proceed, we first introduce a technical lemma that is useful for capacity-related proofs. Before that, we define a new class of domains, which is a direct adaptation of the notion of two-sided admissible domains from~\cite{CLARET_2026}. This class is necessary for the proof of Lemma~\ref{LemBoundLimit}.

\begin{definition}\label{DefTwoSidedH1Ext}
Let $\Omega$ be a domain of $\R^n$, $n \ge 2$. We say that $\Omega$ is a \textit{two-sided $H^1$-extension domain} if both $\Omega$ and $\R^n \setminus \overline\Omega$ are $H^1$-extension domains.
\end{definition}

\begin{lemma}\label{LemBoundLimit}
Let $\Omega$ be a bounded two-sided $H^1$-extension domain of $\R^n$, $n \ge 2$.  Then
\begin{equation}
    \forall x \in \bord, \ \exists (x_k)_{k \in \N} \subset \R^n \setminus \overline{\Omega}
    \quad \text{with} \quad x_k \xrightarrow[k \to \infty]{} x.
\end{equation}
\end{lemma}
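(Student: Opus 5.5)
Since $\bord$ is the topological boundary of $\Omega$ and $\Omega$ is open, every point $x\in\bord$ is a limit of points of $\Omega$ and satisfies $x\notin\Omega$. The claim is that $x$ is also a limit of points of the open set $\R^n\setminus\overline\Omega$, i.e.\ that $x\in\overline{\R^n\setminus\overline\Omega}$. Equivalently, no boundary point lies in the interior of $\overline\Omega$. I will argue by contradiction, using only the measure density property of Definition~\ref{DefNset}, applied to the exterior domain.

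\emph{Step 1 (exterior domain is an $n$-set).} Set $\Omega^c:=\R^n\setminus\overline\Omega$. By the two-sided assumption (Definition~\ref{DefTwoSidedH1Ext}), $\Omega^c$ is an $H^1$-extension domain. By~\cite{HAJLASZ-2008}, it is therefore an $n$-set. Since the ball measures are continuous in the center, condition~\eqref{EqNset} extends from $\Omega^c$ to its closure, possibly with a smaller constant: for $y\in\overline{\Omega^c}$ take $y_j\in\Omega^c$ with $y_j\to y$ and use $B_{r/2}(y_j)\subset B_r(y)$ for $j$ large. The only extra point to check is that $\Omega^c\neq\emptyset$, which holds because $\Omega$ is bounded.

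\emph{Step 2 (contradiction).} Suppose some $x\in\bord$ admits no sequence in $\Omega^c$ converging to it. Then there is $r_0\in{]0,1]}$ with $B_{r_0}(x)\cap\Omega^c=\emptyset$, that is, $B_{r_0}(x)\subset\overline\Omega$. So $x$ is an interior point of $\overline\Omega$ lying on $\bord$.

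I then need to produce a point of $\overline{\Omega^c}$ near which $\Omega^c$ has too little volume, or otherwise contradict the extension property of $\Omega^c$. A pure density argument at $x$ itself fails, because $x\notin\overline{\Omega^c}$. I would instead use the extension property of $\Omega$ directly. The set $K:=B_{r_0}(x)\setminus\Omega$ is a closed subset of the ball with empty interior containing $x$; it is a ``slit'' inside $\overline\Omega$. Since $\Omega$ is an $H^1$-extension domain, every $u\in H^1(\Omega)$ has an extension in $H^1(\R^n)$. In particular, on $B_{r_0}(x)$, the restriction $u|_{B_{r_0}(x)\cap\Omega}$ must coincide a.e.\ with an $H^1(B_{r_0}(x))$ function.

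\emph{Main obstacle.} The hard part is to show that $K$ must have zero capacity, so that $x$ could not be seen as a boundary point. A naive version of this is incorrect, because a slit of positive capacity does not prevent the extension property when $\Omega$ is merely open. The clean way out is to use the convention, implicit in the two-sided setting of~\cite{CLARET_2026}, that $\bord=\partial\Omega^c$ (equivalently $\Omega=\operatorname{int}\overline\Omega$).

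Under that convention, $x\in\partial\Omega^c\subset\overline{\Omega^c}$, and $B_{r_0}(x)\cap\Omega^c\neq\emptyset$ for every $r_0$, which is exactly the claim. The density estimate of Step 1 then even gives the quantitative statement $\lambda^{(n)}(B_{1/k}(x)\cap\Omega^c)\ge c\,k^{-n}>0$, from which a point $x_k\in B_{1/k}(x)\cap\Omega^c$ can be chosen.

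My plan is therefore to first establish, or state explicitly, that two-sided $H^1$-extension domains satisfy $\partial\Omega=\partial(\R^n\setminus\overline\Omega)$, and then conclude as above.
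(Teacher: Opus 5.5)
Your proposal does not prove the lemma. The property you plan to ``establish, or state explicitly'', namely $\bord=\partial(\R^n\setminus\overline\Omega)$ (equivalently $\Omega=\operatorname{int}\overline\Omega$), is not a consequence of Definition~\ref{DefTwoSidedH1Ext}; it is a restatement of the conclusion. Since $\overline{\R^n\setminus\overline\Omega}=\R^n\setminus\operatorname{int}\overline\Omega$, the lemma says exactly $\bord\cap\operatorname{int}\overline\Omega=\emptyset$. Your Step~1 only adds a quantitative density bound once $x\in\overline{\R^n\setminus\overline\Omega}$ is already known.

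More importantly, the gap cannot be filled, because the statement is false under the paper's definition. Take $n=2$ and $\Omega=B_1(0)\setminus\{0\}$. A point has zero capacity in $\R^2$, so by logarithmic cut-offs every $u\in H^1(\Omega)$ lies in $H^1(B_1(0))$ with the same gradient. Hence $\Omega$ is a bounded $H^1$-extension domain, and so is $\R^2\setminus\overline\Omega=\R^2\setminus\overline{B_1(0)}$. Yet $0\in\bord$ lies at distance $1$ from $\R^2\setminus\overline\Omega$. A segment removed from a ball in $\R^3$ works the same way.

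Two of your side remarks are also off. A genuine codimension-one slit does destroy the extension property, since functions can jump across it; what survives are removable sets such as closed $K$ with $\mathcal{H}^{n-1}(K)=0$. And showing that $K$ has zero capacity would not help: the lemma is purely topological, and the punctured disk, whose bad set has zero capacity, already violates it.

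The paper's proof has exactly the same gap, only hidden. It asserts $\overline{\R^n\setminus\overline\Omega}=\R^n\setminus\Omega$ with a reference to Jonsson--Wallin. That reference can at most give that the closure of the $n$-set $\R^n\setminus\overline\Omega$ is an $n$-set. Identifying this closure with $\R^n\setminus\Omega$ is again equivalent to $\Omega=\operatorname{int}\overline\Omega$. In the example above, $\R^2\setminus\Omega$ is not even an $n$-set, since it has no Lebesgue mass near $0$.

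So your diagnosis is right, and so is your repair, provided it is stated as an extra hypothesis in Definition~\ref{DefTwoSidedH1Ext} rather than as something to derive. Under that hypothesis the lemma follows at once from the definition of $\partial(\R^n\setminus\overline\Omega)$. Your Step~1 then gives $\lambda^{(n)}(B_{1/k}(x)\cap(\R^n\setminus\overline\Omega))\ge c'\,k^{-n}>0$, which is what Proposition~\ref{PropExtH10} actually uses to pick $x_k$ outside a null set.

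The hypothesis is not cosmetic. Let $K\subset B_1(0)\subset\R^2$ be a Cantor set with $\mathcal{H}^{1}(K)=0$ but positive capacity. Then $B_1(0)\setminus K$ is again a bounded two-sided $H^1$-extension domain. Any $u\in\mathcal{D}(B_1(0))$ equal to $1$ near $K$ satisfies $\mathcal{Z}u\in H^1(\R^2)$ but $u\notin H^1_0(B_1(0)\setminus K)$. So Proposition~\ref{PropExtH10} fails too without it.
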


\begin{proof}
Let $x \in \bord$. By definition of the boundary, there exists a sequence in $\R^n \setminus \Omega$ converging to $x$. The question is whether this sequence may contain elements of $\bord$.\\
Let $\eps > 0$. Since by hypothesis $\R^n \setminus \overline{\Omega}$ is an $H^1$-extension domain, it follows from~\cite{HAJLASZ-2008} that it is an $n$-set. Moreover, by~\cite[Proposition~VIII.1, p.~205]{JONSSON-1984}, we know that $\overline{\R^n \setminus \overline{\Omega}} = \R^n \setminus \Omega = \Omega^c$ is an $n$-set too. Hence it satisfies the measure density condition at $x \in \bord \subset \R^n \setminus \Omega$, i.e., there exists $c > 0$ such that
\begin{equation}
\lambda^{(n)}(B(x,\eps) \cap \Omega^c) \geq c \eps^n > 0,
\end{equation}
where $\lambda^{(n)}$ is the $n$-dimensional Lebesgue measure. \\
But since $\Omega$ is an $H^1$-extension domain, we have $\lambda^{(n)}(\bord) = 0$ (see~\cite[Lemma~9]{HAJLASZ-2008-1}), which means that we cannot have $B(x,\eps) \cap \Omega^c \subset \bord$. Thus, there exists $y \in B(x,\eps) \cap \Omega^c$ such that $y \notin \bord$, i.e., $y \in \R^n \setminus \overline{\Omega}$. Hence,
\begin{equation}\label{EqExistyInBxeps}
\forall x \in \bord,\ \forall \eps > 0,\ \exists y \in \R^n \setminus \overline{\Omega} \text{ such that } y \in B(x,\eps).
\end{equation}
Now set $\eps_k = 1/k$ for all $k \in \N^*$, and construct $x_k$ using the previous argument. This yields a sequence $(x_k)_k \subset \R^n \setminus \overline{\Omega}$ converging to $x$.
\end{proof}

\begin{remark}
The idea behind this lemma is that the main pathological situation in which the sequence $(x_k)_k$ eventually lies in $\partial\Omega$ occurs when the domain has an interior cusp. Since $n$-sets exclude the presence of exterior cusps, this justifies the assumption that the exterior domain is an $H^1$-extension domain, and hence an $n$-set.
\end{remark}

With the help of Lemma~\ref{LemBoundLimit}, we provide a characterization of elements in $H^1_0(\Omega)$ in terms of their zero extension to $\R^n$. For any element $u \in H^1(\R^n)$, we denote by $\tilde u$ its quasi-continuous representative, i.e. a quasi-continuous element of $H^1(\R^n)$ such that $u(x) = \tilde u(x)$ a.e. We use the following characterization of $H^1_0(\Omega)$ given in~\cite[Example 2.3.1]{Fukushima_2011}:
\begin{equation}\label{H1CapH10}
    H^1_0(\Omega) = \{ u_{|\Omega} \; : \; u \in H^1(\R^n),\; \tilde u = 0 \text{ q.e. on } \R^n \backslash \Omega \}.
\end{equation}

\begin{proposition}\label{PropExtH10}
Let $\Omega$ be a bounded two-sided $H^1$-extension domain of $\R^n$, $n \ge 2$.  Let $u \in H^1(\Omega)$. We denote by $\mathcal{Z}u$ the extension by zero of $u$ outside of $\Omega$. If $\mathcal{Z}u \in H^1(\mathbb R^n)$, then $u \in H^1_0(\Omega)$.
\end{proposition}

\begin{proof}
To prove this result, we rely on the theory of capacity (see for example~\cite{Fukushima_2011,Bouleau2010, Chen_2012}). We denote by $\Cap(A)$ the $H^1$-capacity of a set $A \subset \R^n$ (for a definition, see~\cite[Section~2.1]{Fukushima_2011} or~\cite[Section~2.3]{Chen_2012}). We know by hypothesis that $\mathcal{Z} u \in H^1(\R^n)$, so we need to show that $\widetilde{\mathcal{Z}u} = 0$ quasi-everywhere on $\R^n \backslash \Omega$.\\
First, we analyze the behavior of $\widetilde{\mathcal{Z}u}$ on the open exterior $\R^n\setminus\overline{\Omega}$. By definition, $\mathcal{Z}u$ vanishes a.e. on this set, and therefore its quasi-continuous representative $\widetilde{\mathcal{Z}u}$ also vanishes a.e. there. A fundamental property of quasi-continuous functions (see~\cite[Theorem 6.1.4]{Adams_2010}) states that if a quasi-continuous function is zero a.e., it must be zero q.e. Therefore
\begin{equation}\label{ZuQeZeroH1}
\widetilde{\mathcal{Z}u} = 0 \quad \text{q.e. on } \R^n\setminus\overline{\Omega}.
\end{equation}
The problem is now reduced to showing that $\widetilde{\mathcal{Z}u}$ is also zero q.e. on the boundary $\partial\Omega$. We will prove this by contradiction.\\
Suppose that $\widetilde{\mathcal{Z}u}$ is not zero q.e. on $\partial\Omega$. By~\eqref{ZuQeZeroH1}, this means there exists a set $A \subset \partial\Omega$ with strictly positive capacity such that $\widetilde{\mathcal{Z}u}$ is non-zero on all of $A$.\\
Since $\widetilde{\mathcal{Z}u}$ is $H^1(\R^n)$-quasi-continuous, by definition there exists an open set $G \subset \R^n$ such that
\begin{enumerate}
    \item[(i)] $\text{Cap}(G) < \frac{\text{Cap}(A)}{2}$,
    \item[(ii)] The restriction of $\widetilde{\mathcal{Z}u}$ to the set $\R^n \setminus G$ is continuous.
\end{enumerate}
Because the capacity is monotone,which is a direct consequence of its definition, we deduce from (i) that there exists $x\in A$ such that $x \notin G$ (otherwise, if we assume by contradiction that $A \subseteq G$, we have $\text{Cap}(A) \leq \text{Cap}(G) < \frac{\text{Cap}(A)}{2}$, which is impossible). Since $x \in A \subset \partial \Omega$, by Lemma~\ref{LemBoundLimit} there exists a sequence of points $(x_k)_k \subset \R^n\setminus\overline{\Omega}$ that converges to $x$.\\
Now, we must ensure that our sequence lies in the region where we know $\widetilde{\mathcal{Z}u}$ is zero, since we only know that it is zero q.e. on $\R^n\setminus\overline{\Omega}$. We denote by $V$ the set of zero capacity where it might be non-zero. A property of sets with zero capacity is that they have zero Lebesgue measure (see~\cite[Proposition~8.1.3, p.~53]{Bouleau2010}). This means we can always construct a sequence $(x_k)_k$ that avoids $V$: to do so, we proceed as in the proof of Lemma~\ref{LemBoundLimit}, noticing that for each $\varepsilon_k$, the ball $B(x,\varepsilon_k) \cap \Omega^c$ has positive Lebesgue measure and therefore cannot be entirely contained in V. Hence one can choose $ x_k \in (\R^n\setminus\overline{\Omega}) \setminus V$ at each step.\\
Now, we notice the following:
\begin{itemize}
    \item For all $k$, since $x_k \in (\R^n\setminus\overline{\Omega}) \setminus V$, we have $\widetilde{\mathcal{Z}u}(x_k) = 0$. This implies the sequence of values converges: $\widetilde{\mathcal{Z}u}(x_k) \xrightarrow[k \to \infty]{} 0$.

    \item By construction, $\widetilde{\mathcal{Z}u}$ is continuous on the set $\R^n \setminus G$, and we know that $x \notin G$, so we have
    \begin{equation}
    \widetilde{\mathcal{Z}u}(x_k) \xrightarrow[k \to \infty]{} \widetilde{\mathcal{Z}u}(x). 
    \end{equation}
\end{itemize}

Combining our observations, we must have $\widetilde{\mathcal{Z}u}(x) = 0$. However, our point $x$ was chosen from the set $A$, where we assumed $\widetilde{\mathcal{Z}u}$ was non-zero. This is a contradiction.

The assumption that $\widetilde{\mathcal{Z}u}$ is non-zero on a set of positive capacity on the boundary is false. Therefore, $\widetilde{\mathcal{Z}u}$ is zero q.e. on $\partial\Omega$.
Combining this with the fact that it is zero q.e. on $\R^n\setminus\overline{\Omega}$, we conclude that $\widetilde{\mathcal{Z}u} = 0$ q.e. on $\R^n\setminus\Omega$. By the characterization \eqref{H1CapH10}, this implies that $u = (\mathcal{Z}u)_{|\Omega} \in H^1_0(\Omega)$.
\end{proof}

We adopt the same notations as in~\cite{Ciarlet_2024}.
We use bold symbols for vector-valued functions and vector-valued function spaces.

 In what follows we work in the vector-valued space $\Hc^1(\Omega)$:
$\uc\in \Hc^1(\Omega)$ means that $\uc=(u_1,\ldots,u_n)$ and for all $j=1,\ldots,n$ $u_j\in H^1(\Omega)$, $i.e.$ the notation $\uc\in \Hc^1(\Omega)$ needs to be read by components. Therefore, $\Trc \uc$ is also understood by components, and we note 
$\Trc: \Hc^1(\Omega) \to \Hbc$. We consider  the spaces
\begin{align}\label{EqHdiv}
    \Hdiv = \{ \wc \in \Lc^2(\Omega))  \mid \div\wc \in L^2(\Omega) \}
\end{align}
and, for $n=3$,
\begin{align}\label{EqHcurl}
    \Hcurl = \{ \wc \in \Lc^2(\Omega)  \mid \rotv \wc \in \Lc^2(\Omega) \},
\end{align}
where divergence and curl are understood in the sense of distributions. Endowed with the norms
\begin{align*}
	&\|\wc\|_{\Hdiv} = \left(\|\wc\|_{\Lc^2(\Omega)}^2 + \|\div \wc\|_{L^2(\Omega)}^2\right)^\frac{1}{2},\\
	&\|\wc\|_{\Hcurl} = \left(\|\wc\|_{\Lc^2(\Omega)}^2 + \|\rotv \wc\|_{\Lc^2(\Omega)}^2\right)^\frac{1}{2},
\end{align*}
both of these spaces are Hilbert spaces (this property does not depend on the geometry of the boundary).
In particular, from~\cite{Ciarlet_2024} we know that on $\Hc^1(\Omega)$ the $\Hdiv$-norm and $\Hcurl$-norm are weaker than the canonical $\Hc^1(\Omega)$-norm:
\begin{align}
	\forall \wc\in \Hc^1(\Omega), \quad \|\wc\|_{\Hdiv}\le \sqrt{n} \,\|\wc\|_{\Hc^1(\Omega)},\label{IneqCdiv}\\
    \forall \wc\in \Hc^1(\Omega), \quad \|\wc\|_{\Hcurl}\le \sqrt{2} \, \|\wc\|_{\Hc^1(\Omega)},\label{IneqCcurl}
\end{align}
but not conversely.

\section{\texorpdfstring{Normal trace operator on $\Hdiv$}{Normal trace operator on H(div,Omega)}}\label{sec:normal_trace}

To define the normal trace operator in non-Lipschitz domains, we directly generalize~\cite[Theorem~2.5, p.~27]{GIRAULT-1986}, based on the integration by parts formula. It suffices to ensure that the normal trace operator has all the required properties. To begin with, we define the weak version of $\uc \cdot  \nc$ in the same way as in Definition~\ref{DefNormDerInt}.

\begin{definition}[Generalized Green's formula]\label{DefNormDotProd}
Let $\Omega$ be a bounded $H^1$-extension domain of $\R^n$, $n\ge2$. 
The weak normal dot product of $\uc\in \Hdiv$ is the linear continuous form on $\Hb$, $\phi\in\Hbm$, such that for all $v\in H^1(\Omega)$, it holds
\begin{equation}\label{EqGreenIntBis}
\langle\phi,\Tr v\rangle_{\Hbm,\,\Hb}= \int_\Omega{(\div \uc)v\dx}+\int_\Omega{\uc\cdot\nabla v\dx},
\end{equation}
and we denote by the analogy of the regular case, $\uc \cdot  \nc := \phi$.
\end{definition}

\begin{remark}
    As in Definition~\ref{DefNormDerInt}, equation~\eqref{EqGreenIntBis} defines a linear continuous form $L_n$ on $\Hb$: for all $\uc \in \Hdiv$,
    \begin{equation*}
        L_n: w \in \Hb \mapsto L_n\, w = \int_{\Omega} (\div\uc) Ew \dx + \int_{\Omega} \uc \cdot \nabla(Ew) \dx.
    \end{equation*}
    With a similar proof, we obtain
    \begin{align*}
    |L_n\,w| \leq C_E \left(\|\div \uc\|_{L^2(\Omega)} + \|\uc\|_{\Lc^2(\Omega)}\right)\|w\|_{\Hb}.
    \end{align*}
\end{remark}

We now use the generalized Green's formula to define the generalized normal trace operator. For simplicity, we will now refer to it simply as the normal trace operator.

\begin{theorem}[Normal trace]\label{ThNormalTrace}
Let $\Omega$ be a bounded $H^1$-extension domain of $\R^n$, $n\ge2$. Then the (generalized) normal trace operator, defined by
    \begin{align*}\label{EqNormalTrace}
        \trn \colon \Hdiv &\to \Hbm\\
    \uc & \mapsto \trn \uc = \uc \cdot  \nc,
    \end{align*}
    is a linear, continuous and surjective mapping,
    given by Definition~\ref{DefNormDotProd}:
    \begin{equation}\label{EqItByPn}
    \forall v \in H^1(\Omega), \quad \left\langle \uc \cdot  \nc,\Tr v\right\rangle_{\Hbm,\,\Hb}= \int_{\Omega} (\div\uc)v\dx + \int_{\Omega} \uc \cdot \nabla v \dx.
\end{equation}
\end{theorem}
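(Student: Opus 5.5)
Well-definedness, linearity and continuity come from the Remark after Definition~\ref{DefNormDotProd}, so the real content is surjectivity, which I would obtain by a Riesz-type construction in the spirit of Theorem~\ref{ThTrisom}.

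\emph{Well-definedness, linearity, continuity.} For $\uc\in\Hdiv$ I define $L_n w=\int_\Omega(\div\uc)\,v_w\dx+\int_\Omega\uc\cdot\nabla v_w\dx$, where $v_w\in V_1(\Omega)$ is the $1$-harmonic lift of $w$. Independence of the lift is checked as in the proof after Definition~\ref{DefNormDerInt}. If $v,v'\in H^1(\Omega)$ satisfy $\Tr v=\Tr v'$, then $\psi=v-v'\in H^1_0(\Omega)$ by Theorem~\ref{ThTrisom}, item~\ref{Trisom2}. For $\phi\in\D(\Omega)$, the definition of the distributional divergence gives $\int_\Omega(\div\uc)\phi+\int_\Omega\uc\cdot\nabla\phi=0$. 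This identity extends to $\psi$ by density of $\D(\Omega)$ in $H^1_0(\Omega)$, since the left side is $H^1$-continuous. Hence \eqref{EqItByPn} holds for every $v\in H^1(\Omega)$.

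Linearity in $\uc$ is immediate. Using the isometry \eqref{EqBnorm}, Cauchy--Schwarz gives
\begin{equation*}
|L_n w|\le \|\uc\|_{\Hdiv}\,\|v_w\|_{H^1(\Omega)}=\|\uc\|_{\Hdiv}\,\|w\|_{\Hb},
\end{equation*}
so $\|\trn\uc\|_{\Hbm}\le\|\uc\|_{\Hdiv}$.

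\emph{Surjectivity (the main step).} Let $\phi\in\Hbm$. By the Riesz theorem in the Hilbert space $\Hb$, there is $g\in\Hb$ with $\langle\phi,w\rangle=(g,w)_{\Hb}$. By \eqref{DefHbInnerProd} this equals $(w_g,w_w)_{H^1(\Omega)}$, where $w_g\in V_1(\Omega)$ is the $1$-harmonic lift of $g$.

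I set $\uc:=\nabla w_g\in\Lc^2(\Omega)$. Since $(-\Delta+1)w_g=0$ in $\D'(\Omega)$, we have $\div\uc=\Delta w_g=w_g\in L^2(\Omega)$, so $\uc\in\Hdiv$. For any $v\in H^1(\Omega)$, write $v=v_0+w_{\Tr v}$ with $v_0\in H^1_0(\Omega)$, which is possible by \eqref{EqV1andOthers}. Then
\begin{equation*}
\int_\Omega(\div\uc)v\dx+\int_\Omega\uc\cdot\nabla v\dx=\int_\Omega w_g v\dx+\int_\Omega\nabla w_g\cdot\nabla v\dx=(w_g,v)_{H^1(\Omega)}.
\end{equation*}
This equals $(w_g,w_{\Tr v})_{H^1(\Omega)}=\langle\phi,\Tr v\rangle$, because $w_g\perp H^1_0(\Omega)$. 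Hence $\trn\uc=\phi$, and surjectivity follows. This construction also shows $\|\uc\|_{\Hdiv}=\|w_g\|_{H^1}=\|\phi\|_{\Hbm}$, so $\trn$ has a norm-preserving right inverse.

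The only delicate point is identifying $\Delta w_g$ with $w_g$ distributionally. That identification is exactly the definition of $V_1(\Omega)$, so no boundary regularity is needed anywhere.
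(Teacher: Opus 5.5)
Your proof is correct and follows essentially the same route as the paper. For continuity you use Cauchy--Schwarz together with the minimal-lift trace norm \eqref{normTri}, which your $1$-harmonic lift realizes; for surjectivity you use the Riesz theorem to produce a $1$-harmonic function whose gradient has the prescribed normal trace. The paper phrases the latter as solving the weak Neumann problem $-\Delta\phi+\phi=0$, $\partial\phi/\partial\nc=\mu$, whose solution is exactly your $w_g$. Your explicit checks that $\div\nabla w_g=w_g\in L^2(\Omega)$ and that $\|\nabla w_g\|_{\Hdiv}=\|\phi\|_{\Hbm}$ (a norm-preserving right inverse of $\trn$) are details the paper leaves implicit.
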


\begin{proof}
   Let us first prove the continuity.
    
    Let $\uc \in \Hdiv$ and $v \in H^1(\Omega)$. By definition of the normal trace, we have
    \begin{align}
        \left\langle \uc \cdot  \nc,\Tr v \right\rangle_{\Hbm,\,\Hb} = \int_{\Omega} (\div \uc) v \dx + \int_{\Omega} \uc \cdot (\nabla v)\dx .
    \end{align}
    Therefore, using the Cauchy-Schwarz inequality in $\mathbb{R}^2$, we find
    \begin{align*}
        |\left\langle \uc \cdot  \nc,\Tr v \right\rangle_{\Hbm,\,\Hb}| &\leq \|\div \uc\|_{L^2(\Omega)} \|v\|_{L^2(\Omega)} + \|\uc\|_{\Lc^2(\Omega)} \|\nabla v\|_{\Lc^2(\Omega)}  \\
        &\leq \|\uc\|_{\Hdiv} \|v\|_{H^1(\Omega)}. 
    \end{align*}
    We then have for all $v\in H^1(\Omega)$ such that $w = \Tr v$:
    \begin{equation*}
        |\left\langle \uc \cdot  \nc,w \right\rangle_{\Hbm,\,\Hb}| \leq \|\uc\|_{\Hdiv} \|v\|_{H^1(\Omega)}.
    \end{equation*}
    Hence, the previous inequality implies by~\eqref{normTri} that
    \begin{align*}
        \forall \uc \in \Hdiv,\, \forall w \in \Hb, \quad |\left\langle \uc \cdot  \nc,w \right\rangle_{\Hbm,\,\Hb}| \leq \|\uc\|_{\Hdiv} \|w\|_{\Hb}.   
    \end{align*}
    Therefore,  the continuity of $\trn$ follows: 
    \begin{equation}
    \|\trn \uc\|_{\Hbm} \leq \|\uc\|_{\Hdiv}.
    \end{equation}

Next, we prove the surjectivity. Let $\mu$ $\in$ $\Hbm$. We want to find $\uc \in \Hdiv$ such that
    \begin{equation*}
        \uc \cdot  \nc = \mu\in \Hbm.
    \end{equation*}
    Consider the following problem:
    $$\begin{cases}\label{EqDp1}
    \text{Find $\phi \in H^1(\Omega)$ such that}\\
    -\Delta \phi + \phi = 0 & \text{ weakly on }  \Omega,\\
    \frac{\partial \phi}{\partial \nc} = \mu & \text{ in } \Hbm.
    \end{cases}$$
By the Riesz representation theorem, the above problem admits exactly one solution $\phi \in H^1(\Omega)$. Let $\uc = \nabla \phi$, then $\uc \in \Hdiv$ and $\uc \cdot  \nc = \mu$ belongs to $\Hbm$.
\end{proof}

In what follows, we will sometimes need to consider the restriction of the normal trace to a subset of the boundary $\partial\Omega$. Since the normal trace is defined as a functional rather than as a pointwise function, its restriction must also be understood in a functional-analytic sense.

\begin{definition}[Restriction of the Normal Trace]\label{DefResNormalTrace}
Let $\Omega$ be a bounded $H^1$-extension domain of $\R^n$, $n \ge 2$. Let $\uc \in \Hdiv$ and let $\Gamma$ be a relatively open subset of the boundary $\partial\Omega$.\\
We define the space of trace functions supported in $\Gamma$ as the subspace
\begin{equation}\label{EqDefBGamma}
    \B_\Gamma(\partial\Omega) := \{ \phi \in \B(\partial\Omega) \mid \operatorname{supp}(\phi) \subset \Gamma \}.
\end{equation}
The restriction of the normal trace of $\uc$ to $\Gamma$, denoted $(\trn\uc)|_\Gamma$, is the functional belonging to the dual space $\B'_\Gamma(\partial\Omega)$ whose action is defined by
\begin{equation}\label{EqActionResTrace}
    \forall \phi \in \B_\Gamma(\partial\Omega), \quad \left\langle (\trn\uc)|_\Gamma, \phi \right\rangle_{\B'_\Gamma(\partial\Omega), \B_\Gamma(\partial\Omega)} := \left\langle \trn \uc, \phi \right\rangle_{\Hbm,\, \Hb}.
\end{equation}
Equivalently, for $\phi = \Tr v \in \B_\Gamma(\partial\Omega)$, where $v \in H^1(\Omega)$ is such that $\operatorname{supp}(\Tr v) \subset \Gamma$, the action is given via the generalized Green's identity:
\begin{equation}\label{EqActionResTraceInt}
    \left\langle (\trn\uc)|_\Gamma, \phi \right\rangle_{\B'_\Gamma, \B_\Gamma} = \int_{\Omega} (\div\uc)v \dx + \int_{\Omega} \uc \cdot \nabla v \dx.
\end{equation}
\end{definition}

\begin{remark}
Since the trace functions $\phi \in \mathcal{B}(\partial\Omega)$ are defined as quasi-everywhere equivalence classes, the appropriate notion of support is the quasi-support, denoted $\operatorname{q-supp}(\phi)$. However, the theory of potential analysis establishes a close relationship between the quasi-support and the classical topological support, $\operatorname{supp}(\phi)$. Specifically, the quasi-support is always a subset of the topological support, and they differ at most by a set of zero capacity (see, e.g., \cite[Section 4.6]{Fukushima_2011}). For the purpose of defining subspaces of test functions for our duality pairings, using the simpler and more intuitive topological support is sufficient and does not lead to any ambiguity in the subsequent proofs.
\end{remark}

The definition of the normal trace operator clearly depends on the domain $\Omega$ on which it is defined. While working within a single, fixed domain, we will use the default notation $\trn$. However, in proofs involving multiple domains or subdomains (e.g., $\Omega$, $\Omega_0$, $\Omega'$), it will be crucial to distinguish between them. To emphasize this dependence without making the notation too cumbersome, we will subscript the normal vector symbol to indicate the associated domain. For example, the normal trace operator for a domain $\Omega_0$ will be denoted by $\operatorname{Tr}_{\nc_0}$, and the operator for a domain $\Omega'$ will be denoted by $\operatorname{Tr}_{\nc'}$.

Using the definition of the normal trace, we can now establish a fundamental property for vector fields defined on nested domains. As a matter of fact, in many applications, a global field is constructed by gluing together separate fields defined on such domains. For the resulting field to belong globally to $\Hc(\operatorname{div})$, its distributional divergence must not contain any singularities on the common boundary. The following proposition formalizes this idea, showing that this is equivalent to a "zero jump" condition on the normal traces, which acts as a transmission condition across the interface. It is worth noting that an alternative approach exists, for instance in~\cite{CLARET_2026}, where the sign change reflecting the opposite orientation of the normal vectors is introduced directly into a separate definition for the exterior normal derivative. Our approach differs: we rely solely on a single, unified definition of the normal trace for any domain. As a consequence, the resulting opposite-sign relationship on an interface becomes a derived property rather than a definitional convention.

\begin{proposition}\label{PropOppNormTr}
Let $\Omega$ and $\Omega'$ be two bounded $H^1$-extension domains of $\R^n$ such that $\overline{\Omega} \subset \Omega'$. Define $\Omega_0 := \Omega' \setminus \overline{\Omega}$ and assume that $\Omega_0$ is also an $H^1$-extension domain (see Figure~\ref{fig:domain_decomp} for an illustration).\\
Let $\uc \in \Hdiv$ and $\uc_0 \in \Hc(\div,\Omega_0)$. We define a piecewise vector field $\uc' \in \Lc^2(\Omega')$ by
$$ \uc'(x) := \begin{cases} \uc(x) & \text{if } x \in \Omega \\ \uc_0(x) & \text{if } x \in \Omega_0 \end{cases}, $$
and we denote by $(\trno\uc_0)_{|\partial\Omega}$ the restriction of the normal trace functional from $\Omega_0$ to the interface $\partial\Omega$ as given in Definition~\ref{DefResNormalTrace}. 
Then, the field $\uc'$ belongs to the global space $\Hc(\div,\Omega')$ if and only if the sum of the normal traces on the interface $\bord$ is zero, i.e.,
\begin{equation}\label{EqJumpCondition}
    \trn\uc + (\trno\uc_0)_{|\bord} = 0 \quad \text{in } \B'(\bord).
\end{equation}
\end{proposition}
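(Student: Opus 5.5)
The plan is to compute the distributional divergence of $\uc'$ on $\Omega'$ directly and compare it with the piecewise divergence. For $\varphi\in\D(\Omega')$, the restrictions satisfy $\varphi|_\Omega\in H^1(\Omega)$ and $\varphi|_{\Omega_0}\in H^1(\Omega_0)$. We define the candidate $g\in L^2(\Omega')$ by $g=\div\uc$ on $\Omega$ and $g=\div\uc_0$ on $\Omega_0$. Since $\lambda^{(n)}(\bord)=0$ (\cite[Lemma~9]{HAJLASZ-2008-1}), $g$ is well defined a.e. Applying \eqref{EqItByPn} on $\Omega$ and on $\Omega_0$ and adding the two identities gives
\begin{equation*}
\langle \div\uc',\varphi\rangle_{\D'(\Omega'),\D(\Omega')} + \int_{\Omega'} g\varphi\dx = -\int_{\Omega'}\uc'\cdot\nabla\varphi\dx+\int_{\Omega'} g\varphi\dx = \langle\trn\uc,\Tr\varphi\rangle + \langle\trno\uc_0,\operatorname{Tr}_{\Omega_0}\varphi\rangle .
\end{equation*}
Here $\operatorname{Tr}_{\Omega_0}\varphi$ denotes the trace on $\partial\Omega_0=\bord\cup\partial\Omega'$.

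The key observation is that $\varphi$ has compact support in $\Omega'$. Hence its trace on $\partial\Omega_0$ vanishes near $\partial\Omega'$ and is supported in $\bord$. Indeed, since $\varphi$ is continuous, both traces coincide pointwise with $\varphi|_{\bord}$ q.e. Thus the second pairing is exactly $\langle(\trno\uc_0)|_{\bord},\varphi|_{\bord}\rangle$ in the sense of Definition~\ref{DefResNormalTrace}, using that $\bord$ is relatively open in $\partial\Omega_0$ (it is a positive distance from $\partial\Omega'$). The identity therefore becomes
\begin{equation*}
\div\uc' - g = \Lambda \ \text{ in } \D'(\Omega'),\qquad \langle\Lambda,\varphi\rangle := \langle \trn\uc + (\trno\uc_0)|_{\bord},\varphi|_{\bord}\rangle .
\end{equation*}

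For the implication ``$\Leftarrow$'', if \eqref{EqJumpCondition} holds then $\Lambda=0$. So $\div\uc'=g\in L^2(\Omega')$, and $\uc'\in\Hc(\div,\Omega')$.

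For the implication ``$\Rightarrow$'', suppose $\div\uc'\in L^2(\Omega')$. Its restrictions to $\Omega$ and $\Omega_0$ are $\div\uc$ and $\div\uc_0$, so $\div\uc'=g$ a.e. and $\Lambda$ vanishes on all of $\D(\Omega')$. It then remains to pass from test functions $\varphi|_{\bord}$, $\varphi\in\D(\Omega')$, to all of $\Hb$. By continuity of $\Lambda$ (both traces are bounded functionals), the identity extends to every $w\in H^1_0(\Omega')$. Given $f\in\Hb$, take $v\in H^1(\Omega)$ with $\Tr v=f$ and extend it with $E$ to $H^1(\R^n)$. Then multiply by a cutoff $\chi\in\D(\Omega')$ equal to $1$ near $\overline\Omega$. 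This yields $w\in H^1_0(\Omega')$ whose trace from $\Omega$ is $f$ and whose trace from $\Omega_0$ is $f$ on $\bord$ and $0$ on $\partial\Omega'$, by the independence of the trace on the chosen extension (\cite[Corollary 6.3]{BIEGERT-2009}). Hence $\langle\trn\uc+(\trno\uc_0)|_{\bord},f\rangle=0$ for all $f$, which is \eqref{EqJumpCondition}.

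I expect the main obstacle to be this density and identification step: making rigorous that the two traces of $w$, taken from different sides, agree q.e. on $\bord$, and that the restricted functional in Definition~\ref{DefResNormalTrace} acts on such $f$. I would handle it through the quasi-continuous representative $\tilde w$ of $w\in H^1(\R^n)$, since both $\Tr$ and $\operatorname{Tr}_{\Omega_0}$ equal $\tilde w|$ on the respective boundaries.
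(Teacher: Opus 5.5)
Your proposal is correct and follows essentially the paper's route: you split $\langle\div\uc',\varphi\rangle$ over $\Omega$ and $\Omega_0$, apply \eqref{EqItByPn} on each piece, identify the $\Omega_0$-trace of a test function with its restriction to $\bord$, and remove the interface term by a cutoff-times-extension argument (you extend $\Lambda$ to $H^1_0(\Omega')$ and test on $\chi Ev$, while the paper proves density of $\{\Tr\phi : \phi\in\D(\Omega')\}$ in $\Hb$ --- the same idea); your explicit justification of $\div\uc'=g$ a.e.\ via $\lambda^{(n)}(\bord)=0$ in the ``$\Rightarrow$'' direction is in fact more careful than the paper's. Fix the sign slip in your first display: summing \eqref{EqItByPn} gives $\int_{\Omega'}g\varphi\dx-\langle\div\uc',\varphi\rangle$ on the left, so the identity is $\div\uc'-g=-\Lambda$, which does not affect the equivalence.
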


\begin{figure}[h]
\centering
\includegraphics[width=0.4\textwidth]{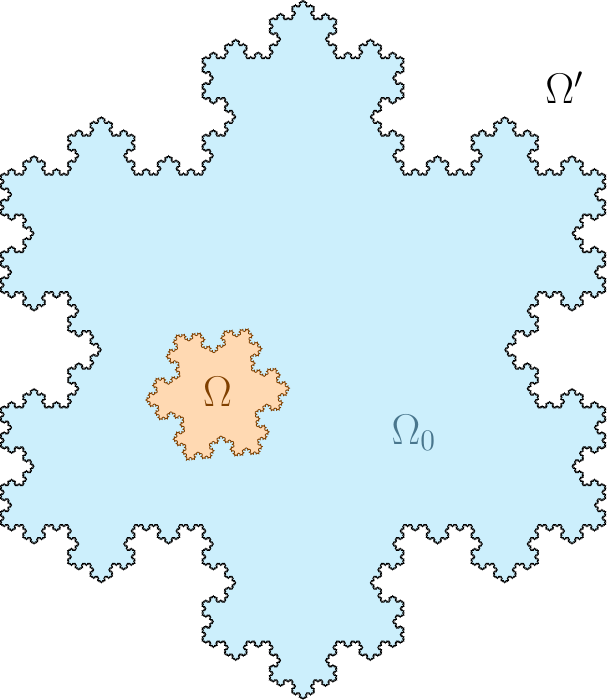}
\caption{Domain partition for an inclusion problem.}\label{fig:domain_decomp}
\end{figure}

\begin{proof}
By construction, since $\uc \in \Lc^2(\Omega)$ and $\uc_0 \in \Lc^2(\Omega_0)$, the piecewise field $\uc'$ belongs to $\Lc^2(\Omega')$. For $\uc'$ to be in $\Hc(\div,\Omega')$, we must show that its distributional divergence $\div \uc'$ is an element in $L^2(\Omega')$.\\
Let $\phi \in \D(\Omega')$ be an arbitrary test function. The distributional divergence is defined by its action on $\phi$:
$$ \langle \div \uc', \phi \rangle = - \int_{\Omega'} \uc' \cdot \nabla\phi \dx. $$
We split the integral over the subdomains $\Omega$ and $\Omega_0$:
$$ \langle \div \uc', \phi \rangle = - \int_{\Omega} \uc \cdot \nabla\phi \dx - \int_{\Omega_0} \uc_0 \cdot \nabla\phi \dx.$$
For each integral, we apply the definition of the respective normal trace operator (Theorem~\ref{ThNormalTrace}), which is possible since by hypothesis $\uc \in \Hdiv$ and $\uc_0 \in \Hc(\div,\Omega_0)$:
\begin{align*}
    - \int_{\Omega} \uc \cdot \nabla\phi \dx &= \int_{\Omega} (\div \uc) \phi \dx - \langle \trn\uc, \operatorname{Tr}_{\bord}(\phi) \rangle_{\Hbm,\, \Hb} \\
    - \int_{\Omega_0} \uc_0 \cdot \nabla\phi \dx &= \int_{\Omega_0} (\div \uc_0) \phi \dx - \langle \trno\uc_0, \operatorname{Tr}_{\partial\Omega_0}(\phi) \rangle_{\B'(\partial\Omega_0),\, \B(\partial\Omega_0)}
\end{align*}
Summing these two identities, we obtain the action of the distributional divergence:
\begin{multline}
    \langle \div \uc', \phi \rangle = \int_{\Omega'} (\divpw \uc') \phi \dx \\ - \langle \trn\uc, \operatorname{Tr}_{\bord}(\phi) \rangle_{\Hbm,\, \Hb} - \langle \trno\uc_0, \operatorname{Tr}_{\partial\Omega_0}(\phi) \rangle_{\B'(\partial\Omega_0),\,\B(\partial\Omega_0)},
\end{multline}
where $\divpw \uc'$ is the piecewise-defined function equal to $\div\uc$ in $\Omega$ and $\div\uc_0$ in $\Omega_0$. Then, $\divpw \uc' \in L^2(\Omega')$.\\
Since the test function $\phi \in \D(\Omega')$ has compact support in the open set $\Omega'$, its trace is zero on the outer boundary $\partial\Omega'$. As $\partial\Omega_0 = \bord \cup \partial\Omega'$, the trace $\operatorname{Tr}_{\partial\Omega_0}(\phi)$ is supported on $\bord$, hence belongs to $\B_{\bord}(\partial\Omega_0)$. We claim that $\Hb = \B_{\bord}(\partial\Omega_0)$ with equivalent norms. Indeed, by the intrinsic characterization of the trace (see the paragraph after Definition~\ref{DefTraceInt}), for any $v \in H^1(\Omega)$ or $v_0 \in H^1(\Omega_0)$ the traces $\operatorname{Tr}_{\bord} v$ and $\operatorname{Tr}_{\partial\Omega_0} v_0$ are both computed as the restriction to the relevant boundary of a global $H^1(\R^n)$-extension, independently of the domain. Because $\bord$ and $\partial\Omega'$ are strictly disjoint, the two trace spaces consist of the exact same elements on $\bord$. The identity map between them is continuous in both directions (by composition of the respective bounded extension and trace operators), which yields equivalent norms. Consequently, their dual spaces coincide, $\Hbm = \B'_{\bord}(\partial\Omega_0)$, and Definition~\ref{DefResNormalTrace} directly gives
$$ \langle \trno\uc_0, \operatorname{Tr}_{\partial\Omega_0}(\phi) \rangle_{\B'(\partial\Omega_0),\,\B(\partial\Omega_0)} = \langle (\trno\uc_0)_{|\bord}, \operatorname{Tr}_{\bord}(\phi) \rangle_{\Hbm,\, \Hb}. $$
The expression for the distributional divergence thus becomes
\begin{equation}
    \langle \div \uc', \phi \rangle = \int_{\Omega'} (\divpw \uc') \phi \dx - \langle \trn\uc + (\trno\uc_0)_{|\bord}, \operatorname{Tr}_{\bord}(\phi) \rangle_{\Hbm,\, \Hb}.
\end{equation}
This shows that the distribution $\div \uc'$ is the sum of a regular part (the $L^2$ element $\divpw \uc'$) and a singular part concentrated on the interface $\bord$.\\
For $\div \uc'$ to be in $L^2(\Omega')$, this singular part must be zero. This requires the functional $F := \trn\uc + (\trno\uc_0)_{|\bord}$ to be zero. The identity above shows that $F$ needs to vanish when tested against the set of traces 
$$T := \{ \operatorname{Tr}_{\bord}(\phi) \mid \phi \in \D(\Omega') \}.$$
Since $\Omega$ is an $H^1$-extension domain, this set of traces is dense in $\B(\bord)$.
To see why, let $g \in \B(\bord)$ be an arbitrary trace. By definition of the trace space, there exists a function $v \in H^1(\Omega)$ such that $\operatorname{Tr}_{\bord}(v) = g$. Since $\Omega$ is an $H^1$-extension domain, there exists a global extension $\tilde{v} \in H^1(\R^n)$ such that $\tilde{v}_{|\Omega} = v$. Furthermore, because $\overline{\Omega}$ is a compact subset of the open set $\Omega'$, the distance between $\overline{\Omega}$ and $\partial\Omega'$ is strictly positive. Consequently, there exists a smooth cut-off function $\eta \in \D(\Omega')$ such that $\eta = 1$ on an open neighborhood of $\overline{\Omega}$. 

The space $\D(\R^n)$ is dense in $H^1(\R^n)$, meaning there exists a sequence $(\psi_k)_{k\in\N} \subset \D(\R^n)$ such that $\psi_k \to \tilde{v}$ in the $H^1(\R^n)$ norm. We now define the sequence of truncated functions
$$ \phi_k := \eta \psi_k. $$
Because $\eta \in \D(\Omega')$, each $\phi_k$ is a smooth function with compact support strictly contained within $\Omega'$, so $\phi_k \in \D(\Omega')$. Moreover, since $\eta = 1$ on $\overline{\Omega}$, the restriction of $\phi_k$ to $\Omega$ is simply $\psi_k|_\Omega$. Thus, we have
$$ \|\phi_k - v\|_{H^1(\Omega)} = \|\psi_k - v\|_{H^1(\Omega)} \leq \|\psi_k - \tilde{v}\|_{H^1(\R^n)} \xrightarrow[k \to \infty]{} 0. $$
By continuity of the trace operator $\operatorname{Tr}_{\bord} : H^1(\Omega) \to \B(\bord)$, this implies that
$$ \operatorname{Tr}_{\bord}(\phi_k) \to \operatorname{Tr}_{\bord}(v) = g \quad \text{in } \B(\bord). $$
This proves that the set $T$ is indeed dense in $\B(\bord)$.\\
Therefore, $\uc' \in \Hc(\div,\Omega')$ if and only if $F = 0$, which is precisely the condition
$$ \trn\uc + (\trno\uc_0)_{|\bord} = 0 \quad \text{in } \B'(\bord). $$
\end{proof}

Let us present now density results using the new definition of the normal trace. These results are analogous to those established in the Lipschitz setting, see for example~\cite{GIRAULT-1986}. In the regular case, density is mainly used as a tool to define the operators. Here, the situation is different: since the normal trace is defined directly on $\Hdiv$, the density results arise as a consequence of this definition. We show a first density result.
\begin{proposition}\label{DensHdiv}
Let $\Omega$ be a bounded two-sided $H^1$-extension domain of $\R^n$, $n\ge2$. The space $(\D(\overline\Omega))^n$ is dense in $\Hdiv$:
\begin{equation}
    \forall \wc \in \Hdiv, \exists (\wc_k)_k \in \left((\D(\overline\Omega))^n\right)^{\mathbb{N}},\quad \lim_{k \to \infty} \|\wc - \wc_k\|_{\Hdiv} = 0.
\end{equation}
\end{proposition}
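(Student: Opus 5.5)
We follow the classical Hilbert-space duality argument of Girault--Raviart: $(\D(\overline\Omega))^n$ is dense in $\Hdiv$ as soon as every continuous linear functional on $\Hdiv$ vanishing on $(\D(\overline\Omega))^n$ vanishes identically. By the Riesz representation theorem, such a functional has the form
$$\ell(\wc)=\int_\Omega \fc\cdot\wc\dx+\int_\Omega g\,\div\wc\dx,\qquad \fc\in\Lc^2(\Omega),\ g\in L^2(\Omega).$$
So we assume $\ell(\wc)=0$ for all $\wc\in(\D(\overline\Omega))^n$ and aim to show $\ell=0$ on $\Hdiv$.

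First, take test fields $\wc\in(\D(\R^n))^n$ and extend $\fc$ and $g$ by zero to $\R^n$, writing $\mathcal{Z}\fc$ and $\mathcal{Z}g$. The vanishing of $\ell$ gives $\int_{\R^n}(\mathcal{Z}\fc\cdot\wc+\mathcal{Z}g\,\div\wc)\dx=0$ for all $\wc\in(\D(\R^n))^n$. Hence $\nabla(\mathcal{Z}g)=\mathcal{Z}\fc$ in $\D'(\R^n)$, so $\mathcal{Z}g\in H^1(\R^n)$. In particular $g\in H^1(\Omega)$ with $\nabla g=\fc$ in $\Omega$, and the zero extension of $g$ lies in $H^1(\R^n)$.

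Next, invoke Proposition~\ref{PropExtH10}. This is exactly where the two-sided $H^1$-extension hypothesis is used, and it is the key step. It yields $g\in H^1_0(\Omega)$, so there exist $g_k\in\D(\Omega)$ with $g_k\to g$ in $H^1(\Omega)$.

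Finally, for arbitrary $\wc\in\Hdiv$ we have
$$\ell(\wc)=\int_\Omega(\nabla g\cdot\wc+g\,\div\wc)\dx=\lim_{k\to\infty}\int_\Omega(\nabla g_k\cdot\wc+g_k\,\div\wc)\dx=0,$$
since for each $g_k\in\D(\Omega)$ the integral vanishes by the definition of the distributional divergence. Equivalently, by Theorem~\ref{ThNormalTrace}, $\ell(\wc)=\langle\wc\cdot\nc,\Tr g\rangle_{\Hbm,\Hb}$ with $\Tr g=0$ by Theorem~\ref{ThTrisom}, item~\ref{Trisom2}. Either way $\ell=0$, and the density follows from the Hahn--Banach theorem.

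The main obstacle is the step $\mathcal{Z}g\in H^1(\R^n)\Rightarrow g\in H^1_0(\Omega)$. For Lipschitz domains this is classical, but here it requires the capacity argument of Proposition~\ref{PropExtH10}; everything else is routine.
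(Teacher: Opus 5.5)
Your proof is correct and follows essentially the same route as the paper. The paper also represents an annihilating functional via Riesz, identifies $\nabla(\mathcal{Z}g)=\mathcal{Z}\fc$ in $\D'(\R^n)$, invokes Proposition~\ref{PropExtH10} to get $g\in H^1_0(\Omega)$, and concludes that the functional vanishes on $\Hdiv$. The only cosmetic difference is the last step: you approximate $g$ by elements of $\D(\Omega)$, while the paper uses the Green formula \eqref{EqItByPn} with $\Tr g=0$; you mention both, and they are equivalent.
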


\begin{proof}
We use the same approach as Girault and Raviart in~\cite[Theorem 2.4, p.~27]{GIRAULT-1986} based on the following property in Banach spaces:
    a subspace $\mathcal{V}$ of a Banach space $M$  is dense in $M$ if and only if every element of $M'$  that vanishes on $\mathcal{V}$ also vanishes on $M$.
Let $l \in (\Hdiv)'$. By the Riesz representation theorem, $l$ has an associated element $ \lc$ of $\Hdiv$ such that for all $\uc \in \Hdiv$,
\begin{equation}
    \left\langle l,\uc\right\rangle = ( \lc,\uc)_{\Lc^2(\Omega)} + (\div  \lc,\div \uc)_{L^2(\Omega)} = \sum_{i=1}^{n} (l_i,u_i)_{L^2(\Omega)} + (l_{n+1},\div \uc)_{L^2(\Omega)},
\end{equation}
where $l_{n+1} =  \div  \lc$.

Now, suppose that $l$ vanishes on $(\D(\overline\Omega))^n$ and let $\mathcal{Z}$ denote the zero-extension operator. This implies that
\begin{equation*}
    \forall \phic \in (\D(\mathbb{R}^n))^n,\quad (\mathcal{Z}{ \lc},\phic)_{\Lc^2(\Omega)} + (\mathcal{Z}{l}_{n+1},\div \boldsymbol{\phic})_{L^2(\Omega)} = 0.
\end{equation*}
We differentiate in $\D'(\mathbb{R}^n)$ to obtain
\begin{equation*}
     \forall \phic \in (\D(\mathbb{R}^n))^n,\quad(\mathcal{Z}{ \lc},\phic)_{\Lc^2(\Omega)} = - (\mathcal{Z}{l}_{n+1},\div\boldsymbol{\phic})_{L^2(\Omega)} = \langle \nabla \mathcal{Z}{l}_{n+1},\phic \rangle.
\end{equation*}
Then, in the sense of distributions on $\mathbb{R}^n$, we have $\mathcal{Z}{ \lc} =\nabla \mathcal{Z}{l}_{n+1}$, with $\mathcal{Z}{ \lc} \in (L^2(\mathbb{R}^n))^n $. Therefore, $\mathcal{Z}{l}_{n+1} \in H^1(\mathbb{R}^n)$. It follows by Proposition~\ref{PropExtH10} that $l_{n+1} \in H^1_0(\Omega)$. Then, for all $\uc \in \Hdiv$, by Green's formula we have
\begin{align*}
    \left\langle l,\uc\right\rangle &= ( \lc,\uc)_{\Lc^2(\Omega)} + ( \div  \lc,\div \uc)_{L^2(\Omega)} = (\nabla {l}_{n+1}, \uc)_{\Lc^2(\Omega)} + ({l}_{n+1},\div \uc)_{L^2(\Omega)} \\
    &= \langle \Trn \uc , \Tr l_{n+1} \rangle_{\Hbm,\,\Hb} = 0
\end{align*}
since $l_{n+1} \in H^1_0(\Omega)$.
Thus, $l = 0$ on $(\D(\overline\Omega))^n$ implies $l=0$ on $\Hdiv$, meaning that $(\D(\overline\Omega))^n$ is dense in $\Hdiv$.
\end{proof}

This second density result gives a characterization of the kernel of the normal trace operator. The proof follows the same strategy as in~\cite[Theorem~2.6, p.~29]{GIRAULT-1986}. The main difference with Proposition~\ref{DensHdiv} is that here we obtain directly an element of $H^1(\Omega)$, so there is no need to use the zero-extension and Proposition~\ref{PropExtH10}. Consequently, for this result we do not need to assume that $\Omega$ is a two-sided $H^1$-extension domain.

\begin{definition}
Let $\Omega$ be an open subset of $\R^n$, $n\ge2$. The Hilbert space $\Hc_0(\div,\Omega)$ is defined as the closure of $(\D(\Omega))^n$ in $\Hdiv$:
    \begin{align}
        \forall \wc \in \Hc_0(\div,\Omega), \exists(\wc_k)_k \in ((\D(\Omega))^n)^{\mathbb{N}}, \lim\limits_{k \rightarrow +\infty} \|\wc-\wc_k\|_{\Hdiv} = 0.
    \end{align}
\end{definition}

\begin{proposition}\label{propHdiv}
Let $\Omega$ be a bounded $H^1$-extension domain of $\R^n$, $n\ge2$. The following equality holds:
\begin{equation}\label{DensityDiv}
 \Ker(\Trn)=\Hc_0(\div,\Omega). 
\end{equation}
\end{proposition}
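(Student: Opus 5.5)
We prove the two inclusions separately, following the strategy of~\cite[Theorem~2.6]{GIRAULT-1986}.

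\textbf{Step 1: $\Hc_0(\div,\Omega)\subset\Ker(\Trn)$.} Let $\phic\in(\D(\Omega))^n$ and $v\in H^1(\Omega)$. Since $\phic$ has compact support in $\Omega$, the definition of the distributional divergence, extended by density of $\D(\overline\Omega)$ in $H^1(\Omega)$ (or simply by integrating by parts on $\R^n$ after extending $\phic$ by zero), gives
\begin{equation*}
\int_\Omega (\div\phic)v\dx+\int_\Omega\phic\cdot\nabla v\dx=0 .
\end{equation*}
By~\eqref{EqItByPn}, $\langle\Trn\phic,\Tr v\rangle=0$ for every $v$. Since $\Tr$ is surjective onto $\Hb$, this means $\Trn\phic=0$. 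Theorem~\ref{ThNormalTrace} shows that $\Trn$ is continuous from $\Hdiv$ to $\Hbm$, so its kernel is closed. Hence it contains the closure of $(\D(\Omega))^n$, which is $\Hc_0(\div,\Omega)$.

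\textbf{Step 2: $\Ker(\Trn)\subset\Hc_0(\div,\Omega)$.} We use the Hahn--Banach criterion inside the Hilbert space $\Ker(\Trn)$. It suffices to show that any $l\in(\Hdiv)'$ vanishing on $(\D(\Omega))^n$ also vanishes on $\Ker(\Trn)$. As in Proposition~\ref{DensHdiv}, we represent $l$ by $\lc\in\Hdiv$ through
\begin{equation*}
\langle l,\uc\rangle=(\lc,\uc)_{\Lc^2(\Omega)}+(l_{n+1},\div\uc)_{L^2(\Omega)},\qquad l_{n+1}=\div\lc .
\end{equation*}
Testing with $\phic\in(\D(\Omega))^n$ gives $\lc=\nabla l_{n+1}$ in $\D'(\Omega)$. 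Since $\lc\in\Lc^2(\Omega)$, we get $l_{n+1}\in H^1(\Omega)$ directly, with no zero extension needed. This is exactly why we neither need Proposition~\ref{PropExtH10} nor the two-sided assumption. Then, for $\uc\in\Ker(\Trn)$, Green's formula~\eqref{EqItByPn} with $v=l_{n+1}$ yields
\begin{equation*}
\langle l,\uc\rangle=(\nabla l_{n+1},\uc)_{\Lc^2(\Omega)}+(l_{n+1},\div\uc)_{L^2(\Omega)}=\langle\Trn\uc,\Tr l_{n+1}\rangle_{\Hbm,\Hb}=0 .
\end{equation*}
Therefore $(\D(\Omega))^n$ is dense in $\Ker(\Trn)$, so $\Ker(\Trn)\subset\Hc_0(\div,\Omega)$.

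The only delicate point is the justification in Step 1 that compactly supported fields have zero generalized normal trace. This is immediate here because the integrand identity holds for all $v\in H^1(\Omega)$: take a cutoff $\chi\in\D(\Omega)$ equal to $1$ on $\operatorname{supp}\phic$, so that $\chi v\in H^1_0(\Omega)$ can be approximated by elements of $\D(\Omega)$ without changing the integrals. The remaining steps are routine applications of the Riesz representation theorem and of the continuity of $\Trn$.
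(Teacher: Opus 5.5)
Your proof is correct and follows essentially the same route as the paper: the Girault--Raviart duality argument, in which a functional vanishing on $(\D(\Omega))^n$ is represented by $\lc=\nabla l_{n+1}$ with $l_{n+1}\in H^1(\Omega)$, and is then shown to vanish on $\Ker(\Trn)$ by Green's formula~\eqref{EqItByPn}. Your Step~1 makes explicit the inclusion $(\D(\Omega))^n\subset\Ker(\Trn)$ and the closedness of the kernel, which the paper only uses tacitly; representing $l$ in $(\Hdiv)'$ instead of $\Ker(\Trn)'$ is an inessential variation.
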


\begin{proof}

We first establish that $(\D(\Omega))^n$ is dense in $\Ker(\Trn)$ in a similar way as in Proposition~\ref{DensHdiv}. 
Let $l \in\Ker(\Trn)'$. Once again, $l$ has an associated element $ \lc$ of $\Ker(\Trn)$ such that for all $\uc \in\Ker(\Trn)$,
\begin{equation}
    \left\langle l,\uc\right\rangle = ( \lc,\uc)_{\Lc^2(\Omega)} + ( \div  \lc,\div \uc)_{L^2(\Omega)} = \sum_{i=1}^{n} (l_i,u_i)_{L^2(\Omega)} + (l_{n+1},\div \uc)_{L^2(\Omega)}.
\end{equation}
Now, suppose that $l$ vanishes on $(\D(\Omega))^n$. This implies that
\begin{equation*}
    \forall \phic \in (\D(\Omega))^n,\quad ( \lc,\boldsymbol{\phi})_{\Lc^2(\Omega)} + ( \div  \lc,\div\boldsymbol{\phi})_{L^2(\Omega)} = 0.
\end{equation*}
Then, in the sense of distributions on $\Omega$, we have
\begin{equation*}
     \lc=\nabla l_{n+1}.
\end{equation*}
Since $ \lc \in (L^2(\Omega))^n $, $l_{n+1} \in H^1(\Omega)$, it also holds that
\begin{equation*}
    \forall \uc \in\Ker(\Trn),\quad \left\langle l,\uc\right\rangle = (\nabla l_{n+1},\uc)_{\Lc^2(\Omega)} + (l_{n+1},\div \uc)_{L^2(\Omega)}.
\end{equation*}
According to our definition of the normal trace,
\begin{multline*}\label{DefNormTrace2}
    \forall \uc \in \Hdiv,\, \forall v \in  H^1(\Omega),\\
    (\uc,\nabla v)_{\Lc^2(\Omega)} + (\div \uc,v)_{L^2(\Omega)} = \left\langle \trn \uc,\Tr v\right\rangle_{\Hbm,\,\Hb}.
\end{multline*}
We apply this to $v = l_{n+1}$ knowing that $\Ker(\Trn) \subset \Hdiv$, which gives
\begin{equation*}
    \forall \uc \in\Ker(\Trn),\quad\left\langle l,\uc\right\rangle=\left\langle\trn \uc,\Tr l_{n+1}\right\rangle_{\Hbm,\,\Hb} = 0.
\end{equation*}
Hence, $(\D(\Omega))^n$ is dense in $\Ker(\Trn)$. Thus, the closure of $(\D(\Omega))^n$ is equal to $\Ker(\Trn)$ and by definition $\Hc_0(\div,\Omega)$ is the closure of $(\D(\Omega))^n$ in $\Hdiv$.
We conclude that $\Ker(\Trn)=\Hc_0(\div,\Omega)$.
\end{proof}

\section{Tangential trace operator}\label{sec:tangential_trace}

In what follows, we work in $\R^3$ and use the notations 
\begin{align}
    \Lc^2(\Omega)=(L^2(\Omega))^3, \text{ } \Hc^1(\Omega)=(H^1(\Omega))^3, \text{ } \Hbc = (\Hb)^3, \textit{ etc.}
\end{align}

Therefore, we have $(\uc\times  \nc, (\nc \times  \uc) \times  \nc)\in \Hbcm$ to consider. In the literature dealing with Lipschitz domains, it is common to refine the trace spaces by decomposing fields into explicit normal and tangential components, such as the spaces $\boldsymbol{L}_t^2(\partial\Omega)$ and $\boldsymbol{H}^{1/2}_{\parallel}(\partial \Omega)$ detailed in~\cite[Section 2.2.2]{Ciarlet_2024}. However, in our irregular setting, the absence of a pointwise well-defined outward normal vector makes such geometric decompositions impossible. Consequently, our first approach aligns with the fundamental, classical definition of the tangential trace (as presented, for instance, in~\cite[Theorem~2.11, p.~29]{GIRAULT-1986}). In this classical framework, the trace is defined globally as a continuous linear functional acting on the full boundary trace space. To clarify, we adopt different notation for the (generalized) tangential trace $\uc \times \nc$, depending on the regularity of the field $\uc$: we denote this functional $\widetilde{\TrT} \uc$ for $\uc \in \Hcurl$ and $\TrT\uc$ for $\uc \in \Hc^1(\Omega)$. We begin with a generalization of the Stokes formula.

\begin{theorem}[Generalized Stokes' Formula]\label{ThGStokesF}
Let $\Omega$ be a bounded $H^1$-extension domain of $\mathbb{R}^3$. For any $\uc \in \Hcurl$ and $\vc \in \Hc^1(\Omega)$, let the bilinear form $b(\uc,\vc)$ be defined by the Stokes integral:
\begin{equation}\label{EqDefBilinearForm_b}
    b(\uc,\vc) := \int_{\Omega}(\uc\cdot\rotv\vc - \vc\cdot\rotv\uc)\dx.
\end{equation}
This form depends on $\vc$ only through its trace $\Trc\vc$. It therefore induces a unique continuous linear functional on the trace space $\Hbc$, denoted $\uc \times \nc \in \Hbcm$. This gives the generalized Stokes' formula:
\begin{equation}\label{EqStokesTrT}
    \forall \vc \in \Hc^1(\Omega), \quad \langle \uc \times \nc, \Trc\vc \rangle_{\Hbcm,\, \Hbc} = b(\uc,\vc).
\end{equation}
\end{theorem}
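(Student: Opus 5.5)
The plan is to show, for fixed $\uc\in\Hcurl$, that $b(\uc,\cdot)$ is a continuous bilinear form on $\Hc^1(\Omega)$ that vanishes on $\Ker\Trc$. Then it factors through $\Trc$, and the surjectivity and Hilbert structure of Theorem~\ref{ThTrisom} give the functional on $\Hbc$.

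\textbf{Continuity on $\Hc^1(\Omega)$.} By Cauchy--Schwarz,
\begin{equation*}
|b(\uc,\vc)|\le \|\uc\|_{\Lc^2(\Omega)}\|\rotv\vc\|_{\Lc^2(\Omega)}+\|\rotv\uc\|_{\Lc^2(\Omega)}\|\vc\|_{\Lc^2(\Omega)}.
\end{equation*}
Using \eqref{IneqCcurl}, this is bounded by $\sqrt2\,\|\uc\|_{\Hcurl}\|\vc\|_{\Hc^1(\Omega)}$.

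\textbf{Vanishing on the kernel of the trace.} Applying Theorem~\ref{ThTrisom}, item~\ref{Trisom2}, componentwise gives $\Ker\Trc=(H^1_0(\Omega))^3$. For $\vc=\phic\in(\D(\Omega))^3$, the definition of the distributional curl of $\uc$ reads
\begin{equation*}
\int_\Omega \rotv\uc\cdot\phic\dx=\int_\Omega\uc\cdot\rotv\phic\dx,
\end{equation*}
so $b(\uc,\phic)=0$. Since $\phic\mapsto b(\uc,\phic)$ is $\Hc^1$-continuous and $(\D(\Omega))^3$ is dense in $(H^1_0(\Omega))^3$, we get $b(\uc,\vc)=0$ for every $\vc$ with $\Trc\vc=0$. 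Hence $b(\uc,\vc)$ depends only on $\Trc\vc$.

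\textbf{Definition and bound of the functional.} For $\gc\in\Hbc$, set $\langle\uc\times\nc,\gc\rangle:=b(\uc,\vc)$ for any $\vc\in\Hc^1(\Omega)$ with $\Trc\vc=\gc$. Such a $\vc$ exists by surjectivity, and the value is well defined by the previous step. Linearity is inherited from $b$. Taking the minimizing lift in \eqref{normTri} componentwise (the $1$-harmonic lift $\vc\in V_1(\Omega)^3$, for which $\|\vc\|_{\Hc^1(\Omega)}=\|\gc\|_{\Hbc}$ by item~\ref{Trisom4}) yields
\begin{equation*}
|\langle\uc\times\nc,\gc\rangle|\le\sqrt2\,\|\uc\|_{\Hcurl}\|\gc\|_{\Hbc}.
\end{equation*}
So $\uc\times\nc\in\Hbcm$. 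Uniqueness holds because $\Trc$ is onto, and \eqref{EqStokesTrT} holds by construction. The bound also shows that $\uc\mapsto\uc\times\nc$ is linear and continuous from $\Hcurl$ to $\Hbcm$.

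\textbf{Main difficulty.} The only step needing care is the kernel step: it relies on $\Ker\Tr=H^1_0(\Omega)$ holding for a general $H^1$-extension domain, which is exactly item~\ref{Trisom2}, together with the density of test fields in $(H^1_0(\Omega))^3$. Neither depends on any boundary regularity, so I expect the argument to go through as in the Lipschitz case.
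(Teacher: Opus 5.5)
Your proposal is correct and follows essentially the paper's proof: continuity of $b(\uc,\cdot)$ on $\Hc^1(\Omega)$ via Cauchy--Schwarz and \eqref{IneqCcurl}, vanishing on $\Ker\Trc=\Hc^1_0(\Omega)$ from the definition of the distributional curl together with density of $\Dc(\Omega)$, and factorization through the surjective trace. The only cosmetic difference is that you evaluate on the minimal ($1$-harmonic) lift, which gives the bound $\sqrt2\,\|\uc\|_{\Hcurl}\|\gc\|_{\Hbc}$ directly. The paper instead uses an arbitrary bounded right inverse $\boldsymbol{E}$ of the trace, obtaining $\sqrt2\,C_{\boldsymbol{E}}$, and only recovers the sharp constant $\sqrt2$ in Theorem~\ref{ThmTrTHcurl}.
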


\begin{proof}
For any $\vc \in \Hc^1(\Omega)$, let $\wc = \Trc \vc \in \Hbc$. Given $\uc \in \Hcurl$, we define the following linear functional:
\begin{equation}
    L_u : \wc \in \Hbc \mapsto L_u(\wc) = \int_{\Omega} \uc \cdot \rotvnesp(\boldsymbol{Ew}) \dx - \int_{\Omega} (\boldsymbol{Ew}) \cdot \rotv \uc \dx,
\end{equation}
where $\boldsymbol{E}: \Hbc \to \Hc^1(\mathbb{R}^3)$ is the 3D version of the extension operator introduced before (extension by component). Let $\vc = \left. \boldsymbol{E}\wc\right|_\Omega \in \Hc^1(\Omega)$. Then, $\Trc \vc$ exists and we have $\Trc \vc = \wc$. Thus, since the extension operator is continuous, and with the inequality~\eqref{IneqCcurl} applied to $\vc \in \Hc^1(\Omega)$, $L_u$ is continuous: 
\begin{align*}
|L_u \wc| &\leq \|\uc\|_{\Lc^2(\Omega)} \|\rotv \vc\|_{\Lc^2(\Omega)} + \|\rotv \uc\|_{\Lc^2(\Omega)}\|\vc\|_{\Lc^2(\Omega)}\\
&\leq \left(\|\uc\|^2_{\Lc^2(\Omega)} + \|\rotv\uc\|^2_{\Lc^2(\Omega)}\right)^{\frac{1}{2}} \left( \|\vc\|^2_{\Lc^2(\Omega)} + \|\rotv\vc\|^2_{\Lc^2(\Omega)} \right)^{\frac{1}{2}}\\
&\leq \sqrt{2} \|\uc\|_{\Hcurl} \|\vc\|_{\Hc^1(\Omega)}\\
&\leq \sqrt{2} C_{\boldsymbol{E}} \|\uc\|_{\Hcurl} \|\wc\|_{\Hbc},
\end{align*}
where the constant $C_{\boldsymbol{E}}$ is the norm of $\boldsymbol{E}$. We now verify that $L_u(\wc)$ is independent of the choice of $\boldsymbol{E}$. Let $\boldsymbol{E}'$ be another extension operator, associated with
\begin{equation*}
    L_u' : \wc \in \Hbc \mapsto L_u'(\wc) = \int_{\Omega} \uc \cdot \rotvnesp(\boldsymbol{E}'\boldsymbol{w}) \dx - \int_{\Omega} (\boldsymbol{E}'\boldsymbol{w}) \cdot \rotv \uc \dx.
\end{equation*}
Set $\vc' = \boldsymbol{E}'\wc|_\Omega$. Since $\Trc\vc = \Trc\vc' = \wc$, the difference $\boldsymbol\psi := \vc - \vc'$ belongs to $\Hc^1_0(\Omega)$, and
$$ L_u(\wc) - L_u'(\wc) = b(\uc, \boldsymbol\psi). $$
For any $\boldsymbol\phi \in \Dc(\Omega)$, the definition of the distributional curl gives $b(\uc, \boldsymbol\phi) = 0$. Since $\Dc(\Omega)$ is dense in $\Hc^1_0(\Omega)$ and the continuity bound above shows that $b(\uc, \cdot)$ is continuous on $\Hc^1(\Omega)$, passing to the limit gives $b(\uc, \boldsymbol\psi) = 0$ for every $\boldsymbol\psi \in \Hc^1_0(\Omega)$. In particular, $L_u(\wc)$ depends only on $\wc$, not on the extension operator. Therefore, as $\Trc \vc = \wc$, it is sufficient to define $L_u \wc = \left\langle \uc \times  \nc,\wc\right\rangle_{\Hbcm,\,\Hbc}$ to obtain the generalized Stokes formula~\eqref{EqStokesTrT}. 
\end{proof}

The Stokes formula gives us two different versions of the tangential trace operator. 
While working in $\Hcurl$ may seem natural since the Stokes formula involves working with $\rotv$, we will see that we need to restrict to $\Hc^1(\Omega)$ in order to have a decomposition in orthogonal and tangential components as in~\cite{Ciarlet_2024}. The same approach is adopted in~\cite{BUFFA-2002}, although our definition of tangential traces differs significantly: all known constructions for regular or Lipschitz boundaries heavily rely on the system $(\boldsymbol{\tau}_1,\boldsymbol{\tau}_2,\boldsymbol{n})$ composed of two tangential vectors and a normal vector at a boundary point, which do not exist in the non-Lipschitz case.

To overcome this geometric obstruction, our strategy is to rely exclusively on global variational identities. A crucial step in characterizing the properties of our trace operators is to establish a norm equivalence between the $\Hcurl$ and $\Hc^1(\Omega)$ topologies on appropriate functional subspaces. In classical theory, proving such coercivity results (e.g., \cite{COSTABEL-1991}) requires manipulating tangential operators and normal vector identities. In our abstract setting, we must instead identify a subspace where the boundary contributions naturally vanish, allowing us to link the $\rotvnesp$ and $\operatorname{div}$ operators directly to the full gradient.

Motivated by this, we now look for a space on which the problems $\rotv \rotv + \textbf{I}$ and $-\Delta + \textbf{I}$ are equivalent. We recall that $\Hc^1(\Omega)$ is the Hilbert space endowed with the canonical inner product
\begin{equation}\label{EqInnerProductH1}
	\forall \uc,\vc\in \Hc^1(\Omega), \quad (\uc,\vc)_{\Hc^1(\Omega)} = (\nabla \uc,\nabla \vc)_{\Lc^2(\Omega)} + (\uc,\vc)_{\Lc^2(\Omega)},
\end{equation}
where $\nabla \uc$ denotes the tensor $(\partial_j u_i)_{1 \le i,j \le 3}$, and the $\Lc^2(\Omega)$ inner product for tensors is defined by
\begin{equation*}
    (\nabla \uc, \nabla \vc)_{\Lc^2(\Omega)} := \int_\Omega \nabla \uc : \nabla \vc \, \mathrm{d}x = \int_\Omega \sum_{1 \le i,j \le 3} \partial_j u_i \, \partial_j v_i \dx.
\end{equation*}
We first establish two technical lemmas which connect the $\rotvnesp$ and $\divnesp$ operators on $\Hc^1_0(\Omega)$.

\begin{lemma}\label{InclH10TrtTrn}
Let $\Omega$ be a bounded $H^1$-extension domain of $\R^3$. Then
$$
\Hc^1_0(\Omega) \subset \Ker(\TrT) \quad \text{and} \quad \Hc^1_0(\Omega) \subset \Ker(\Trn).
$$
\end{lemma}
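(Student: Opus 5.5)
The plan is to prove both inclusions by density: show that each bilinear expression defining the relevant trace vanishes for test fields in $\Dc(\Omega)$, then pass to the limit in $\Hc^1_0(\Omega)=\overline{\Dc(\Omega)}^{\Hc^1}$ using continuity. Here $\TrT\uc$ for $\uc\in\Hc^1(\Omega)$ is the functional $\uc\times\nc\in\Hbcm$ of Theorem~\ref{ThGStokesF}, restricted to $\Hc^1(\Omega)\subset\Hcurl$. $\Trn$ is the normal trace of Theorem~\ref{ThNormalTrace}, restricted to $\Hc^1(\Omega)\subset\Hdiv$; the restriction is legitimate by \eqref{IneqCdiv}.

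\emph{Normal trace.} Let $\uc\in\Hc^1_0(\Omega)$ and pick $\uc_k\in\Dc(\Omega)$ with $\uc_k\to\uc$ in $\Hc^1(\Omega)$. For $v\in H^1(\Omega)$, the distributional definition of the divergence gives
\[
\int_\Omega (\div\uc_k)v\dx+\int_\Omega \uc_k\cdot\nabla v\dx=0 .
\]
To see this, approximate $v$ in $H^1(\Omega)$ by elements of $\D(\overline\Omega)$, which is dense as recalled in Section~\ref{sec:ext_domains}. For each such smooth approximant $\varphi$, the identity holds exactly because $\uc_k$ has compact support in $\Omega$, so $\int_\Omega \div(\uc_k\varphi)\dx=0$. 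Both sides are continuous in $v$ for the $H^1$ norm, so the identity extends to all $v$. Hence $\langle \Trn\uc_k,\Tr v\rangle=0$ for every $v$, i.e.\ $\Trn\uc_k=0$. By continuity of $\Trn$ (Theorem~\ref{ThNormalTrace}) together with \eqref{IneqCdiv}, $\|\Trn\uc-\Trn\uc_k\|_{\Hbm}\le\sqrt3\,\|\uc-\uc_k\|_{\Hc^1(\Omega)}\to0$, so $\Trn\uc=0$.

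\emph{Tangential trace.} The argument is the same with $b$ in place of the Green form. For $\uc_k\in\Dc(\Omega)$ and $\vc\in\Hc^1(\Omega)$, we have $b(\uc_k,\vc)=0$. First take $\vc\in(\D(\overline\Omega))^3$: then $\uc_k\cdot\rotv\vc-\vc\cdot\rotv\uc_k=-\div(\uc_k\times\vc)$, and this has zero integral because $\uc_k\times\vc$ is compactly supported in $\Omega$. Then extend to all $\vc\in\Hc^1(\Omega)$ by density, since $b(\uc_k,\cdot)$ is $\Hc^1$-continuous. Hence $\TrT\uc_k=0$. Next, the continuity estimate in the proof of Theorem~\ref{ThGStokesF} together with \eqref{IneqCcurl} gives
\[
\|\TrT(\uc-\uc_k)\|_{\Hbcm}\le 2C_{\boldsymbol E}\,\|\uc-\uc_k\|_{\Hc^1(\Omega)}\to0,
\]
so $\TrT\uc=0$. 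As a shortcut, one could even skip the approximation of $\uc$: $b(\uc,\vc)=-b(\vc,\uc)$, and the proof of Theorem~\ref{ThGStokesF} shows $b(\vc,\boldsymbol\psi)=0$ for $\boldsymbol\psi\in\Hc^1_0(\Omega)$ and $\vc\in\Hcurl$.

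There is no real obstacle here. The only delicate point is the density of $\D(\overline\Omega)$ in $H^1(\Omega)$, which the paper already asserts for $H^1$-extension domains. Alternatively, this point can be avoided entirely by using the antisymmetry argument just mentioned, together with its scalar analogue from the proof of Definition~\ref{DefNormDerInt}.
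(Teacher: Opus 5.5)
Your argument is correct. It differs from the paper's mainly in which prior results it invokes.

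For the normal trace, the paper uses the same approximation by $\Dc(\Omega)$ and the bound $\|\cdot\|_{\Hdiv}\le\sqrt3\,\|\cdot\|_{\Hc^1(\Omega)}$. It then concludes in one step that $\vc\in\Hdivz=\Ker(\Trn)$ via Proposition~\ref{propHdiv}. You instead check by hand that $\Trn$ vanishes on $\Dc(\Omega)$ and pass to the limit. So you only need the trivial inclusion $\Hdivz\subset\Ker(\Trn)$, not the full kernel characterization, whose nontrivial direction requires the Riesz/annihilator argument.

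For the tangential trace, the paper's proof is exactly your ``shortcut'': $\langle\TrT\vc,\Trc\wc\rangle=-\langle\TrT\wc,\Trc\vc\rangle=0$. This uses the antisymmetry of $b$ and $\Trc\vc=\mathbf 0$ for $\vc\in\Hc^1_0(\Omega)$ (Theorem~\ref{ThTrisom}). That route is shorter. Your main density argument instead treats both traces uniformly and never touches the kernel of $\Trc$.

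Finally, the density of $\D(\overline\Omega)$ in $H^1(\Omega)$ is not a delicate point, and it is not needed at all. For $\uc_k\in\Dc(\Omega)$, the identities $\int_\Omega(\div\uc_k)v\dx=-\int_\Omega\uc_k\cdot\nabla v\dx$ and $\int_\Omega\uc_k\cdot\rotv\vc\dx=\int_\Omega\vc\cdot\rotv\uc_k\dx$ are simply the definitions of the distributional gradient of $v\in H^1(\Omega)$ and of the distributional curl of $\vc\in\Hc^1(\Omega)$. They are obtained by testing against the compactly supported field $\uc_k$.
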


\begin{proof}
Let $\vc\in\Hc^1_0(\Omega)$. For the first inclusion, we use the antisymmetric property of the Stokes formula: for all $\wc \in \Hc^1(\Omega)$, 
$$\langle \TrT\vc, \Trc \wc \rangle_{\Hbcm,\, \Hbc} = - \langle \TrT\wc, \Trc \vc \rangle_{\Hbcm,\, \Hbc} = 0.$$
For the second inclusion, we approximate $\vc\in\Hc^1_0(\Omega)$ by a sequence $(\psic_k) \subset \Dc(\Omega)$ and use the fact that the $\Hdiv$-norm is weaker than the $\Hc^1(\Omega)$-norm:
$$
\|\vc - \psic_k\|_{\Hdiv} \leq \sqrt{3} \|\vc - \psic_k\|_{\Hc^1(\Omega)} \xrightarrow[k \to \infty]{} 0,
$$
which means that $\vc \in \Hdivz = \Ker(\Trn)$ by Proposition~\ref{propHdiv}.
\end{proof}

\begin{lemma}\label{LemRotDivLapl}
Let $\Omega$ be a bounded $H^1$-extension domain of $\R^3$. For all $\uc \in \Hc^1(\Omega)$ and all $\vc \in \Hc^1_0(\Omega)$,
\begin{equation}\label{RotDivLapl}
    (\rotv\uc, \rotv\vc)_{\Lc^2(\Omega)} + (\div\uc, \div\vc)_{L^2(\Omega)} = (\nabla\uc, \nabla\vc)_{\Lc^2(\Omega)}.
\end{equation}
\end{lemma}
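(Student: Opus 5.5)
Since both sides of \eqref{RotDivLapl} are bilinear and jointly continuous on $\Hc^1(\Omega)\times\Hc^1(\Omega)$, I reduce to smooth test fields and use a pointwise algebraic identity. By \eqref{IneqCcurl} and \eqref{IneqCdiv}, the left-hand side satisfies
\[
\bigl|(\rotv\uc,\rotv\vc)_{\Lc^2(\Omega)}+(\div\uc,\div\vc)_{L^2(\Omega)}\bigr|\le 5\,\|\uc\|_{\Hc^1(\Omega)}\|\vc\|_{\Hc^1(\Omega)},
\]
and the right-hand side is bounded by $\|\uc\|_{\Hc^1(\Omega)}\|\vc\|_{\Hc^1(\Omega)}$. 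By definition, $\Hc^1_0(\Omega)$ is the closure of $\Dc(\Omega)$ in $\Hc^1(\Omega)$. It therefore suffices to prove the identity for $\vc=\phic\in\Dc(\Omega)$ and arbitrary $\uc\in\Hc^1(\Omega)$. Since $\D(\overline\Omega)$ is dense in $H^1(\Omega)$, one could also take $\uc$ smooth, but this is not needed.

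For $\phic\in\Dc(\Omega)$ and $\uc\in\Hc^1(\Omega)$, I start from the pointwise identity valid for first derivatives:
\[
\rotv\uc\cdot\rotv\phic+\div\uc\,\div\phic-\nabla\uc:\nabla\phic=\sum_{i,j}\bigl(\partial_i u_i\,\partial_j\phi_j-\partial_j u_i\,\partial_i\phi_j\bigr).
\]
To check it, expand $\rotv\uc\cdot\rotv\phic=\sum_{i\neq j}\bigl(\partial_j u_i\,\partial_j\phi_i-\partial_j u_i\,\partial_i\phi_j\bigr)$. The diagonal terms $i=j$ cancel between the $\div\div$ term and the last sum. So the claim reduces to showing
\[
\int_\Omega \partial_i u_i\,\partial_j\phi_j\dx=\int_\Omega \partial_j u_i\,\partial_i\phi_j\dx\qquad\text{for all } i,j.
\]
This follows by integrating by parts twice, which is legitimate because $\phi_j$ has compact support in $\Omega$ and $u_i\in H^1(\Omega)$. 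In the sense of distributions, both integrals equal $-\langle u_i,\partial_i\partial_j\phi_j\rangle$, because $\partial_i\partial_j=\partial_j\partial_i$ on $\D(\Omega)$. Only distributional derivatives of $u_i\in L^2(\Omega)$ are used, so no boundary terms appear and the irregularity of $\bord$ plays no role.

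Summing over $i,j$ gives \eqref{RotDivLapl} for $\vc\in\Dc(\Omega)$, and the continuity bounds above extend it to $\vc\in\Hc^1_0(\Omega)$. The only step needing care is the index bookkeeping in the pointwise identity, that is, making sure the cross terms in $\rotv\uc\cdot\rotv\phic$ carry exactly the signs that make them combine with $\div\uc\,\div\phic$. I would verify this with the Levi-Civita identity $\sum_k\epsilon_{kij}\epsilon_{klm}=\delta_{il}\delta_{jm}-\delta_{im}\delta_{jl}$. No result on traces is needed; Lemma~\ref{InclH10TrtTrn} only explains why these terms are the vanishing boundary contributions.
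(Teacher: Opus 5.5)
Your proof is correct. It rests on the same underlying fact as the paper's proof: the distributional identity $\Delta\uc = \nabla(\div\uc) - \rotvnesp(\rotv\uc)$, tested against $\Dc(\Omega)$ and extended by density. You write that identity in index form, as the symmetry $\partial_i\partial_j=\partial_j\partial_i$ on test functions.

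The difference is in how boundary contributions are handled. The paper keeps a general $\vc\in\Hc^1_0(\Omega)$ and removes the boundary terms through Lemma~\ref{InclH10TrtTrn}: it uses $\vc\in\Ker(\TrT)\cap\Ker(\Trn)$ and then invokes the generalized Stokes and Green formulas. You instead reduce to $\vc\in\Dc(\Omega)$ at the outset, using the continuity bounds \eqref{IneqCdiv}--\eqref{IneqCcurl}, so no boundary term ever appears.

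Your route is more elementary and more general. It needs no trace operator, so the identity holds on any open set. It also avoids interpreting $\langle\rotvnesp(\rotv\uc),\vc\rangle$ through the Stokes formula \eqref{EqStokesTrT}, whose first argument must lie in $\Hcurl$, which $\rotv\uc$ need not. That pairing is really the $\Hc^{-1}$--$\Hc^1_0$ duality, justified by exactly your density argument.

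The paper's route reads the identity as the vanishing of the tangential and normal boundary terms. That reading is what motivates the comparison with Costabel's larger spaces in Remark~\ref{RemCostabel}.
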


\begin{proof}
Let $\uc \in \Hc^1(\Omega)$ and $\vc \in \Hc^1_0(\Omega)$. Since $\vc \in \Hc^1_0(\Omega) \subset \Ker(\TrT)$, the Stokes formula~\eqref{EqStokesTrT} with vanishing tangential trace gives
\begin{equation*}
    \langle\rotvnesp(\rotv\uc), \vc\rangle = (\rotv\uc, \rotv\vc)_{\Lc^2(\Omega)}.
\end{equation*}
Since $\vc \in \Hc^1_0(\Omega) \subset \Ker(\Trn)$, the Green's formula~\eqref{EqItByPn} with vanishing normal trace gives
\begin{equation*}
    \langle\nabla(\div\uc), \vc\rangle = -(\div\uc, \div\vc)_{L^2(\Omega)}.
\end{equation*}
Using the vector identity $\Delta\uc = \nabla(\div\uc) - \rotvnesp(\rotv\uc)$, we obtain
\begin{equation*}
    \langle\Delta\uc, \vc\rangle = -(\div\uc, \div\vc)_{L^2(\Omega)} - (\rotv\uc, \rotv\vc)_{\Lc^2(\Omega)}.
\end{equation*}
On the other hand, using the density of $\Dc(\Omega)$ in $\Hc^1_0(\Omega)$, we find
$$\langle\Delta\uc, \vc\rangle = -(\nabla\uc, \nabla\vc)_{\Lc^2(\Omega)},$$ 
which yields~\eqref{RotDivLapl}.
\end{proof}

This identity immediately yields a powerful coercivity and equivalence result for this specialized subspace, extending classical results to non-Lipschitz geometries.

\begin{theorem}\label{ThmCoercivityZ}
Let $\Omega$ be a bounded $H^1$-extension domain of $\R^3$.
\begin{enumerate}
    \item The bilinear form 
    $$a(\uc,\vc) := (\rotv\uc,\rotv\vc)_{\Lc^2(\Omega)} + (\div\uc,\div\vc)_{L^2(\Omega)}$$ satisfies, for all $\uc \in \Hc^1_0(\Omega)$:
    \begin{equation}
        a(\uc,\uc) + \|\uc\|_{\Lc^2(\Omega)}^2 = \|\uc\|_{\Hc^1(\Omega)}^2.
    \end{equation}
    Consequently, the form $a + (\cdot,\cdot)_{\Lc^2}$ is coercive on $\Hc^1_0(\Omega)$ with respect to the $\Hc^1(\Omega)$-norm.
    \item If $\uc \in \Hc^1_0(\Omega)$ satisfies $\div\uc = 0$, then for all $\vc \in \Hc^1_0(\Omega)$:
    \begin{equation*}
        (\rotv\uc,\rotv\vc)_{\Lc^2(\Omega)} + (\uc,\vc)_{\Lc^2(\Omega)} = (\nabla\uc,\nabla\vc)_{\Lc^2(\Omega)} + (\uc,\vc)_{\Lc^2(\Omega)}.
    \end{equation*}
    Hence the weak formulations of $\rotvnesp(\rotv\uc) + \uc = \boldsymbol{f}$ and $-\Delta\uc + \uc = \boldsymbol{f}$, tested against $\Hc^1_0(\Omega)$, are identical on the divergence-free subspace. Both are well-posed by the Lax-Milgram theorem.
\end{enumerate}
\end{theorem}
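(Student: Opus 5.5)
The plan is to derive both items directly from Lemma~\ref{LemRotDivLapl}, taking $\uc,\vc \in \Hc^1_0(\Omega)$; this is admissible because $\Hc^1_0(\Omega)\subset\Hc^1(\Omega)$.

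\textbf{Item 1.} Apply \eqref{RotDivLapl} with $\vc=\uc\in\Hc^1_0(\Omega)$. This gives $a(\uc,\uc)=\|\nabla\uc\|^2_{\Lc^2(\Omega)}$. Adding $\|\uc\|^2_{\Lc^2(\Omega)}$ to both sides and using the canonical inner product \eqref{EqInnerProductH1}, we get $a(\uc,\uc)+\|\uc\|^2_{\Lc^2(\Omega)}=\|\uc\|^2_{\Hc^1(\Omega)}$. This is coercivity with constant $1$.

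\textbf{Continuity of the form.} Lax--Milgram also needs the bilinear form to be continuous on $\Hc^1_0(\Omega)$. This follows from Cauchy--Schwarz together with the bounds \eqref{IneqCdiv} and \eqref{IneqCcurl}. Alternatively, by the polarized form of Lemma~\ref{LemRotDivLapl}, $a(\uc,\vc)+(\uc,\vc)_{\Lc^2}$ is exactly the $\Hc^1(\Omega)$ inner product on $\Hc^1_0(\Omega)$, so continuity is immediate.

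\textbf{Item 2.} Let $\uc\in\Hc^1_0(\Omega)$ with $\div\uc=0$. Then $(\div\uc,\div\vc)_{L^2(\Omega)}=0$ for every $\vc\in\Hc^1_0(\Omega)$. Identity \eqref{RotDivLapl} therefore reduces to $(\rotv\uc,\rotv\vc)=(\nabla\uc,\nabla\vc)$, and adding $(\uc,\vc)_{\Lc^2(\Omega)}$ to both sides gives the displayed equality.

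\textbf{Equivalence of the weak problems.} For $\fc\in\Lc^2(\Omega)$, the weak formulations of $\rotvnesp(\rotv\uc)+\uc=\fc$ and $-\Delta\uc+\uc=\fc$ with test space $\Hc^1_0(\Omega)$ have the same right-hand side $(\fc,\vc)_{\Lc^2}$. By the identity just proved, their left-hand sides agree whenever the trial function is divergence-free, so the two formulations coincide there.

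\textbf{Well-posedness.} The problem for $-\Delta+\mathbf I$ on $\Hc^1_0(\Omega)$ has the $\Hc^1$ inner product as its bilinear form, which is trivially coercive, so Lax--Milgram applies. For the curl--curl problem, the natural reading is to pose it on $\Kc:=\{\uc\in\Hc^1_0(\Omega)\mid \div\uc=0\}$ with test functions in $\Kc$.

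\textbf{Main obstacle.} The delicate point is whether $\Kc$ is a closed subspace. It is, because $\div:\Hc^1(\Omega)\to L^2(\Omega)$ is continuous, so $\Kc$ is a Hilbert space. On $\Kc$, item 1 shows that $(\rotv\cdot,\rotv\cdot)+(\cdot,\cdot)_{\Lc^2}$ equals the $\Hc^1$ inner product, so it is coercive there and Lax--Milgram applies.

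I would add a short remark that ``identical'' is meant in this restricted sense. Testing against all of $\Hc^1_0(\Omega)$ while keeping the solution divergence-free would additionally require $\fc$ to satisfy a compatibility condition, so the clean statement is the one on $\Kc$.
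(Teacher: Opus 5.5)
Your proposal is correct and follows the paper's route: the paper gives no separate proof and presents the theorem as an immediate consequence of Lemma~\ref{LemRotDivLapl}, taking $\vc=\uc$ for item~1 and dropping the vanishing divergence term for item~2, exactly as you do. Your extra care is useful: posing the curl--curl problem on the closed divergence-free subspace with divergence-free test functions, and your compatibility remark, make precise the paper's rather loose well-posedness claim; the symbol $\Kc$ is already reserved in the paper for $\Ker(\TrT)$, so use a different name for that subspace.
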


\begin{remark}\label{RemCostabel}
For Lipschitz domains, the bilinear form $a$ is coercive on the larger spaces $\Hc^1(\Omega) \cap \Ker(\TrT)$ and $\Hc^1(\Omega) \cap \Ker(\Trn)$ \cite[Section~2]{COSTABEL-1991}, using tangential operators and normal vector identities. Theorem~\ref{ThmCoercivityZ} proves the corresponding result on $\Hc^1_0(\Omega)$ without invoking such identities, relying solely on the formula of Lemma~\ref{LemRotDivLapl}. In contrast with the Lipschitz case, we must work in the strictly smaller space $\Hc^1_0(\Omega)$ instead of the larger spaces where only the tangential or normal trace vanishes, reflecting the additional constraints imposed by the non-Lipschitz setting.
\end{remark}

\subsection{\texorpdfstring{$\Hcurl$-framework}{H(curl,Omega)-framework}}\label{SubsecTrTHcurl}

First, we give some properties of the broader version of the tangential trace, which is a direct adaptation of what is done in~\cite[Section 2.2.2]{Ciarlet_2024}.

\begin{theorem}\label{ThmTrTHcurl}
    Let $\Omega$ be a bounded $H^1$-extension domain of $\mathbb{R}^3$. Then the tangential trace operator is defined in the sense of Theorem~\ref{ThGStokesF} by
    \begin{align*}
        \widetilde{\TrT} \colon \Hcurl &\to \Hbcm\\
    \uc & \mapsto \widetilde{\TrT} \uc = \uc \times  \nc.
    \end{align*}
    It is a linear and continuous mapping, which satisfies by~\eqref{EqStokesTrT}:
    \begin{equation}
     \forall \vc \in \Hc^1(\Omega), \left\langle \widetilde{\TrT} \uc,\Trc \vc\right\rangle_{\Hbcm,\,\Hbc} =
     \int_{\Omega}(\uc\cdot\rotv\vc - \vc\cdot\rotv\uc)\dx.
    \end{equation}
\end{theorem}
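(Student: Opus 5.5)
The plan is to read Theorem~\ref{ThmTrTHcurl} as a repackaging of Theorem~\ref{ThGStokesF}. Well-definedness, linearity and continuity of $\widetilde{\TrT}$ all follow from the construction of the functional $L_u$ in that proof.

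\emph{Well-definedness.} For fixed $\uc \in \Hcurl$, Theorem~\ref{ThGStokesF} gives a unique element $\uc\times\nc \in \Hbcm$ such that $\langle \uc\times\nc, \Trc\vc\rangle = b(\uc,\vc)$ for all $\vc\in\Hc^1(\Omega)$. This element is well defined because of two facts:
\begin{itemize}
\item $b(\uc,\cdot)$ vanishes on $\Hc^1_0(\Omega)$, by density of $\Dc(\Omega)$ and the definition of the distributional curl;
\item $\Trc$ is surjective onto $\Hbc$ with kernel $\Hc^1_0(\Omega)$, by Theorem~\ref{ThTrisom} applied componentwise.
\end{itemize}
We then set $\widetilde{\TrT}\uc := \uc\times\nc$. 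The identity stated in the theorem is exactly \eqref{EqStokesTrT}.

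\emph{Linearity.} The form $b(\uc,\vc)$ is linear in $\uc$ for each fixed $\vc$. Hence $\widetilde{\TrT}(\alpha\uc_1+\beta\uc_2)$ and $\alpha\widetilde{\TrT}\uc_1+\beta\widetilde{\TrT}\uc_2$ agree on every $\Trc\vc$. Since $\Trc$ is surjective, these are all elements of $\Hbc$, so the two functionals coincide.

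\emph{Continuity.} Rather than the extension constant $C_{\boldsymbol E}$, I would use the optimal lift. Given $\wc\in\Hbc$, pick $\vc\in \Hc^1(\Omega)$ with $\Trc\vc=\wc$ and $\|\vc\|_{\Hc^1(\Omega)}=\|\wc\|_{\Hbc}$. Each component is taken to be the $1$-harmonic lift from $V_1(\Omega)$, using the minimum in \eqref{normTri} and item~\ref{Trisom4} of Theorem~\ref{ThTrisom}. Then Cauchy--Schwarz and \eqref{IneqCcurl} give
\[
|\langle \widetilde{\TrT}\uc,\wc\rangle| \le \|\uc\|_{\Hcurl}\,\|\vc\|_{\Hcurl}\le \sqrt2\,\|\uc\|_{\Hcurl}\|\wc\|_{\Hbc}.
\]
It follows that $\|\widetilde{\TrT}\uc\|_{\Hbcm}\le\sqrt2\|\uc\|_{\Hcurl}$.

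The only delicate point is the vector trace norm. One has to check that the norm on $\Hbc=(\Hb)^3$ is the product norm, so that componentwise optimal lifts realize it. This is immediate from the Hilbert structure in \eqref{DefHbInnerProd}. Surjectivity is not claimed in the theorem and is noted in the introduction as open, so no further step is needed.
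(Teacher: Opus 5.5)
Your proposal is correct and follows essentially the paper's argument: linearity comes from the Stokes formula, and continuity comes from Cauchy--Schwarz together with \eqref{IneqCcurl} and the trace norm \eqref{normTri}, giving the same constant $\sqrt2$. The only difference is cosmetic: you insert the componentwise $1$-harmonic minimizing lift directly, whereas the paper bounds for an arbitrary pre-image $\vc$ of $\wc$ and then takes the infimum, which is equivalent because that minimum is attained.
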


\begin{proof}
Linearity comes from Stokes formula~\eqref{EqStokesTrT}, by linearity of the integral and of the $\rotv$ operator.\newline
To show continuity, we proceed in the same way as we did in Theorem~\ref{ThNormalTrace}.
Let $\uc \in \Hcurl$ and $\vc \in \Hc^1(\Omega)$. By definition of the tangential trace, we have
    \begin{align}
        \left\langle \uc \times  \nc,\Trc \vc\right\rangle_{\Hbcm,\,\Hbc} =
        \int_{\Omega}(\uc\cdot\rotv\vc - \vc\cdot\rotv\uc)\dx.
    \end{align}
    Therefore, using Cauchy-Schwarz inequality in $\mathbb{R}^2$, we find
    \begin{align*}
        |\left\langle \uc \times  \nc,\Trc \vc \right\rangle_{\Hbcm,\,\Hbc}| &\leq \|\uc\|_{\Lc^2(\Omega)} \|\rotv \vc\|_{\Lc^2(\Omega)} + \|\vc\|_{\Lc^2(\Omega)} \|\rotv \uc\|_{\Lc^2(\Omega)}  \\
        &\leq \|\uc\|_{\Hcurl} \|\vc\|_{\Hcurl}\\
        &\leq \sqrt{2}  \|\uc\|_{\Hcurl} \|\vc\|_{\Hc^1(\Omega)}.
    \end{align*}
        We then have for all $\vc \in \Hc^1(\Omega)$ such that $\wc = \Trc \vc$:
    \begin{equation*}
        |\left\langle \uc \times  \nc, \wc \right\rangle_{\Hbcm,\,\Hbc}| \leq  \sqrt{2} \|\uc\|_{\Hcurl} \|\vc\|_{\Hc^1(\Omega)}.
    \end{equation*}
    Hence, the previous inequality implies by~\eqref{normTri} that
    \begin{multline*}
    \forall \uc \in \Hcurl,\; \forall \wc \in \Hbc,\quad \\
    |\left\langle \uc \times \nc,\wc \right\rangle_{\Hbcm,\,\Hbc}| 
    \leq \sqrt{2} \|\uc\|_{\Hcurl} \|\wc\|_{\Hbc}.
    \end{multline*}
    Therefore, we have the continuity of $\widetilde{\TrT}$: 
    \begin{equation}
        \|\widetilde{\TrT}\, \uc\|_{\Hbcm} \leq \sqrt{2} \|\uc\|_{\Hcurl}.
    \end{equation}    
\end{proof}

\begin{remark}
Here, we do not claim that the tangential trace operator is surjective. In fact, even in the case of a Lipschitz domain, it is known not to be surjective, see~\cite{BUFFA-2002} for more details.
\end{remark}

The next step is to characterize the kernel of the tangential trace operator. For general domains in $\R^3$, this 
characterization
depends on the geometric regularity of the domain. We therefore first state this 
characterization
as a formal assumption 
in $\R^3$
and then, in the next subsection, we will prove that this assumption is in fact satisfied by all bounded $H^1$-extension domains in the special case of $\R^2$.

\begin{definition}
Let $\Omega$ be an open subset of $\R^3$. The Hilbert space $\Hcurlz$ is defined as the closure of $\Dc(\Omega)$ in $\Hcurl$:
    \begin{align}
        \forall \wc \in \Hcurlz, \exists(\wc_k)_k \in (\Dc(\Omega))^{\mathbb{N}}, \lim\limits_{k \rightarrow +\infty} \|\wc-\wc_k\|_{\Hcurl} = 0.
    \end{align}
\end{definition}

It is clear from the generalized Stokes' formula~\eqref{EqStokesTrT} and the continuity of the tangential trace that any element in $\Hcurlz$ has a zero tangential trace, which gives the inclusion $\Hcurlz \subset \Ker(\widetilde{\TrT})$. The reverse inclusion is more difficult. We formalize it as an assumption on the domain's regularity.

\begin{assumption}\label{AssumpHcurlz3D}
A bounded $H^1$-extension domain $\Omega \subset \R^3$ is called Stokes-regular if the following characterization holds: a field $\uc \in \Hcurl$ belongs to $\Hcurlz$ if and only if it satisfies the variational condition
\begin{equation}\label{EqVarCond}
    \forall \boldsymbol{\phi} \in \Dc(\overline\Omega), \quad \int_\Omega (\boldsymbol{u} \cdot \rotv \boldsymbol{\phi} - \boldsymbol{\phi} \cdot \rotv \boldsymbol{u}) \dx = 0.
\end{equation}
\end{assumption}

This assumption, which holds for instance for finite unions of star-shaped domains \cite[Lemma 2.4, p.~33]{GIRAULT-1986}, allows us to fully characterize the kernel of the tangential trace.

\begin{proposition}\label{propDensKerTrT}
Let $\Omega \subset \R^3$ be a bounded, Stokes-regular $H^1$-extension domain. Then:
\begin{equation}\label{DensityCurl}
  \Hcurlz = \Ker(\widetilde{\TrT}).
\end{equation}
\end{proposition}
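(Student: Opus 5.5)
We prove the two inclusions separately. The inclusion $\Hcurlz \subset \Ker(\widetilde{\TrT})$ is essentially already observed before Assumption~\ref{AssumpHcurlz3D}. Let $\uc \in \Hcurlz$ and take $(\wc_k)_k \subset \Dc(\Omega)$ with $\wc_k \to \uc$ in $\Hcurl$. For each $k$ and every $\vc \in \Hc^1(\Omega)$, the quantity $b(\wc_k,\vc)$ vanishes. To see this, note that $b(\wc_k,\vc) = -b(\vc,\wc_k)$ by antisymmetry, and $b(\vc,\wc_k)=0$ by the definition of the distributional curl of $\vc$ tested against $\wc_k \in \Dc(\Omega)$. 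Therefore $\widetilde{\TrT}\wc_k = 0$. By the continuity estimate $\|\widetilde{\TrT}\uc\|_{\Hbcm} \le \sqrt2\|\uc\|_{\Hcurl}$ of Theorem~\ref{ThmTrTHcurl}, passing to the limit gives $\widetilde{\TrT}\uc = 0$.

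For the reverse inclusion, let $\uc \in \Ker(\widetilde{\TrT})$. By the generalized Stokes formula~\eqref{EqStokesTrT}, we have $b(\uc,\vc) = \langle \widetilde{\TrT}\uc, \Trc\vc\rangle_{\Hbcm,\,\Hbc} = 0$ for all $\vc \in \Hc^1(\Omega)$. Every $\boldsymbol{\phi} \in \Dc(\overline\Omega)$, being the restriction to $\Omega$ of a smooth compactly supported field on $\R^3$, belongs to $\Hc^1(\Omega)$. Consequently,
\begin{equation*}
\int_\Omega (\uc\cdot\rotv\boldsymbol{\phi} - \boldsymbol{\phi}\cdot\rotv\uc)\dx = b(\uc,\boldsymbol{\phi}) = 0 \quad \text{for all } \boldsymbol{\phi}\in\Dc(\overline\Omega).
\end{equation*}
This is exactly the variational condition~\eqref{EqVarCond}. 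Since $\Omega$ is Stokes-regular, Assumption~\ref{AssumpHcurlz3D} gives $\uc \in \Hcurlz$, which concludes the proof.

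The argument is therefore short, and the only point needing care is the membership $\Dc(\overline\Omega)\subset\Hc^1(\Omega)$ together with the correct sign and antisymmetry conventions for $b$ in the first inclusion. Neither presents a genuine obstacle. All the genuine difficulty, namely the density of $\Dc(\Omega)$ in the kernel, is deliberately placed inside Assumption~\ref{AssumpHcurlz3D}. We do not attempt to bypass it here, for example by a Riesz-duality argument as in Proposition~\ref{propHdiv}. Such an argument would produce an $\Lc^2$ field whose curl lies in $\Lc^2$, but not an $\Hc^1$ field to which the Stokes formula could be applied, and this is precisely why the hypothesis is needed in $\R^3$.
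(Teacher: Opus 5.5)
Your proof is correct and follows the paper's argument: the kernel condition, restricted to $\Dc(\overline\Omega)\subset\Hc^1(\Omega)$, is exactly the variational condition~\eqref{EqVarCond}, so Stokes-regularity applies. The only difference is that you prove $\Hcurlz \subset \Ker(\widetilde{\TrT})$ directly by approximation from $\Dc(\Omega)$ and continuity, whereas the paper obtains both inclusions from the ``iff'' of Assumption~\ref{AssumpHcurlz3D} together with the density of $\Dc(\overline\Omega)$ in $\Hc^1(\Omega)$, a density your route does not need.
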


\begin{proof}
Let $\uc \in \Hcurl$. Since $\Omega$ is Stokes-regular, we have the following equivalence:
\begin{equation*}
    \uc \in \Hcurlz \iff \forall \phic \in \Dc(\overline\Omega), \int_\Omega (\uc \cdot \rotv \phic - \phic \cdot \rotv \uc)\dx = 0.
\end{equation*}
Now we use the fact that $\Dc(\overline\Omega)$ is dense in $\Hc^1(\Omega)$. 
This gives
\begin{align*}
    \uc \in \Hcurlz &\iff \forall \vc \in \Hc^1(\Omega), \int_\Omega (\uc \cdot \rotv \vc - \vc \cdot \rotv \uc)\dx = 0 \\
    &\iff \widetilde{\TrT} \uc = 0 \\
    &\iff \uc \in \Ker(\widetilde{\TrT}).\qedhere
\end{align*}
\end{proof}

\begin{remark}
Our approach, which defines $\Hcurlz$ via smooth function approximation and then assumes the variational characterization \eqref{EqVarCond}, follows the classical structure of~\cite[Chapter I]{GIRAULT-1986}. We note that an alternative, equivalent approach is to take the kernel characterization, $\Hcurlz := \Ker(\widetilde{\TrT})$, as the primary definition of the space, as is done for instance in~\cite{Creo_2024}. Our assumption essentially states that these two common definitions coincide for the class of domains we consider.
\end{remark}

A further consequence of the Stokes-regularity assumption is the density of smooth functions in the full $\Hcurl$ space, a result analogous to ~\cite[Theorem 2.10, p.~34]{GIRAULT-1986}.

\begin{proposition}\label{DensHrot}
Let $\Omega$ be a bounded, Stokes-regular $H^1$-extension domain of $\R^3$. Then the space $\Dc(\overline\Omega)^3$ is dense in $\Hc(\rotv,\Omega)$.
\end{proposition}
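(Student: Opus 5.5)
We follow the Hahn--Banach strategy already used in Proposition~\ref{DensHdiv}: a subspace is dense in the Hilbert space $\Hcurl$ if and only if every continuous linear functional vanishing on it vanishes identically. So we take $l \in (\Hcurl)'$ with $l=0$ on $\Dc(\overline\Omega)$ and aim to show $l=0$.

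By the Riesz representation theorem there is $\lc \in \Hcurl$ such that
\begin{equation*}
\langle l,\uc\rangle = (\lc,\uc)_{\Lc^2(\Omega)} + (\zc,\rotv\uc)_{\Lc^2(\Omega)}, \qquad \zc := \rotv\lc \in \Lc^2(\Omega).
\end{equation*}
Vanishing of $l$ on $\Dc(\overline\Omega)$, in particular on $\Dc(\Omega)$, gives $\lc = -\rotv\zc$ in $\D'(\Omega)$. Hence $\zc \in \Hcurl$ with $\rotv\zc = -\lc$.

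Substituting this back, the vanishing on all of $\Dc(\overline\Omega)$ reads
\begin{equation*}
\forall \phic \in \Dc(\overline\Omega),\quad \int_\Omega(\zc\cdot\rotv\phic - \phic\cdot\rotv\zc)\dx = 0,
\end{equation*}
which is exactly the variational condition~\eqref{EqVarCond} for $\zc$. Since $\Omega$ is Stokes-regular (Assumption~\ref{AssumpHcurlz3D}), we conclude $\zc \in \Hcurlz$. Equivalently, by Proposition~\ref{propDensKerTrT}, $\zc \in \Ker(\widetilde{\TrT})$.

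It remains to show that $l$ vanishes on a general $\uc \in \Hcurl$. We write
\begin{equation*}
\langle l,\uc\rangle = -(\rotv\zc,\uc)_{\Lc^2(\Omega)} + (\zc,\rotv\uc)_{\Lc^2(\Omega)}.
\end{equation*}
Choose $\zc_k \in \Dc(\Omega)$ with $\zc_k\to\zc$ in $\Hcurl$. For each $k$, the expression $\int_\Omega(\zc_k\cdot\rotv\uc - \uc\cdot\rotv\zc_k)\dx$ vanishes, by the definition of the distributional curl of $\uc$ tested against the compactly supported smooth field $\zc_k$. Both terms are continuous in $\zc$ for the $\Hcurl$ norm (Cauchy--Schwarz), so passing to the limit gives $\langle l,\uc\rangle=0$.

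The main subtle point is this last step. We cannot use the Stokes formula~\eqref{EqStokesTrT} directly, since $\uc$ is only in $\Hcurl$, not $\Hc^1(\Omega)$. This is why we argue through the density of $\Dc(\Omega)$ in $\Hcurlz$, which is precisely what Stokes-regularity provides. Everything else is the routine Riesz/distribution computation.
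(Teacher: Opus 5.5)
Your proof is correct and follows the paper's argument essentially step by step. Both use Riesz representation, identify $\zc=\rotv\lc\in\Hcurl$, apply Stokes-regularity to obtain $\zc\in\Hcurlz$, and conclude $l=0$ by approximating $\zc$ with fields in $\Dc(\Omega)$. The only harmless difference is that you derive $\rotv\zc=-\lc$ directly in $\D'(\Omega)$, whereas the paper passes through the zero extension $\mathcal{Z}$ in $\D'(\R^3)$; that detour is not needed, since the Stokes-regularity assumption already handles the boundary.
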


\begin{proof}
The proof is similar to that of Proposition~\ref{DensHdiv}, with some slight changes at the end. Let $l$ be a continuous linear functional on $\Hcurl$ that vanishes on $\Dc(\overline\Omega)^3$. By the Riesz representation theorem, there exists $\lc \in \Hcurl$ such that for all $\uc \in \Hcurl$, 
$$\langle l, \uc \rangle = (\lc, \uc)_{\Hcurl}.$$
We define $\lc_{n+1} := \rotv \lc \in \Lc^2(\Omega)$. Because $l$ vanishes on $\Dc(\overline\Omega)^3$, the same logic as in previous proofs yields, in the sense of distributions on $\R^3$, $\mathcal{Z} \lc = - \rotv (\mathcal{Z}\lc_{n+1})$. This implies $\mathcal{Z}\lc_{n+1} \in \Hc(\rotv,\mathbb R^3)$, and thus $\lc_{n+1} \in \Hcurl$.\\
Since $l$ vanishes on $\Dc(\overline\Omega)^3$, we have the following equality:
\begin{align*}
    \forall \boldsymbol{\phi} \in \Dc(\overline\Omega)^3, \quad 0 &= \langle l,\boldsymbol{\phi}\rangle = (\lc, \boldsymbol{\phi})_{\Lc^2(\Omega)} + (\rotv \lc, \rotv \boldsymbol{\phi})_{\Lc^2(\Omega)}\\
    &= -(\rotv \lc_{n+1} , \boldsymbol{\phi})_{\Lc^2(\Omega)} + (\lc_{n+1}, \rotv \boldsymbol{\phi})_{\Lc^2(\Omega)}.
\end{align*}
It follows directly from Assumption~\ref{AssumpHcurlz3D} that $\lc_{n+1} \in \Hcurlz$.
According to Proposition~\ref{propDensKerTrT}, we can consider a sequence $(\boldsymbol\psi_k)_{k\in\mathbb N} \subset \Dc(\Omega)$ which converges to $\lc_{n+1}$ in the $\Hcurl$ norm. Using the definition of the weak curl for $\boldsymbol\psi_k \in \Dc(\Omega)$, taking the limit $k \to \infty$, and using the continuity of the inner product, we find:
\begin{equation*}
    \forall \boldsymbol{u} \in \Hcurl, \quad \langle l,\boldsymbol{u}\rangle = -(\rotv \lc_{n+1} , \boldsymbol{u})_{\Lc^2(\Omega)} + (\lc_{n+1}, \rotv \boldsymbol{u})_{\Lc^2(\Omega)} = 0.\qedhere
\end{equation*}
\end{proof}

\subsection{The Special Case of 2D Domains}
\label{sec:2D_tangential_trace}

We now show that for planar domains, Assumption~\ref{AssumpHcurlz3D} is not an assumption but a theorem that holds for all bounded $H^1$-extension domains. This is due to the special structure of the 2D curl operators. In what follows, we adopt a dedicated notation for vectors and vector-valued function spaces in $\R^2$, to distinguish them from their three-dimensional counterparts. For instance, we write $\ucd \in \Lcd^2(\Omega)$ in place of $\uc \in \Lc^2(\Omega)$.

\begin{definition}
Let $\Omega \subset \R^2$ be an open set. For a vector field $\ucd = (u_1, u_2)^T \in \Lcd^2(\Omega)$ and a scalar field $ v \in L^2(\Omega)$, we define their weak curls in the sense of distributions by:
\begin{align*}
    \rots\ucd &= \partial_1 u_2 - \partial_2 u_1, \\
    \rotv v &= \begin{pmatrix} \partial_2  v \\ -\partial_1  v \end{pmatrix}.
\end{align*}
We then define the space of square-integrable vector fields with square-integrable scalar curl as:
\begin{equation*}
    \Hcurls = \{ \ucd \in \Lcd^2(\Omega) \mid \rots\ucd \in L^2(\Omega) \}.
\end{equation*}
\end{definition}

The space $\Hcurls$ is the two-dimensional analogue of the curl energy space defined in~\eqref{EqHcurl}, and $\Hcurlsz := \overline{\Dcd(\Omega)}^{\Hcurls}$ is the closure of compactly supported smooth test functions.

It is essential to note that the 2D operators $\rots$ and $\rotv$ have a special structure that is not present in 3D. Both are directly related to the gradient and divergence via the 90° clockwise rotation matrix
$$ M = \begin{pmatrix} 0 & 1 \\ -1 & 0 \end{pmatrix}. $$
For any scalar field $v$, the vector curl is a rotated gradient:
\begin{equation}\label{EqCurlEqualNabla}
    \rotv v = (\partial_2  v, -\partial_1  v)^T = M \nabla v.
\end{equation}
Since $M$ is an orthogonal matrix, we have the pointwise identity $|\rotv v(x)| = |\nabla v(x)|$, which gives the equality of function spaces
\begin{equation}\label{EqHcurlEqualH1}
    H(\rotv,\Omega) = H^1(\Omega).
\end{equation}
Similarly, for any vector field $\ucd \in \Lcd^2(\Omega)$, the scalar curl is the opposite of the divergence of the rotated field:
\begin{equation}\label{EqCurlEqualDiv}
    \rots\ucd = -\div(M^T\ucd).
\end{equation}
Since $|M^T\ucd| = |\ucd|$ and $|\div(M^T\ucd)| = |\rots\ucd|$ pointwise, the map $\ucd \mapsto M^T\ucd$ is an isometric isomorphism
\begin{equation}\label{EqCurlDivIso}
    M^T : \Hcurls \rightarrow{} \Hdivs, \qquad \|M^T\ucd\|_{\Hdivs} = \|\ucd\|_{\Hcurls}.
\end{equation}
Since $M^T$ maps $\Dcd(\Omega)$ onto $\Dcd(\Omega)$ (being a constant invertible matrix), it also maps $\Hcurlsz$ onto $\Hdivsz$ isometrically. This reduction to the divergence setting allows us to obtain all the results of this subsection from their analogues in Section~\ref{sec:normal_trace}, as we now show.

\begin{theorem}\label{ThStokesTrT2D}
Let $\Omega$ be a bounded $H^1$-extension domain of $\mathbb{R}^2$.
For every $\ucd \in \Hcurls$, the map
\begin{equation}\label{EqDefTrT2D}
    l_u : w \in \Hb \mapsto \int_{\Omega} \ucd \cdot \rotvnesp(E w) \dx - \int_{\Omega} (E w)\,\rots\ucd \dx
    \;\in \R,
\end{equation}
where $E\colon \Hb \to H^1(\R^2)$ is any bounded linear extension operator, defines a bounded linear functional on $\Hb$ that is independent of the choice of $E$.  We denote this functional $\trT\ucd \in \Hbm$. It satisfies the generalized Stokes formula: for all $ v \in H^1(\Omega)$,
\begin{equation}\label{EqStokesTrT2D}
    \langle \trT\ucd,\Tr v\rangle_{\Hbm,\,\Hb} = \int_{\Omega} \ucd \cdot \rotv v\dx - \int_{\Omega}  v\,\rots\ucd\dx,
\end{equation}
and the operator $\trT\colon\Hcurls\to\Hbm$ is linear and continuous.
\end{theorem}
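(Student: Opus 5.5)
The plan is to reduce the statement to the normal trace theory of Section~\ref{sec:normal_trace} via the rotation $M$, following the remark just before the theorem. Given $\ucd \in \Hcurls$, set $\wc := M^T\ucd$, which lies in $\Hdivs$ by~\eqref{EqCurlDivIso}, with $\div\wc = -\rots\ucd$ by~\eqref{EqCurlEqualDiv}.

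\textbf{Step 1: rewrite the integrand.} For $v \in H^1(\Omega)$, I will use~\eqref{EqCurlEqualNabla} and the orthogonality of $M$. This gives
\[
\ucd\cdot\rotv v = \ucd\cdot M\nabla v = (M^T\ucd)\cdot\nabla v = \wc\cdot\nabla v .
\]
Consequently
\[
\int_\Omega \ucd\cdot\rotv v\dx - \int_\Omega v\,\rots\ucd\dx = \int_\Omega \wc\cdot\nabla v\dx + \int_\Omega (\div\wc)\,v\dx ,
\]
which is exactly the right-hand side of~\eqref{EqItByPn} for $\wc$.

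\textbf{Step 2: apply Theorem~\ref{ThNormalTrace}.} Choosing $v = (Ew)|_\Omega$, Step 1 shows that $l_u(w) = \langle \trn \wc, \Tr v\rangle = \langle \trn(M^T\ucd), w\rangle$, since $\Tr v = w$. The right-hand side depends only on $w$, so $l_u$ is independent of $E$; alternatively one can repeat the $H^1_0$-density argument from Definition~\ref{DefNormDerInt}. It follows that $l_u$ is bounded. I then define $\trT\ucd := \trn(M^T\ucd) \in \Hbm$. The Stokes formula~\eqref{EqStokesTrT2D} for an arbitrary $v\in H^1(\Omega)$ follows from~\eqref{EqItByPn} together with Step 1.

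\textbf{Step 3: linearity and continuity.} The map $\trT = \trn\circ M^T$ is a composition of linear maps. Its continuity follows from the two estimates
\[
\|\trn\wc\|_{\Hbm}\le\|\wc\|_{\Hdivs}
\quad\text{and}\quad
\|M^T\ucd\|_{\Hdivs}=\|\ucd\|_{\Hcurls},
\]
which together give $\|\trT\ucd\|_{\Hbm}\le\|\ucd\|_{\Hcurls}$.

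\textbf{Main obstacle.} The only real care needed is the sign and orientation bookkeeping in Step 1. One must check that the convention $\rotv v = (\partial_2 v,-\partial_1 v)^T$ pairs with $M^T$, and that the minus sign in $\rots\ucd = -\div(M^T\ucd)$ turns the Stokes combination into the $+$ combination of Green's formula. A short componentwise check settles both points. Independence of $E$ is then inherited from the well-definedness already established for $\trn$.
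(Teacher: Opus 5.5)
Your proposal is correct and is essentially the paper's own proof. Both use $\ucd\cdot\rotv v = (M^T\ucd)\cdot\nabla v$ and $\rots\ucd = -\div(M^T\ucd)$ to identify $\trT\ucd = \trn(M^T\ucd)$, then inherit independence of $E$, linearity and continuity (with constant $1$) from Theorem~\ref{ThNormalTrace} and the isometry~\eqref{EqCurlDivIso}. The only cosmetic difference is that the paper inserts $\ucd = MM^T\ucd$ and uses $M^TM=I$, whereas your first identity is just the definition of the transpose, so orthogonality of $M$ is not actually needed there.
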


\begin{proof}
Let $\ucd \in \Hcurls$ and $v \in H^1(\Omega)$. Using $\ucd = MM^T\ucd$, $\rotv v = M\nabla v$, and $\rots\ucd = -\div(M^T\ucd)$,
\begin{align*}
    \int_{\Omega} \ucd \cdot \rotv v\dx - \int_{\Omega} v\,\rots\ucd\dx &= \int_{\Omega} (MM^T\ucd) \cdot (M\nabla v)\dx + \int_{\Omega} v\,\div(M^T\ucd)\dx \\
    &= \int_{\Omega} (M^T\ucd) \cdot \nabla v\dx + \int_{\Omega} (\div(M^T\ucd))\, v\dx,
\end{align*}
where the first equality uses $M^TM = I$ (orthogonality of $M$). Since $M^T\ucd \in \Hdivs$ by~\eqref{EqCurlDivIso}, the right-hand side is exactly $\langle \trn(M^T\ucd), \Tr v \rangle_{\Hbm,\,\Hb}$ by Theorem~\ref{ThNormalTrace}. Hence $\trT\ucd = \trn(M^T\ucd)$, and the well-definedness, independence of $E$, linearity and continuity of $\trT$ follow from the corresponding properties of $\trn$ established in Theorem~\ref{ThNormalTrace}.
\end{proof}

The following proposition provides the full characterization of the kernel of this operator, proving that the density property holds without extra geometric assumptions.

\begin{proposition}\label{PropKerTrT2D}
Let $\Omega$ be a bounded $H^1$-extension domain of $\mathbb{R}^2$. Then
\begin{equation}\label{EqKerTrT2D}
    \Ker(\trT) = \Hcurlsz.
\end{equation}
\end{proposition}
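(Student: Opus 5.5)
The plan is to transport the statement to the divergence setting through the isometric isomorphism $M^T$ of~\eqref{EqCurlDivIso}, and then invoke Proposition~\ref{propHdiv}. No new analysis should be needed; the whole argument is bookkeeping with the identities already established for the rotation matrix $M$.

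\textbf{Step 1: identify the kernels.} The proof of Theorem~\ref{ThStokesTrT2D} shows that $\trT\ucd = \trn(M^T\ucd)$ for every $\ucd \in \Hcurls$. It follows that
\[
\ucd \in \Ker(\trT) \iff M^T\ucd \in \Ker(\trn),
\]
that is, $\Ker(\trT) = (M^T)^{-1}\bigl(\Ker(\trn)\bigr) = M\,\Ker(\trn)$. Here we use that $M^T$ is a bijection from $\Hcurls$ onto $\Hdivs$, with inverse $M$.

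\textbf{Step 2: use the divergence result.} Proposition~\ref{propHdiv} holds for any bounded $H^1$-extension domain, with no two-sided assumption, and in particular for $n=2$. It gives $\Ker(\trn) = \Hdivsz$. Therefore $\Ker(\trT) = M\,\Hdivsz$.

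\textbf{Step 3: transport the closure.} It remains to show that $M\,\Hdivsz = \Hcurlsz$. The map $M$ is an isometric isomorphism from $\Hdivs$ onto $\Hcurls$. It also maps $\Dcd(\Omega)$ bijectively onto itself, since it is constant and invertible. An isometric isomorphism sends the closure of a set to the closure of its image, so
\[
M\,\overline{\Dcd(\Omega)}^{\Hdivs} = \overline{M\Dcd(\Omega)}^{\Hcurls} = \overline{\Dcd(\Omega)}^{\Hcurls} = \Hcurlsz .
\]
This is the claim already noted after~\eqref{EqCurlDivIso}, and it concludes the proof.

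\textbf{Main obstacle.} The only point needing care is Step 1, namely checking that the identity $\trT\ucd = \trn(M^T\ucd)$ holds as an equality in $\Hbm$ for all $\ucd$, not merely on smooth fields. This follows because both functionals agree on every $\Tr v$ with $v \in H^1(\Omega)$, and $\Tr$ is surjective onto $\Hb$ by Theorem~\ref{ThTrisom}. Unlike the 3D situation, nothing like Assumption~\ref{AssumpHcurlz3D} is required, because the hard density argument has already been done in Proposition~\ref{propHdiv}.
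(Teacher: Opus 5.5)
Your proposal is correct and follows the paper's proof essentially verbatim. Like the paper, you use the identity $\trT\ucd = \trn(M^T\ucd)$ from the proof of Theorem~\ref{ThStokesTrT2D}, invoke Proposition~\ref{propHdiv} for $\Ker(\trn)=\Hdivsz$, and transport the closure of $\Dcd(\Omega)$ through the isometric isomorphism $M^T$. Your remark that this identity holds in $\Hbm$, because the two functionals agree on all of $\Hb=\Tr(H^1(\Omega))$, only makes explicit what the paper leaves implicit.
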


\begin{proof}
Since $\trT\ucd = \trn(M^T\ucd)$ by the proof of Theorem~\ref{ThStokesTrT2D}, $\ucd \in \Ker(\trT)$ if and only if $M^T\ucd \in \Ker(\trn)$. By Proposition~\ref{propHdiv}, $\Ker(\trn) = \Hdivsz$. Since $M^T : \Hcurls \to \Hdivs$ is an isometric isomorphism mapping $\Hcurlsz$ onto $\Hdivsz$ (as observed above),
\begin{equation*}
    \Ker(\trT) = M(\Hdivsz) = \Hcurlsz. \qedhere
\end{equation*}
\end{proof}

\begin{remark}
As shown in Proposition~\ref{propDensKerTrT}, equality~\eqref{EqKerTrT2D} is equivalent to the Stokes regularity of $\Omega$, since $\Omega$ is an $H^1$-extension domain. It means that every $H^1$-extension domain in $\R^2$ is Stokes-regular.
\end{remark}

As we did for the space $\Hdiv$, we finally give a density result for the whole $\Hcurls$. The two-sided hypothesis is necessary here, in contrast to Proposition~\ref{PropKerTrT2D} which required only the one-sided condition — mirroring exactly the structural difference between Proposition~\ref{DensHdiv} and Proposition~\ref{propHdiv}.

\begin{proposition}\label{PropDensCurl2D}
Let $\Omega$ be a bounded two-sided $H^1$-extension domain of $\mathbb{R}^2$. Then, the space $\Dcd(\overline\Omega)$ is dense in $\Hcurls$.
\end{proposition}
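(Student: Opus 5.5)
The plan is to reduce the statement to Proposition~\ref{DensHdiv}, using the rotation matrix $M$ exactly as in the proofs of Theorem~\ref{ThStokesTrT2D} and Proposition~\ref{PropKerTrT2D}. By~\eqref{EqCurlDivIso}, the map $\ucd \mapsto M^T\ucd$ is an isometric isomorphism from $\Hcurls$ onto $\Hdivs$. Its inverse is $\wc \mapsto M\wc$, and we have $\rots(M\wc) = -\div(M^TM\wc) = -\div\wc$. The whole argument therefore amounts to transporting the density of smooth fields through this isometry.

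The steps are as follows. First, fix $\ucd \in \Hcurls$ and set $\wc := M^T\ucd \in \Hdivs$. Next, since $\Omega$ is a bounded two-sided $H^1$-extension domain of $\R^2$, apply Proposition~\ref{DensHdiv} with $n=2$. This gives a sequence $(\wc_k)_k \subset (\D(\overline\Omega))^2$ with $\|\wc - \wc_k\|_{\Hdivs} \to 0$. Then define $\ucd_k := M\wc_k$.

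We must check that $\ucd_k \in \Dcd(\overline\Omega)$. This holds because $M$ is a constant matrix, so each component of $M\wc_k$ is a linear combination of restrictions to $\Omega$ of functions in $\D(\R^2)$, and is therefore itself such a restriction. Finally, using the isometry~\eqref{EqCurlDivIso} and linearity, we obtain
\begin{equation*}
\|\ucd - \ucd_k\|_{\Hcurls} = \|M^T(\ucd - \ucd_k)\|_{\Hdivs} = \|\wc - \wc_k\|_{\Hdivs} \xrightarrow[k\to\infty]{} 0,
\end{equation*}
which is the claimed density.

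There is no genuine obstacle in this argument. The analytic difficulty, namely the need for the two-sided hypothesis through the zero extension and Proposition~\ref{PropExtH10}, is entirely contained in Proposition~\ref{DensHdiv}. That is precisely why the two-sided assumption reappears here while it was not needed in Proposition~\ref{PropKerTrT2D}.

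The only points to verify carefully are the pointwise identities $|M^T\ucd| = |\ucd|$ and $\div(M^T\ucd) = -\rots\ucd$ in the distributional sense. Both follow by testing against $\D(\Omega)$ and using the orthogonality of the constant matrix $M$.
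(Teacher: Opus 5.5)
Your proof is correct and follows the same route as the paper. You transport the density result of Proposition~\ref{DensHdiv} (with $n=2$) through the isometric isomorphism $M^T\colon\Hcurls\to\Hdivs$ of~\eqref{EqCurlDivIso}, using that the constant matrix $M$ preserves $\Dcd(\overline\Omega)$; you simply write out the approximating sequence $\ucd_k = M\wc_k$, which the paper leaves implicit.
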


\begin{proof}
Since $M^T$ maps $\Dcd(\overline\Omega)$ onto $\Dcd(\overline\Omega)$ and $\Hcurls$ isometrically onto $\Hdivs$, the density of $\Dcd(\overline\Omega)$ in $\Hcurls$ is equivalent to the density of $\Dcd(\overline\Omega)$ in $\Hdivs$, which is Proposition~\ref{DensHdiv}.
\end{proof}

\subsection{\texorpdfstring{$\Hc^1(\Omega)$-framework and the space $\Hbcp$}{H1-framework and Bperp}}
\label{SubsecTrTH1}

In the Lipschitz case, Buffa, Costabel and Sheen~\cite{BUFFA-2002} define the image of the $\Hc^1(\Omega)$ tangential trace, and endow it with the quotient norm. Adopting the same strategy, we define the following space.

\begin{definition}\label{DefBperp}
Let $\Omega$ be a bounded $H^1$-extension domain of $\R^3$. We define
\begin{equation}\label{EqDefBperp}
    \Hbcp := \TrT\bigl(\Hc^1(\Omega)\bigr) \subset \Hbcm,
\end{equation}
endowed with the quotient norm
\begin{equation}\label{EqNormBperp}
    \|\fc\|_{\Hbcp} := \inf_{\substack{\vc \in \Hc^1(\Omega) \\ \TrT\vc = \fc}} \|\vc\|_{\Hc^1(\Omega)}.
\end{equation}
\end{definition}

We now establish the fundamental properties of this space.

\begin{theorem}\label{ThBperp}
Let $\Omega$ be a bounded $H^1$-extension domain of $\R^3$, and let $\Kc := \Ker(\TrT)$ be the kernel of the tangential trace on $\Hc^1(\Omega)$.
\begin{enumerate}
    \item For every functional $\fc \in \Hbcp$, there exists a unique potential $\wc_f \in \Kc^{\perp_{\Hc^1}}$ such that $\TrT\wc_f = \fc$. This $\wc_f$ is the unique minimizer of the $\Hc^1(\Omega)$-norm among all pre-images of $\fc$:
    \begin{equation}\label{EqMinWcf}
        \|\wc_f\|_{\Hc^1(\Omega)} = \min_{\substack{\vc\in\Hc^1(\Omega)\\\TrT\vc=\fc}} \|\vc\|_{\Hc^1(\Omega)}.
    \end{equation}

    \item The space $\Hbcp$, endowed with the inner product
    \begin{equation}\label{EqInnerBperp}
        (\fc,\gc)_{\Hbcp} := (\wc_f, \wc_g)_{\Hc^1(\Omega)},
    \end{equation}
    where $\wc_f$ and $\wc_g$ are the unique optimal lifts described above, is a Hilbert space. The norm induced by this inner product coincides with the quotient norm from \eqref{EqNormBperp}.

    \item The restriction of the trace operator $\TrT$ to the subspace $\Kc^{\perp_{\Hc^1}}$ is a bijective isometry from $(\Kc^{\perp_{\Hc^1}}, \|\cdot\|_{\Hc^1(\Omega)})$ to $(\Hbcp, \|\cdot\|_{\Hbcp})$:
    \begin{equation}\label{EqIsomTrT}
        \forall \wc \in \Kc^{\perp_{\Hc^1}}, \quad 
        \|\TrT\wc\|_{\Hbcp} = \|\wc\|_{\Hc^1(\Omega)}.
    \end{equation}

    \item The space of optimal lifts is characterized by a weak boundary value problem:
    \begin{equation}\label{EqV1T}
    \begin{aligned}
    \Kc^{\perp_{\Hc^1}}
    = \Bigl\{ \wc \in \Hc^1(\Omega) \;\Bigm|\;&
    -\Delta \wc + \wc = \mathbf{0} \text{ weakly in } \Omega, \\
    &\forall \vc \in \Kc,\; \left\langle \frac{\partial \wc}{\partial \nc}, \Trc \vc \right\rangle_{\Hbcm,\,\Hbc} = 0 \Bigr\}.
    \end{aligned}
    \end{equation}
\end{enumerate}
\end{theorem}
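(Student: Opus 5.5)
The plan is to mimic the proof of Theorem~\ref{ThTrisom} for the scalar trace, with $\TrT$ in place of $\Tr$ and $\Kc$ in place of $H^1_0(\Omega)$.

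The first step is to show that $\Kc$ is closed in $\Hc^1(\Omega)$. By Theorem~\ref{ThmTrTHcurl} and inequality~\eqref{IneqCcurl}, the map $\TrT:\Hc^1(\Omega)\to\Hbcm$ is continuous, with
\[
\|\TrT\vc\|_{\Hbcm}\le \sqrt2\,\|\vc\|_{\Hcurl}\le 2\|\vc\|_{\Hc^1(\Omega)}.
\]
Hence $\Kc$ is a closed subspace, and $\Hc^1(\Omega)=\Kc\oplus\Kc^{\perp_{\Hc^1}}$.

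Next I prove items~1 and~3. Given $\fc=\TrT\vc\in\Hbcp$, I set $\wc_f:=P\vc$, where $P$ is the orthogonal projection onto $\Kc^{\perp_{\Hc^1}}$. Since $\vc-\wc_f\in\Kc$, we have $\TrT\wc_f=\fc$. The set of all preimages of $\fc$ is the affine space $\wc_f+\Kc$. For $\kc\in\Kc$, Pythagoras gives $\|\wc_f+\kc\|^2=\|\wc_f\|^2+\|\kc\|^2$, so $\wc_f$ is the unique minimizer; in particular the infimum in~\eqref{EqNormBperp} is attained, which is~\eqref{EqMinWcf}. Uniqueness within $\Kc^{\perp}$ holds because two such lifts differ by an element of $\Kc\cap\Kc^\perp=\{0\}$. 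Consequently $\TrT|_{\Kc^\perp}$ is injective, and it is onto $\Hbcp$ by definition. It is isometric because $\wc\in\Kc^\perp$ is its own optimal lift, so $\|\TrT\wc\|_{\Hbcp}=\|\wc\|_{\Hc^1(\Omega)}$. This is item~3.

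For item~2, I note that $\fc\mapsto\wc_f$ is linear, since it is the inverse of the linear bijection $\TrT|_{\Kc^\perp}$. Therefore~\eqref{EqInnerBperp} is a well-defined, symmetric, positive definite bilinear form, and its norm is $\|\wc_f\|_{\Hc^1(\Omega)}$, which equals the quotient norm by item~1. Completeness transfers through the isometric bijection, because $\Kc^\perp$ is a closed subspace of a Hilbert space.

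Item~4 is the only step requiring care. I take $\wc\in\Kc^\perp$, so that $(\wc,\vc)_{\Hc^1}=0$ for all $\vc\in\Kc$. By Lemma~\ref{InclH10TrtTrn}, $\Hc^1_0(\Omega)\subset\Kc$, so testing against $\Dc(\Omega)$ gives $-\Delta\wc+\wc=\mathbf 0$ in $\D'$. Each component of $\wc$ then has $\Delta w_i=w_i\in L^2(\Omega)$, so it lies in $H^1_\Delta(\Omega)$ and its weak normal derivative is defined by Definition~\ref{DefNormDerInt}, componentwise. Green's formula~\eqref{EqGreenInt} gives, for every $\vc\in\Hc^1(\Omega)$,
\[
\Bigl\langle\frac{\partial\wc}{\partial\nc},\Trc\vc\Bigr\rangle=(\Delta\wc,\vc)_{\Lc^2}+(\nabla\wc,\nabla\vc)_{\Lc^2}=(\wc,\vc)_{\Hc^1(\Omega)}.
\]
This identity is the crux. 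Both inclusions follow from it. If $\wc\in\Kc^\perp$, the right-hand side vanishes for $\vc\in\Kc$, so $\wc$ belongs to the set in~\eqref{EqV1T}. Conversely, if $\wc$ is $1$-harmonic and satisfies the boundary condition in~\eqref{EqV1T}, the identity shows $(\wc,\vc)_{\Hc^1}=0$ for all $\vc\in\Kc$, i.e. $\wc\in\Kc^\perp$.

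I expect no real obstacle. The only subtle points are the closedness of $\Kc$, which follows from the continuity just established, and the justification of componentwise normal derivatives via $\wc\in H^1_\Delta$.
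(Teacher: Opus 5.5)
Your proposal is correct and follows essentially the same route as the paper. Items~1--3 rest on the closedness of $\Kc$, which comes from the continuity of $\TrT$, and on the orthogonal decomposition $\Hc^1(\Omega)=\Kc\oplus\Kc^{\perp_{\Hc^1}}$; for item~4 you test against $\Dc(\Omega)\subset\Hc^1_0(\Omega)\subset\Kc$ and then apply Green's formula~\eqref{EqGreenInt} in both directions, exactly as the paper does. The only difference is that you spell out details the paper leaves to ``standard quotient Hilbert space theory'': the Pythagoras argument and the explicit continuity constant.
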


\begin{remark}\label{RemMinBperp}
We note that the infimum defining the quotient norm of $\Hbcp$ is achieved. This minimum is uniquely realized by the optimal lift $\wc_f \in \Kc^{\perp_{\Hc^1}}$.
\end{remark}

\begin{proof}
\noindent\emph{Points 1, 2, and 3.} 
Since the trace operator $\TrT: \Hc^1(\Omega) \to \Hbcm$ is continuous, its kernel $\Kc = \Ker(\TrT)$ is a closed subspace of the Hilbert space $\Hc^1(\Omega)$. Points 1, 2, and 3 are then direct consequences of standard quotient Hilbert space theory. Specifically, the orthogonal decomposition $\Hc^1(\Omega) = \Kc \oplus \Kc^{\perp_{\Hc^1}}$ guarantees that for each trace $\fc \in \Hbcp$, there is a unique optimal lift $\wc_f \in \Kc^{\perp_{\Hc^1}}$ realizing the minimum in \eqref{EqMinWcf}. Furthermore, the restriction $\TrT|_{\Kc^{\perp_{\Hc^1}}}$ is a bijective isometry from the complete space $\Kc^{\perp_{\Hc^1}}$ onto $(\Hbcp, \|\cdot\|_{\Hbcp})$ by construction under the quotient norm, which immediately transfers the Hilbert space structure to $\Hbcp$.

\medskip
\noindent\emph{Point 4.} 
Let $\uc \in \Kc^{\perp_{\Hc^1}}$, so $(\uc,\vc)_{\Hc^1(\Omega)} = 0$ for all $\vc \in \Kc$. Since $\Dc(\Omega) \subset \Hc^1_0(\Omega) \subset \Kc$ by Lemma~\ref{InclH10TrtTrn}, we also have $(\uc,\vc)_{\Hc^1(\Omega)} = 0$ for all $\vc \in \Dc(\Omega)$. Expanding the $\Hc^1$ inner product yields:
\begin{equation*}
    0 = (\uc,\vc)_{\Hc^1(\Omega)} = (\nabla\uc,\nabla\vc)_{\Lc^2(\Omega)} + (\uc,\vc)_{\Lc^2(\Omega)} = \langle -\Delta\uc+\uc,\vc\rangle.
\end{equation*}
This holds for all $\vc\in\Dc(\Omega)$, hence $-\Delta\uc+\uc=\mathbf{0}$ weakly in $\Omega$. Consequently, $\Delta\uc=\uc\in\Lc^2(\Omega)$, and the weak normal derivative $\frac{\partial\uc}{\partial\nc}\in\Hbcm$ is well-defined via Definition~\ref{DefNormDerInt}. 

Applying Green's formula for any $\vc \in \Kc$, and using the weak equation $-\Delta\uc+\uc=\mathbf{0}$, we obtain:
\begin{align*}
    0 &= (\uc,\vc)_{\Hc^1(\Omega)} = \langle -\Delta\uc+\uc,\vc\rangle + \left\langle \frac{\partial\uc}{\partial\nc}, \Trc\vc \right\rangle_{\Hbcm,\, \Hbc}\\
    &= \left\langle \frac{\partial\uc}{\partial\nc}, \Trc\vc \right\rangle_{\Hbcm,\, \Hbc},
\end{align*}
which proves the weak boundary condition. 

Conversely, if $\uc\in\Hc^1(\Omega)$ satisfies $-\Delta\uc+\uc=\mathbf{0}$ weakly and $\left\langle\frac{\partial\uc}{\partial\nc},\Trc\vc\right\rangle = 0$ for all $\vc \in \Kc$, Green's formula directly yields $(\uc,\vc)_{\Hc^1(\Omega)} = 0$ for all $\vc \in \Kc$, confirming that $\uc \in \Kc^{\perp_{\Hc^1}}$.
\end{proof}

\section{Tangential component trace operator}\label{sec:tangential_component}

In this section we define and study the (generalized) tangential component trace operator $\widetilde\piT$, the abstract analogue of the operator from~\cite[Section~2.2.3]{Ciarlet_2024}. We follow the strategy of~\cite{BUFFA-2002}: in the Lipschitz case, the tangential component trace operator is first defined on $\Hc^1(\Omega)$ as a geometric projection, and then extended to $\Hcurl$ using the Green formula (cf.~\cite[Eq.~(27)]{BUFFA-2002}). In our abstract setting, we cannot define $\piT$ geometrically. Instead, we take this formula as the definition, with the target space replaced by the abstract space $\Hbcp$ established in Section~\ref{SubsecTrTH1}.

\subsection{\texorpdfstring{$\Hc^1(\Omega)$-framework and the space $\Hbct$}{H1-framework and BT}}
\label{SubsecPiTH1}

Given $\uc\in\Hc^1(\Omega)$, we want to define $\piT\uc$ as a linear functional on $\Hbcp = \TrT(\Hc^1(\Omega))$.
The natural candidate, by analogy with formula~\cite[Eq.~(27)]{BUFFA-2002}, is to set for all $\vc \in \Hc^1(\Omega)$:
\begin{equation}\label{EqCandidatePiT}
    \langle \piT\uc, \TrT\vc \rangle_{\Hbcpm,\, \Hbcp} := -b(\uc, \vc),
\end{equation}
where $b$ is defined by the Stokes integral, see Equation~\eqref{EqDefBilinearForm_b}. Before adopting this as a definition, we must verify that
the right-hand side depends on $\vc$ only through $\TrT\vc$, i.e., that two different
choices of pre-image give the same value. Let $\vc_1, \vc_2 \in \Hc^1(\Omega)$ satisfy
$\TrT\vc_1 = \TrT\vc_2$. Then $\vc_1 - \vc_2 \in \Kc = \Ker(\TrT)$. For any element
$\vc^0 \in \Kc$, the Stokes formula~\eqref{EqStokesTrT} gives
\begin{equation*}
    \forall \wc \in \Hc^1(\Omega), \quad b(\vc^0, \wc) = \langle \TrT\vc^0, \Trc\wc \rangle_{\Hbcm,\, \Hbc} = 0
\end{equation*}
since $\TrT\vc^0 = \mathbf{0}$. By antisymmetry of $b$, this implies
\begin{equation}\label{EqNullitybuv}
    \forall \uc \in \Hc^1(\Omega), \, \forall \vc^0 \in \Kc, \quad
    b(\uc, \vc^0) = -b(\vc^0, \uc) = 0.
\end{equation}
Applying this with $\vc^0 = \vc_1 - \vc_2$:
\begin{equation*}
    b(\uc, \vc_1) - b(\uc, \vc_2) = b(\uc, \vc_1 - \vc_2) = 0,
\end{equation*}
so the right-hand side of~\eqref{EqCandidatePiT} is indeed independent of the choice
of pre-image of $\TrT\vc$. Formula~\eqref{EqCandidatePiT} therefore defines $\piT\uc$
unambiguously as a linear functional on $\Hbcp$.

\begin{definition}\label{DefPiTH1}
Let $\Omega$ be a bounded $H^1$-extension domain of $\R^3$. For $\uc \in \Hc^1(\Omega)$,
we define $\piT\uc \in \Hbcpm$ by
\begin{equation}\label{EqDefPiTH1}
    \forall \fc \in \Hbcp, \quad
    \langle \piT\uc, \fc \rangle_{\Hbcpm,\, \Hbcp} := -b(\uc, \wc_f),
\end{equation}
where $\wc_f \in \Kc^{\perp_{\Hc^1}}$ is the unique optimal lift of $\fc$ from
Theorem~\ref{ThBperp}.
\end{definition}

The use of the optimal lift $\wc_f$ in~\eqref{EqDefPiTH1} is simply a canonical choice
of pre-image: by the well-definedness argument above, any $\vc \in \Hc^1(\Omega)$ with
$\TrT\vc = \fc$ gives the same value $-b(\uc, \vc)$.

\begin{proposition}\label{PropPiTH1}
Let $\Omega$ be a bounded $H^1$-extension domain of $\R^3$. The operator $\piT : \Hc^1(\Omega) \to \Hbcpm$ is linear and continuous, with 
$$\|\piT\uc\|_{\Hbcpm} \leq \sqrt{2} \, \|\uc\|_{\Hc^1(\Omega)}.$$ 
For all $\uc, \vc \in \Hc^1(\Omega)$:
\begin{equation}\label{EqPiTuTrTv}
    b(\uc, \vc) = -\langle \piT\uc, \TrT\vc \rangle_{\Hbcpm,\, \Hbcp} = \langle \piT\vc, \TrT\uc \rangle_{\Hbcpm,\, \Hbcp}.
\end{equation}
\end{proposition}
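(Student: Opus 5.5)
Linearity of $\piT$ is immediate from Definition~\ref{DefPiTH1}: for fixed $\fc\in\Hbcp$, the map $\uc\mapsto -b(\uc,\wc_f)$ is linear, because $b$ is bilinear and the optimal lift $\wc_f$ depends only on $\fc$. The same definition also shows that $\piT\uc$ is linear in $\fc$, since $\fc\mapsto\wc_f$ is linear. This follows from point 3 of Theorem~\ref{ThBperp}, where $\wc_f$ is the image of $\fc$ under the inverse of the linear bijective isometry $\TrT|_{\Kc^{\perp_{\Hc^1}}}$.

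For the continuity bound, I fix $\uc\in\Hc^1(\Omega)$ and $\fc\in\Hbcp$, and bound $|b(\uc,\wc_f)|$ by Cauchy--Schwarz as in the proof of Theorem~\ref{ThmTrTHcurl}:
\[
|b(\uc,\wc_f)|\le \|\uc\|_{\Lc^2}\|\rotv\wc_f\|_{\Lc^2}+\|\wc_f\|_{\Lc^2}\|\rotv\uc\|_{\Lc^2}\le \|\uc\|_{\Hcurl}\|\wc_f\|_{\Hcurl}.
\]
Applying~\eqref{IneqCcurl} to both factors gives $\|\uc\|_{\Hcurl}\|\wc_f\|_{\Hcurl}\le 2\|\uc\|_{\Hc^1}\|\wc_f\|_{\Hc^1}$, which yields the constant $2$ rather than the stated $\sqrt2$. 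To recover $\sqrt2$ I would avoid using~\eqref{IneqCcurl} on both factors and instead exploit the finer pointwise estimate $|\rotv\vc|^2\le 2|\nabla\vc|^2$. This gives $\|\rotv\vc\|_{\Lc^2}\le\sqrt2\,\|\nabla\vc\|_{\Lc^2}$, so
\[
|b(\uc,\wc)|\le \sqrt2\bigl(\|\uc\|_{\Lc^2}\|\nabla\wc\|_{\Lc^2}+\|\wc\|_{\Lc^2}\|\nabla\uc\|_{\Lc^2}\bigr)\le\sqrt2\,\|\uc\|_{\Hc^1}\|\wc\|_{\Hc^1},
\]
where the last step is Cauchy--Schwarz in $\R^2$ applied to the vectors $(\|\uc\|,\|\nabla\uc\|)$ and $(\|\nabla\wc\|,\|\wc\|)$. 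Using $\|\wc_f\|_{\Hc^1}=\|\fc\|_{\Hbcp}$ from point 3 of Theorem~\ref{ThBperp}, I then take the supremum over $\fc$ to obtain the stated bound. Keeping track of constants here is the only delicate point of the proof; everything else is bookkeeping.

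For~\eqref{EqPiTuTrTv}, let $\uc,\vc\in\Hc^1(\Omega)$ and set $\fc=\TrT\vc\in\Hbcp$. The well-definedness argument preceding Definition~\ref{DefPiTH1} shows, via~\eqref{EqNullitybuv}, that $b(\uc,\vc)=b(\uc,\wc_f)$, because $\vc-\wc_f\in\Kc$. Hence $\langle\piT\uc,\TrT\vc\rangle=-b(\uc,\wc_f)=-b(\uc,\vc)$, which is the first equality. Exchanging the roles of $\uc$ and $\vc$ gives $\langle\piT\vc,\TrT\uc\rangle=-b(\vc,\uc)$. Since $b(\vc,\uc)=-b(\uc,\vc)$ directly from~\eqref{EqDefBilinearForm_b}, this equals $b(\uc,\vc)$, which is the second equality.
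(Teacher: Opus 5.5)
Your proof is correct and follows essentially the same route as the paper. Linearity comes from bilinearity of $b$ and linearity of $\fc\mapsto\wc_f$, the bound from Cauchy--Schwarz together with the isometry $\|\wc_f\|_{\Hc^1(\Omega)}=\|\fc\|_{\Hbcp}$, the first identity from the pre-image independence \eqref{EqNullitybuv}, and the second from antisymmetry of $b$. Your pointwise bound $|\rotv\vc|^2\le 2|\nabla\vc|^2$ combined with Cauchy--Schwarz in $\R^2$ actually justifies the constant $\sqrt2$, which the paper only asserts; chaining \eqref{IneqCcurl} on both factors would give $2$.
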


\begin{proof}
Linearity is immediate from linearity of $b$ and of $\fc \mapsto \wc_f$. Continuity is a consequence of the Cauchy-Schwarz inequality and the isometry~\eqref{EqIsomTrT}:
\begin{align*}
    |\langle \piT\uc, \fc \rangle_{\Hbcpm,\, \Hbcp}| &= |b(\uc, \wc_f)|\\
    &\leq \sqrt{2} \, \|\uc\|_{\Hc^1(\Omega)} \|\wc_f\|_{\Hc^1(\Omega)}\\
    &\leq \sqrt{2} \, \|\uc\|_{\Hc^1(\Omega)} \|\fc\|_{\Hbcp}.
\end{align*}
We now prove Equation~\eqref{EqPiTuTrTv}. The first equality follows from Definition~\ref{DefPiTH1} and~\eqref{EqCandidatePiT}. The second follows by antisymmetry $b(\uc, \vc) = -b(\vc, \uc)$ and the first equality applied to the pair $(\vc, \uc)$.
\end{proof}

Proposition~\ref{PropPiTH1} gives the abstract analogue of equation~(27) of~\cite{BUFFA-2002} for $\uc, \vc \in \Hc^1(\Omega)$. 
Next, we establish that $\TrT$ and $\piT$ have the same kernel on $\Hc^1(\Omega)$, the abstract analogue of~\cite[Eq.~(8)]{BUFFA-2002}.

\begin{proposition}\label{PropSameKer}
Let $\Omega$ be a bounded $H^1$-extension domain of $\R^3$. On $\Hc^1(\Omega)$:
\begin{equation}\label{EqSameKer}
    \Ker(\TrT) = \Ker(\piT).
\end{equation}
\end{proposition}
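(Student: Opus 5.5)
We prove the two inclusions separately, using only the identity \eqref{EqPiTuTrTv} of Proposition~\ref{PropPiTH1}, the generalized Stokes formula \eqref{EqStokesTrT}, and the density of $\TrT(\Hc^1(\Omega))$ in $\Hbcp$. The latter is trivial, since $\Hbcp$ is by definition exactly this image. The key observation is that both kernels can be characterized by the vanishing of $b(\uc,\cdot)$ on all of $\Hc^1(\Omega)$.

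\emph{Inclusion $\Ker(\TrT)\subset\Ker(\piT)$.} Let $\uc\in\Hc^1(\Omega)$ with $\TrT\uc=\mathbf 0$, and let $\fc\in\Hbcp$ with optimal lift $\wc_f$. By Definition~\ref{DefPiTH1} and \eqref{EqNullitybuv}, applied with $\vc^0=\uc\in\Kc$,
\[
\langle\piT\uc,\fc\rangle_{\Hbcpm,\,\Hbcp}=-b(\uc,\wc_f)=b(\wc_f,\uc)=0 .
\]
Explicitly, $b(\uc,\wc_f)=\langle\TrT\uc,\Trc\wc_f\rangle_{\Hbcm,\,\Hbc}=0$ by \eqref{EqStokesTrT}. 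Hence $\piT\uc=\mathbf 0$.

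\emph{Inclusion $\Ker(\piT)\subset\Ker(\TrT)$.} Let $\uc\in\Hc^1(\Omega)$ with $\piT\uc=\mathbf 0$ in $\Hbcpm$. Every $\vc\in\Hc^1(\Omega)$ satisfies $\TrT\vc\in\Hbcp$, so the first equality in \eqref{EqPiTuTrTv} gives
\[
b(\uc,\vc)=-\langle\piT\uc,\TrT\vc\rangle_{\Hbcpm,\,\Hbcp}=0\qquad\text{for all }\vc\in\Hc^1(\Omega).
\]
On the other hand, the Stokes formula \eqref{EqStokesTrT} gives $b(\uc,\vc)=\langle\TrT\uc,\Trc\vc\rangle_{\Hbcm,\,\Hbc}$. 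Since $\Trc:\Hc^1(\Omega)\to\Hbc$ is surjective (Theorem~\ref{ThTrisom}, componentwise), the functional $\TrT\uc\in\Hbcm$ vanishes on all of $\Hbc$, that is, $\TrT\uc=\mathbf 0$.

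The only delicate point is keeping track of which duality is in use. $\piT\uc$ lives in the dual of $\Hbcp\subset\Hbcm$, whereas $\TrT\uc$ lives in $\Hbcm$, the dual of $\Hbc$. The bridge between them is the bilinear form $b(\uc,\cdot)$ on $\Hc^1(\Omega)$: vanishing of $\piT\uc$ is equivalent to $b(\uc,\cdot)\equiv0$, because every pre-image $\vc$ is admissible by the well-definedness argument preceding Definition~\ref{DefPiTH1}. Vanishing of $\TrT\uc$ is equivalent to the same condition, by the surjectivity of $\Trc$. Once both kernels are seen to equal $\{\uc\in\Hc^1(\Omega)\mid b(\uc,\cdot)=0\}$, the statement follows, so no real obstacle remains.
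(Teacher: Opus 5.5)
Your proof is correct and follows essentially the same argument as the paper. The inclusion $\Ker(\TrT)\subset\Ker(\piT)$ comes from the Stokes formula (equivalently \eqref{EqNullitybuv}) tested against the optimal lift $\wc_f$. The reverse inclusion combines the first equality in \eqref{EqPiTuTrTv}, the Stokes formula \eqref{EqStokesTrT}, and the surjectivity of $\Trc$ onto $\Hbc$.
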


\begin{proof}
Let $\vc \in \Kc = \Ker(\TrT)$, so $\TrT\vc = \mathbf{0}$. Let $\fc \in \Hbcp$ with optimal lift $\wc_f \in \Kc^{\perp_{\Hc^1}}$. By Definition~\ref{DefPiTH1}:
\begin{equation*}
    \langle \piT\vc, \fc \rangle_{\Hbcpm,\, \Hbcp} = -b(\vc, \wc_f).
\end{equation*}
Since $\vc \in \Kc$ and $\wc_f \in \Hc^1(\Omega)$, property~\eqref{EqNullitybuv} gives $b(\vc, \wc_f) = 0$. As this holds for all $\fc \in \Hbcp$, we conclude $\piT\vc = \mathbf{0}$.

Now, let $\vc \in \Hc^1(\Omega)$ satisfy $\piT\vc = \mathbf{0}$. By the Stokes formula~\eqref{EqStokesTrT}, for all $\uc \in \Hc^1(\Omega)$:
\begin{equation*}
    \langle \TrT\vc, \Trc\uc \rangle_{\Hbcm,\, \Hbc} = b(\vc, \uc).
\end{equation*}
By formula~\eqref{EqPiTuTrTv}:
\begin{equation*}
    b(\vc, \uc) = -\langle \piT\vc, \TrT\uc \rangle_{\Hbcpm,\, \Hbcp} = 0,
\end{equation*}
since $\piT\vc = \mathbf{0}$ by hypothesis. Hence $\langle \TrT\vc, \Trc\uc \rangle_{\Hbcm,\, \Hbc} = 0$ for all $\uc \in \Hc^1(\Omega)$. Since $\Trc : \Hc^1(\Omega) \to \Hbc$ is surjective by Theorem~\ref{ThTrisom}, this gives $\TrT\vc = \mathbf{0}$.
\end{proof}

Proposition~\ref{PropSameKer} shows that $\piT$ and $\TrT$ share the same kernel $\Kc$. The only property of $\TrT$ that was used in Theorem~\ref{ThBperp} to construct the Hilbert structure on $\Hbcp$ was the fact that $\Kc$ is a closed subspace of $\Hc^1(\Omega)$, which gives the orthogonal decomposition $\Hc^1(\Omega) = \Kc \oplus \Kc^{\perp_{\Hc^1}}$ and hence the unique optimal lifts. Since $\Ker(\piT) = \Kc$ is the same closed subspace, the entire argument of Theorem~\ref{ThBperp} applies verbatim to $\piT$, yielding a Hilbert space structure on $\Hbct := \piT(\Hc^1(\Omega))$.

\begin{definition}\label{DefBT}
We define
\begin{equation}\label{EqDefBT}
    \Hbct := \piT\bigl(\Hc^1(\Omega)\bigr) \subset \Hbcpm,
\end{equation}
endowed with the quotient norm 
\begin{equation}\label{EqQuotNormBT}
    \|\fc\|_{\Hbct} := \inf_{\substack{\vc \in \Hc^1(\Omega) \\ \piT\vc = \fc}} \|\vc\|_{\Hc^1(\Omega)}.
\end{equation}
\end{definition}

\begin{theorem}\label{ThBT}
Let $\Omega$ be a bounded $H^1$-extension domain of $\R^3$. The space $\Hbct$, endowed with the inner product
\begin{equation}\label{EqInnerBT}
    (\fc, \gc)_{\Hbct} := (\wc_f, \wc_g)_{\Hc^1(\Omega)},
\end{equation}
where $\wc_f, \wc_g \in \Kc^{\perp_{\Hc^1}}$ are the unique elements with $\piT\wc_f = \fc$ and $\piT\wc_g = \gc$, is a Hilbert space. The restriction of $\piT$ to $\Kc^{\perp_{\Hc^1}}$ is a bijective isometry:
\begin{equation}\label{EqIsomPiT}
    \forall \wc \in \Kc^{\perp_{\Hc^1}}, \quad \|\piT\wc\|_{\Hbct} = \|\wc\|_{\Hc^1(\Omega)}.
\end{equation}
Moreover, for every $\vc \in \Hc^1(\Omega)$, the unique element $\wc \in \Kc^{\perp_{\Hc^1}}$ satisfying $\piT\wc = \piT\vc$ is the same as the unique element satisfying $\TrT\wc = \TrT\vc$. In other words, the optimal lifts for $\piT$ and for $\TrT$ coincide on $\Kc^{\perp_{\Hc^1}}$.
\end{theorem}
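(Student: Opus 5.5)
The plan is to repeat the quotient-Hilbert-space argument of Theorem~\ref{ThBperp}, with $\TrT$ replaced by $\piT$. The one new input is Proposition~\ref{PropSameKer}, which says $\Ker(\piT)=\Kc$.

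First, by Proposition~\ref{PropPiTH1}, $\piT:\Hc^1(\Omega)\to\Hbcpm$ is linear and continuous. Hence $\Ker(\piT)$ is closed in $\Hc^1(\Omega)$, and by Proposition~\ref{PropSameKer} it equals $\Kc$. This gives the orthogonal decomposition $\Hc^1(\Omega)=\Kc\oplus\Kc^{\perp_{\Hc^1}}$.

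For $\fc\in\Hbct$, I pick any $\vc$ with $\piT\vc=\fc$ and let $\wc_f$ be the orthogonal projection of $\vc$ onto $\Kc^{\perp_{\Hc^1}}$. Since $\vc-\wc_f\in\Kc=\Ker(\piT)$, we get $\piT\wc_f=\fc$. This $\wc_f$ is unique: two such elements differ by something in $\Kc\cap\Kc^{\perp_{\Hc^1}}=\{\mathbf 0\}$. Every pre-image of $\fc$ has the form $\wc_f+\kc$ with $\kc\in\Kc$, so Pythagoras gives $\|\wc_f+\kc\|^2_{\Hc^1}=\|\wc_f\|^2_{\Hc^1}+\|\kc\|^2_{\Hc^1}$. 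Therefore the infimum in \eqref{EqQuotNormBT} is attained exactly at $\wc_f$, and $\|\fc\|_{\Hbct}=\|\wc_f\|_{\Hc^1(\Omega)}$.

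This shows that $\piT|_{\Kc^{\perp_{\Hc^1}}}$ is a linear bijection onto $\Hbct$ which preserves norms, which is \eqref{EqIsomPiT}. The bilinear form \eqref{EqInnerBT} is the pull-back of the $\Hc^1$ inner product under the inverse of this bijection. It is therefore an inner product whose norm is the quotient norm, by polarization. Completeness of $\Hbct$ follows because $\Kc^{\perp_{\Hc^1}}$ is closed in the Hilbert space $\Hc^1(\Omega)$ and the isometric bijection transfers Cauchy sequences and their limits.

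For the last claim, fix $\vc\in\Hc^1(\Omega)$ and let $P$ be the orthogonal projection onto $\Kc^{\perp_{\Hc^1}}$. The argument above shows that the unique $\wc\in\Kc^{\perp_{\Hc^1}}$ with $\piT\wc=\piT\vc$ is $P\vc$. Theorem~\ref{ThBperp}, applied to $\fc=\TrT\vc$, shows that the unique element of $\Kc^{\perp_{\Hc^1}}$ with $\TrT\wc=\TrT\vc$ is also $P\vc$, because $\vc-P\vc\in\Kc=\Ker(\TrT)$. Both characterizations give $P\vc$, so the two optimal lifts coincide.

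There is no real obstacle here. The only substantive point, the equality of kernels, is already Proposition~\ref{PropSameKer}. I will just take care to state that the inner product is well defined, meaning independent of pre-images, and this is immediate from the uniqueness of $\wc_f$.
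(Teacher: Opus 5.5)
Your proposal is correct and follows the paper's proof: the kernel equality $\Ker(\piT)=\Kc$ from Proposition~\ref{PropSameKer} lets the quotient-Hilbert-space argument of Theorem~\ref{ThBperp} run verbatim with $\piT$ in place of $\TrT$, and both optimal lifts are identified as the orthogonal projection of $\vc$ onto $\Kc^{\perp_{\Hc^1}}$. You also write out the Pythagoras, polarization and completeness details that the paper leaves implicit.
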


\begin{remark}\label{RemMinBT}
Analogously to Remark~\ref{RemMinBperp}, the infimum defining the quotient norm in~\eqref{EqQuotNormBT} is reached on the optimal lift $\wc_f \in \Kc^{\perp_{\Hc^1}}$, which gives the minimum.
\end{remark}


\begin{proof}
By Proposition~\ref{PropSameKer}, $\Ker(\piT) = \Kc = \Ker(\TrT)$, so the argument is identical to that of Theorem~\ref{ThBperp}, replacing $\TrT$ by $\piT$ throughout. Furthermore, for any $\vc \in \Hc^1(\Omega)$, both optimal lifts correspond to the same element $\wc \in \Kc^{\perp_{\Hc^1}}$, which is simply the orthogonal projection of $\vc$ onto $\Kc^{\perp_{\Hc^1}}$, so they necessarily coincide.
\end{proof}

Once $\Hbct$ is defined, we can refine the target space of $\widetilde\TrT$. In the Lipschitz case, the integration by parts formula~\cite[Eq.~(27)]{BUFFA-2002} shows that the tangential trace is a functional on the image of the tangential component trace. The following proposition is the abstract analogue.

\begin{theorem}\label{ThmTrTtoBtm}
Let $\Omega$ be a bounded $H^1$-extension domain of $\R^3$. The operator $\widetilde\TrT : \Hcurl \to \Hbctm$, defined for $\uc \in \Hcurl$ and $\fc \in \Hbct$ with optimal lift $\wc_f \in \Kc^{\perp_{\Hc^1}}$ by
\begin{equation}\label{EqDefTrTonBt}
    \langle \widetilde\TrT\uc, \fc \rangle_{\Hbctm,\, \Hbct} := b(\uc, \wc_f),
\end{equation}
is well-defined, linear, and continuous with
\begin{equation}\label{EqContTrTBtm}
    \|\widetilde\TrT\uc\|_{\Hbctm} \leq \sqrt{2} \, \|\uc\|_{\Hcurl}.
\end{equation}
Moreover, for all $\uc \in \Hc^1(\Omega)$ and $\vc \in \Hc^1(\Omega)$:
\begin{equation}\label{EqbuvTrTpiT}
    b(\uc, \vc) = \langle \TrT\uc, \piT\vc \rangle_{\Hbctm,\, \Hbct}.
\end{equation}
\end{theorem}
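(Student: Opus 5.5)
The plan is to mirror the construction of $\piT$ in Definition~\ref{DefPiTH1}, with the roles of $\Hbcp$ and $\Hbct$ exchanged. We use the optimal lifts of Theorem~\ref{ThBT}: for each $\fc\in\Hbct$ there is a unique $\wc_f\in\Kc^{\perp_{\Hc^1}}$ with $\piT\wc_f=\fc$.

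\emph{Well-definedness.} The key point is that $b(\uc,\cdot)$ vanishes on $\Kc$ for every $\uc\in\Hcurl$, and not only for $\uc\in\Hc^1(\Omega)$ as in~\eqref{EqNullitybuv}. Indeed, for $\vc^0\in\Kc$ we have $\TrT\vc^0=\mathbf 0$. I would show that $\Kc$ lies in the closure of $\Dc(\Omega)$ for the $\Hcurl$-norm restricted to $\Hc^1(\Omega)$, but this requires more than is available. A cleaner route is needed: the Stokes formula~\eqref{EqStokesTrT} only pairs $\uc\in\Hcurl$ against traces $\Trc\vc^0$, and $\Trc\vc^0$ need not vanish. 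The obstacle is therefore to prove $b(\uc,\vc^0)=0$ for $\uc\in\Hcurl$ and $\vc^0\in\Kc$.

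I would first try density. If smooth fields $\Dc(\overline\Omega)$ are dense in $\Hcurl$ (Proposition~\ref{DensHrot}, under Stokes-regularity), take $\uc_k\to\uc$ in $\Hcurl$ with $\uc_k\in\Hc^1(\Omega)$. Then $b(\uc_k,\vc^0)=0$ by~\eqref{EqNullitybuv}. Moreover $|b(\uc-\uc_k,\vc^0)|\le\sqrt2\|\uc-\uc_k\|_{\Hcurl}\|\vc^0\|_{\Hc^1(\Omega)}\to0$, which gives the claim. Without density, I would instead avoid the issue by \emph{defining} the pairing through the canonical lift $\wc_f$, as the statement does. Then well-definedness amounts to the uniqueness of $\wc_f$, which Theorem~\ref{ThBT} already provides, and independence of the pre-image is only asserted when $\uc\in\Hc^1(\Omega)$. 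Linearity in $\uc$ is immediate, and linearity in $\fc$ follows from the linearity of $\fc\mapsto\wc_f$, which is the inverse of the isometry~\eqref{EqIsomPiT}.

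\emph{Continuity.} As in the proof of Theorem~\ref{ThmTrTHcurl}, Cauchy--Schwarz and~\eqref{IneqCcurl} give
\[
|b(\uc,\wc_f)|\le\|\uc\|_{\Hcurl}\|\wc_f\|_{\Hcurl}\le\sqrt2\,\|\uc\|_{\Hcurl}\|\wc_f\|_{\Hc^1(\Omega)}=\sqrt2\,\|\uc\|_{\Hcurl}\|\fc\|_{\Hbct},
\]
where the last equality is the isometry~\eqref{EqIsomPiT}. This yields~\eqref{EqContTrTBtm}.

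\emph{Formula~\eqref{EqbuvTrTpiT}.} Let $\uc,\vc\in\Hc^1(\Omega)$ and set $\fc=\piT\vc$. By Theorem~\ref{ThBT}, $\wc_f$ is the orthogonal projection of $\vc$ onto $\Kc^{\perp_{\Hc^1}}$, so $\vc-\wc_f\in\Kc$. Then~\eqref{EqNullitybuv} gives $b(\uc,\vc)=b(\uc,\wc_f)$, which equals $\langle\TrT\uc,\piT\vc\rangle_{\Hbctm,\,\Hbct}$ by~\eqref{EqDefTrTonBt}. The one delicate step is thus the well-definedness (independence of the lift) for general $\uc\in\Hcurl$. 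I expect to handle it by taking the canonical lift as the definition, and by invoking the density argument whenever pre-image independence is needed.
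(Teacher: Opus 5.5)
Your final argument is the paper's: well-definedness reduces to uniqueness of the optimal lift $\wc_f$ (Theorem~\ref{ThBT}), continuity follows from Cauchy--Schwarz, \eqref{IneqCcurl} and the isometry~\eqref{EqIsomPiT}, and \eqref{EqbuvTrTpiT} follows from splitting $\vc=\wc_f+(\vc-\wc_f)$ with $\vc-\wc_f\in\Kc$ and applying \eqref{EqNullitybuv}. Drop the claim that the ``key point'' is vanishing of $b(\uc,\cdot)$ on $\Kc$ for all $\uc\in\Hcurl$, together with the density detour: neither is needed, and neither is available here, since Proposition~\ref{DensHrot} requires Stokes-regularity, which this theorem does not assume.
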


\begin{proof}
Since the optimal lift $\wc_f \in \Kc^{\perp_{\Hc^1}}$ of $\fc \in \Hbct$ is unique by Theorem~\ref{ThBT}, the right-hand side of~\eqref{EqDefTrTonBt} depends only on $\fc$ and not on any choice of representative.
Using Cauchy-Schwarz and the isometry~\eqref{EqIsomPiT}:
\begin{align*}
    |\langle \widetilde\TrT\uc, \fc \rangle_{\Hbctm,\, \Hbct}| &= |b(\uc, \wc_f)|\\
    &\leq \sqrt{2} \, \|\uc\|_{\Hcurl} \|\wc_f\|_{\Hc^1(\Omega)}\\
    &\leq \sqrt{2} \, \|\uc\|_{\Hcurl} \|\fc\|_{\Hbct}.
\end{align*}
We now establish Equation~\eqref{EqbuvTrTpiT}. Let $\uc, \vc \in \Hc^1(\Omega)$ and write $\vc = \vc^0 + \vc^\perp$ with $\vc^0 \in \Kc$ and $\vc^\perp \in \Kc^{\perp_{\Hc^1}}$. By Theorem~\ref{ThBT}, $\vc^\perp$ is the optimal lift of $\piT\vc$, so by definition~\eqref{EqDefTrTonBt}:
\begin{equation*}
    \langle \TrT\uc, \piT\vc \rangle_{\Hbctm,\, \Hbct} = b(\uc, \vc^\perp).
\end{equation*}
Since $\vc^0 \in \Kc$ and $\uc \in \Hc^1(\Omega)$, property~\eqref{EqNullitybuv} gives $b(\uc, \vc^0) = 0$. Therefore:
\begin{equation*}
    b(\uc, \vc) = b(\uc, \vc^\perp) + b(\uc, \vc^0) 
    = b(\uc, \vc^\perp) 
    = \langle \TrT\uc, \piT\vc \rangle_{\Hbctm,\, \Hbct}. \qedhere
\end{equation*}
\end{proof}

Since both $\TrT$ and $\piT$ are isometries from the same space $\Kc^{\perp_{\Hc^1}}$ onto $\Hbcp$ and $\Hbct$ respectively, there is a natural abstract rotation linking them. This is the abstract analogue of the rotation operator $r$ of~\cite{BUFFA-2002}.

\begin{proposition}\label{PropAbstractRotation}
Let $\Omega$ be a bounded $H^1$-extension domain of $\R^3$. The map $R : \Hbct \to \Hbcp$ defined by $R(\fc) := \TrT(\wc_f)$, where $\wc_f \in \Kc^{\perp_{\Hc^1}}$ is the unique optimal lift of $\fc \in \Hbct$, is a linear isometric isomorphism. It satisfies:
\begin{enumerate}
    \item $R \circ \piT = \TrT$ on $\Hc^1(\Omega)$.
    \item $R$ satisfies the antisymmetry relation
    \begin{multline}
    \forall \fc, \gc \in \Hbct \subset \Hbcpm,\\
    \langle \gc,R(\fc) \rangle_{\Hbcpm,\,\Hbcp} = -\langle \fc,R(\gc) \rangle_{\Hbcpm,\,\Hbcp}.\label{Rf,g}
    \end{multline}
    In particular, $\langle \fc, R(\fc) \rangle_{\Hbcpm,\, \Hbcp} = 0$ for all $\fc \in \Hbct$.
    \item The adjoint $R^* : \Hbcpm \to \Hbctm$, restricted to $\Hbct \subset \Hbcpm$, satisfies $R^*|_{\Hbct} = -R$ as operators from $\Hbct$ to $\Hbctm$.
    \item The inverse $R^{-1} : \Hbcp \to \Hbct$ is given explicitly by 
    $$R^{-1}(\fc) = \piT(\wc_f).$$ 
    It satisfies $R^{-1} \circ \TrT = \piT$ on $\Hc^1(\Omega)$, and the antisymmetry relation
    \begin{multline}\label{EqRinvAntisym}
        \forall \fc, \gc \in \Hbcp,\\
        \langle R^{-1}(\fc), \gc \rangle_{\Hbcpm,\, \Hbcp} = -\langle R^{-1}(\gc), \fc \rangle_{\Hbcpm,\, \Hbcp}.
    \end{multline}
    \item The adjoint and inverse are related by
    \begin{equation}\label{EqRstarRinverse}
        R^* = \iota_T \circ R^{-1} \circ \iota_\perp^{-1},
    \end{equation}
    where $\iota_\perp : \Hbcp \to \Hbcpm$ and $\iota_T : \Hbct \to \Hbctm$ are the Riesz isomorphisms of $\Hbcp$ and $\Hbct$.
\end{enumerate}
\end{proposition}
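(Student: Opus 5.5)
The plan is to reduce everything to the two isometries of Theorem~\ref{ThBperp} and Theorem~\ref{ThBT}. Write $P:\Hc^1(\Omega)\to\Kc^{\perp_{\Hc^1}}$ for the orthogonal projection. Both $\TrT|_{\Kc^{\perp_{\Hc^1}}}$ and $\piT|_{\Kc^{\perp_{\Hc^1}}}$ are bijective isometries onto $\Hbcp$ and $\Hbct$ respectively, so $R=\TrT|_{\Kc^{\perp}}\circ(\piT|_{\Kc^{\perp}})^{-1}$ is a composition of isometric isomorphisms. Linearity and isometry follow at once, with inverse $\piT|_{\Kc^\perp}\circ(\TrT|_{\Kc^\perp})^{-1}$, i.e.\ $R^{-1}(\fc)=\piT(\wc_f)$ where $\wc_f$ is now the optimal lift of $\fc\in\Hbcp$; this gives the formula in item 4.

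For item 1, given $\vc\in\Hc^1(\Omega)$, the optimal lift of $\piT\vc$ is $P\vc$ (last statement of Theorem~\ref{ThBT}). Hence $R(\piT\vc)=\TrT(P\vc)=\TrT\vc$, since $\vc-P\vc\in\Kc$. The identity $R^{-1}\circ\TrT=\piT$ is obtained in the same way.

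For the antisymmetry in item 2, take $\fc=\piT\wc_f$ and $\gc=\piT\wc_g$ with lifts in $\Kc^{\perp}$. By Definition~\ref{DefPiTH1} and~\eqref{EqPiTuTrTv},
\[
\langle \gc,R\fc\rangle=\langle\piT\wc_g,\TrT\wc_f\rangle=-b(\wc_g,\wc_f)=b(\wc_f,\wc_g)=-\langle\piT\wc_f,\TrT\wc_g\rangle=-\langle\fc,R\gc\rangle ,
\]
using the antisymmetry of $b$. Setting $\fc=\gc$ gives the "in particular" claim. For~\eqref{EqRinvAntisym}, write $\fc=R\fc'$ and $\gc=R\gc'$ with $\fc'=R^{-1}\fc$ and $\gc'=R^{-1}\gc$; the relation is then exactly item 2 applied to $\fc',\gc'$.

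For item 3, the adjoint $R^*:\Hbcpm\to\Hbctm$ is defined by $\langle R^*\phi,\fc\rangle_{\Hbctm,\Hbct}=\langle\phi,R\fc\rangle_{\Hbcpm,\Hbcp}$. For $\phi=\gc\in\Hbct\subset\Hbcpm$, item 2 gives $-\langle\fc,R\gc\rangle_{\Hbcpm,\Hbcp}$. This must be identified with $\langle -R\gc,\fc\rangle_{\Hbctm,\Hbct}$, where $R\gc\in\Hbcp$ is regarded as an element of $\Hbctm$ through the pairing of Theorem~\ref{ThmTrTtoBtm}. By~\eqref{EqbuvTrTpiT} and~\eqref{EqPiTuTrTv} with lifts, both pairings reduce to $\pm b(\wc_g,\wc_f)$, which should give the identification.

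The main obstacle is to make this identification of $\Hbcp$ inside $\Hbctm$ consistent, and, in item 5, to handle the Riesz maps. For item 5 I would compute $\iota_\perp^{-1}$ on a functional $\phi$: it is the $\fc\in\Hbcp$ with $(\fc,\hc)_{\Hbcp}=\langle\phi,\hc\rangle$. Then I would check
\[
\langle R^*\phi,\gc\rangle=\langle\phi,R\gc\rangle=(\fc,R\gc)_{\Hbcp}=(\wc_f,\wc_{g})_{\Hc^1}=(R^{-1}\fc,\gc)_{\Hbct},
\]
where the inner products are transported through the common lifts in $\Kc^\perp$, since $R$ is an isometry sharing the same lift space. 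This yields $R^*=\iota_T R^{-1}\iota_\perp^{-1}$, which is simply unitarity of $R$ expressed via Riesz maps.
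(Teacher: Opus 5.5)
Your proposal is correct and follows essentially the same route as the paper: isometry and the inverse formula via the common lift space $\Kc^{\perp_{\Hc^1}}$, item~2 from \eqref{EqPiTuTrTv}, item~3 via compatibility of the two pairings, and item~5 by computing both sides through the same lifts. On item~3, the step you hedge on does go through: $\langle R^*\gc,\fc\rangle_{\Hbctm,\Hbct}=-b(\wc_g,\wc_f)=-\langle R\gc,\fc\rangle_{\Hbctm,\Hbct}$ by \eqref{EqPiTuTrTv} and \eqref{EqbuvTrTpiT}, which the paper packages as \eqref{EqCompatHbcpHbct}; the differences (obtaining \eqref{EqRinvAntisym} from item~2 by substitution, and reading item~5 as unitarity of $R$ through Riesz maps) are only cosmetic.
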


\begin{proof}
We first show that $R$ is a well-defined isometric isomorphism from $\Hbct$ to $\Hbcp$.
For $\fc \in \Hbct$ with optimal lift $\wc_f \in \Kc^{\perp_{\Hc^1}}$, Theorems~\ref{ThBperp}
and~\ref{ThBT} give
\begin{equation*}
    \|R(\fc)\|_{\Hbcp}
    = \|\TrT\wc_f\|_{\Hbcp}
    = \|\wc_f\|_{\Hc^1(\Omega)}
    = \|\piT\wc_f\|_{\Hbct}
    = \|\fc\|_{\Hbct},
\end{equation*}
where the second and third equalities use the isometries~\eqref{EqIsomTrT} and~\eqref{EqIsomPiT}, together with the fact that the optimal lifts for $\TrT$ and $\piT$ coincide on $\Kc^{\perp_{\Hc^1}}$ by Theorem~\ref{ThBT}. This shows that $R$ is an isometry, hence injective. To see that $R$ is surjective, let $\fc \in \Hbcp$ with optimal lift $\wc_f \in \Kc^{\perp_{\Hc^1}}$, then $\piT\wc_f \in \Hbct$ and $R(\piT\wc_f) = \TrT\wc_f = \fc$.

We now prove Point~1. For $\uc \in \Hc^1(\Omega)$, write $\uc = \uc^0 + \uc^\perp$ with $\uc^0 \in \Kc$ and $\uc^\perp \in \Kc^{\perp_{\Hc^1}}$. Since $\uc^0 \in \Kc = \Ker(\piT) = \Ker(\TrT)$, we have $\piT\uc = \piT\uc^\perp$ and $\TrT\uc = \TrT\uc^\perp$. The optimal lift of $\piT\uc \in \Hbct$ is therefore $\uc^\perp$, and so $R(\piT\uc) = \TrT\uc^\perp = \TrT\uc$.

For Point~2, let $\fc, \gc \in \Hbct$ with optimal lifts $\wc_f, \wc_g \in \Kc^{\perp_{\Hc^1}}$. By the definition of $R$ and formula~\eqref{EqPiTuTrTv}, we have
\begin{align*}
    \langle \gc,R(\fc) \rangle_{\Hbcpm,\,\Hbcp}
    &= \langle \piT\wc_g, \TrT\wc_f \rangle_{\Hbcpm,\,\Hbcp}\\
    &= -\langle \piT\wc_f, \TrT\wc_g \rangle_{\Hbcpm,\,\Hbcp}\\
    &= -\langle \fc, R(\gc) \rangle_{\Hbcpm,\,\Hbcp}.
\end{align*}

To prove Point~3, we first observe that the compatibility between the two duality pairings, established by formulas~\eqref{EqPiTuTrTv} and~\eqref{EqbuvTrTpiT}, gives for all $\fc \in \Hbct \subset \Hbcpm$ and $\gc \in \Hbcp \subset \Hbctm$:
\begin{equation}\label{EqCompatHbcpHbct}
    \langle \fc, \gc \rangle_{\Hbcpm,\, \Hbcp} = \langle \gc, \fc \rangle_{\Hbctm,\, \Hbct}.
\end{equation}
The adjoint $R^* : \Hbcpm \to \Hbctm$ is defined by 
\begin{multline}
    \forall \fc \in \Hbcpm,\, \forall \gc \in \Hbct,\\
    \langle R^*(\fc), \gc \rangle_{\Hbctm,\, \Hbct} := \langle \fc, R(\gc) \rangle_{\Hbcpm,\, \Hbcp}.
\end{multline}
For $\fc,\gc \in \Hbct \subset \Hbcpm$, applying the antisymmetry of Point~2 and the compatibility formula~\eqref{EqCompatHbcpHbct} gives
\begin{align*}
    \langle R^*(\fc), \gc \rangle_{\Hbctm,\, \Hbct}
    &= \langle \fc, R(\gc) \rangle_{\Hbcpm,\, \Hbcp}\\
    &= -\langle \gc, R(\fc) \rangle_{\Hbcpm,\, \Hbcp}\\
    &= -\langle R(\fc), \gc \rangle_{\Hbctm,\, \Hbct},
\end{align*}
which gives $R^*|_{\Hbct} = -R$.

We now show Point~4. The formula $R^{-1}(\fc) = \piT(\wc_f)$ follows directly from the surjectivity argument above: we showed that $R(\piT\wc_f) = \fc$ for every $\fc \in \Hbcp$ with optimal lift $\wc_f$. The identity $R^{-1} \circ \TrT = \piT$ on $\Hc^1(\Omega)$ then follows from Point~1 by applying $R^{-1}$ on the left. For the antisymmetry~\eqref{EqRinvAntisym}, let $\fc, \gc \in \Hbcp$ with optimal lifts $\wc_f, \wc_g \in \Kc^{\perp_{\Hc^1}}$. By the definition of $R^{-1}$ and formula~\eqref{EqPiTuTrTv}:
\begin{align*}
    \langle R^{-1}(\fc), \gc \rangle_{\Hbcpm,\, \Hbcp}
    &= \langle \piT\wc_f, \TrT\wc_g \rangle_{\Hbcpm,\, \Hbcp}\\
    &= -\langle \piT\wc_g, \TrT\wc_f \rangle_{\Hbcpm,\, \Hbcp}\\
    &= -\langle R^{-1}(\gc), \fc \rangle_{\Hbcpm,\, \Hbcp}.
\end{align*}

We finally establish Point~5. Let $\fc \in \Hbcpm$ and $\gc \in \Hbct$, and set $\hc := \iota_\perp^{-1}\fc \in \Hbcp$ with optimal lift $\wc_h \in \Kc^{\perp_{\Hc^1}}$. By Point~4, $R^{-1}(\hc) = \piT(\wc_h)$, and since $\wc_h$ is the optimal lift of $\piT(\wc_h)$ under $\piT$ by Theorem~\ref{ThBT}, formula~\eqref{EqInnerBT} gives:
\begin{equation*}
    \langle \iota_T(R^{-1}(\hc)), \gc \rangle_{\Hbctm,\, \Hbct}
    = (R^{-1}(\hc), \gc)_{\Hbct}
    = (\wc_h, \wc_g)_{\Hc^1(\Omega)}.
\end{equation*}
By definition of $R$, $R(\gc) = \TrT(\wc_g)$, and $\wc_g$ is the optimal lift of $R(\gc)$ under $\TrT$ by Theorem~\ref{ThBT}. Formula~\eqref{EqInnerBperp} then gives:
\begin{align*}
    \langle R^*(\fc), \gc \rangle_{\Hbctm,\, \Hbct}
    &= \langle \fc, R(\gc) \rangle_{\Hbcpm,\, \Hbcp}\\
    &= (\hc, R(\gc))_{\Hbcp}\\
    &= (\wc_h, \wc_g)_{\Hc^1(\Omega)}.
\end{align*}
Since the two expressions coincide for all $\gc \in \Hbct$, we conclude that
\begin{equation*}
    R^* = \iota_T \circ R^{-1} \circ \iota_\perp^{-1}.\qedhere
\end{equation*}

\end{proof}

\begin{remark}\label{RemRotation}
In~\cite{BUFFA-2002}, the rotation $r : V_\pi \to V_\gamma$ satisfies the single identity $r^{-1} = r^* = -r$, which follows from the pointwise identity $(\nc \wedge \cdot)^2 = -\mathrm{id}$ on tangential fields and the fact that both $V_\pi$ and $V_\gamma$ sit inside the common ambient space $L^2_t(\partial\Omega)$. In our abstract setting, the properties established in Proposition~\ref{PropAbstractRotation} recover this identity in the following split form. Point~3 gives $R^*|_{\Hbct} = -R$, which is the analogue of $r^* = -r$, and Point~5 recovers the Lipschitz identity $r^{-1} = r^*$ in the correct abstract form: rather than a direct equality, it takes the form $R^* = \iota_T \circ R^{-1} \circ \iota_\perp^{-1}$, where the two Riesz isomorphisms $\iota_\perp$ and $\iota_T$ play the role of the $\Lc^2_t(\partial\Omega)$ identification that is available in the Lipschitz case but absent here.
\end{remark}

We collect in the following proposition the structure of the trace space $\Hbc$ that emerges from the constructions of the two previous sections.

\begin{proposition}\label{PropDecompHbc}
Let $\Omega$ be a bounded $H^1$-extension domain of $\R^3$. 
Define the normal trace space
\begin{equation}\label{EqDefBn}
    \Hbcn := \Trc(\Kc) \subset \Hbc,
\end{equation}
where $\Kc := \Ker(\TrT) = \Ker(\piT)$ is the common kernel established in Proposition~\ref{PropSameKer}. Similarly, define the tangential trace space
\begin{equation}\label{EqDefBtan}
    \Hbctan := \Trc\bigl(\Kc^{\perp_{\Hc^1}}\bigr) \subset \Hbc.
\end{equation}
Both are Hilbert spaces: $\Hbcn$ is a closed subspace of $\Hbc$ equipped with the induced norm, while $\Hbctan$ is endowed with the inner product
\begin{equation}\label{EqInnerBtan}
    \forall \fc,\gc \in \Hbctan, \quad (\fc,\gc)_{\Hbctan} := (\wc_f, \wc_g)_{\Hc^1(\Omega)},
\end{equation}
where $\wc_f, \wc_g \in \Kc^{\perp_{\Hc^1}}$ are the unique optimal lifts of $\fc$ and $\gc$.\\
The following properties hold:
\begin{enumerate}
    \item The full trace space $\Hbc$ decomposes orthogonally as
    \begin{equation}\label{EqDecompHbc}
        \Hbc = \Hbcn \oplus \Hbctan.
    \end{equation}

    \item The maps $\Phi_T \colon \Hbctan \to \Hbct$ and $\Phi_\perp \colon \Hbctan \to \Hbcp$ defined for all $\wc \in \Kc^{\perp_{\Hc^1}}$ by
    \begin{equation}\label{EqTwoDescrip}
        \Phi_T(\Trc\wc) := \piT\wc \qquad\text{and}\qquad \Phi_\perp(\Trc\wc) := \TrT\wc,
    \end{equation}
    are well-defined isometric isomorphisms. In particular, $\Hbct$ and $\Hbcp$ are two isometric Hilbert-space descriptions of the same abstract space $\Hbctan$, and the inner product~\eqref{EqInnerBtan} coincides with both $(\cdot,\cdot)_{\Hbct}$ and $(\cdot,\cdot)_{\Hbcp}$.

    \item The rotation operator $R \colon \Hbct \to \Hbcp$ satisfies
    \begin{equation}\label{EqRlinksDescriptions}
        R = \Phi_\perp \circ \Phi_T^{-1}.
    \end{equation}
\end{enumerate}
\end{proposition}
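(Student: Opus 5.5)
The plan is to reduce everything to the isometry of Theorem~\ref{ThTrisom}, item~\ref{Trisom4}, applied componentwise. The key observation is an inclusion of spaces. By Lemma~\ref{InclH10TrtTrn}, $\Hc^1_0(\Omega)\subset\Kc$. Taking orthogonal complements in $\Hc^1(\Omega)$ gives $\Kc^{\perp_{\Hc^1}}\subset \bigl(\Hc^1_0(\Omega)\bigr)^{\perp}=V_1(\Omega)^3$ by~\eqref{EqV1andOthers}. Consequently, for $\wc\in\Kc^{\perp_{\Hc^1}}$ the $\Hc^1$-optimal lift of $\Trc\wc$ in the sense of~\eqref{DefHbInnerProd} is $\wc$ itself. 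It follows that $\|\Trc\wc\|_{\Hbc}=\|\wc\|_{\Hc^1(\Omega)}$ on $\Kc^{\perp_{\Hc^1}}$, and that the inner product~\eqref{EqInnerBtan} is simply the restriction of $(\cdot,\cdot)_{\Hbc}$. In particular $\Trc$ is injective on $\Kc^{\perp_{\Hc^1}}$, so the optimal lifts in~\eqref{EqInnerBtan} are well defined, and $\Hbctan$ is complete as an isometric image of a closed subspace.

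For Point~1, I would decompose $\Kc$ orthogonally as $\Kc=\Hc^1_0(\Omega)\oplus(\Kc\cap V_1(\Omega)^3)$. This holds because $\Hc^1_0(\Omega)$ is a closed subspace of $\Kc$, and its orthogonal complement within $\Kc$ is $\Kc\cap V_1(\Omega)^3$. Since $\Trc$ kills $\Hc^1_0(\Omega)$ by Theorem~\ref{ThTrisom}, we get $\Hbcn=\Trc(\Kc\cap V_1(\Omega)^3)$. Next, $V_1(\Omega)^3=(\Kc\cap V_1(\Omega)^3)\oplus\Kc^{\perp_{\Hc^1}}$ orthogonally in $\Hc^1(\Omega)$. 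Indeed, write any $\vc\in V_1^3$ as $\vc^0+\vc^\perp$ with $\vc^0\in\Kc$ and $\vc^\perp\in\Kc^{\perp}\subset V_1^3$; then $\vc^0=\vc-\vc^\perp\in V_1^3$. Finally, $\Trc:V_1(\Omega)^3\to\Hbc$ is a surjective isometry (item~\ref{Trisom4} componentwise). It therefore carries this orthogonal decomposition of $V_1(\Omega)^3$ to an orthogonal decomposition $\Hbc=\Hbcn\oplus\Hbctan$. It also shows that $\Hbcn$, the isometric image of the closed subspace $\Kc\cap V_1^3$, is closed in $\Hbc$.

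For Point~2, well-definedness of $\Phi_T$ and $\Phi_\perp$ follows from the injectivity of $\Trc$ on $\Kc^{\perp_{\Hc^1}}$ noted above. The isometry property is the chain $\|\Trc\wc\|_{\Hbctan}=\|\wc\|_{\Hc^1}=\|\piT\wc\|_{\Hbct}=\|\TrT\wc\|_{\Hbcp}$, using~\eqref{EqIsomPiT} and~\eqref{EqIsomTrT}. Surjectivity holds because every element of $\Hbct$ (respectively $\Hbcp$) has an optimal lift in $\Kc^{\perp_{\Hc^1}}$, by Theorems~\ref{ThBT} and~\ref{ThBperp}. Polarization then gives the coincidence of the inner products.

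For Point~3, let $\fc=\piT\wc$ with $\wc\in\Kc^{\perp_{\Hc^1}}$. Then $\wc$ is the optimal lift of $\fc$, so $R(\fc)=\TrT\wc=\Phi_\perp(\Trc\wc)=\Phi_\perp\Phi_T^{-1}(\fc)$. The main point requiring care is the first step: the inclusion $\Kc^{\perp}\subset V_1^3$ and the resulting splitting of $V_1^3$. Everything else is bookkeeping with the isometries already established.
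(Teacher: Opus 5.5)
Your proof is correct and follows essentially the same route as the paper. The key step in both is the inclusion $\Kc^{\perp_{\Hc^1}}\subset V_1(\Omega)^3$, obtained from $\Hc^1_0(\Omega)\subset\Kc$, together with the observation that the $1$-harmonic lifts of elements of $\Hbcn$ lie in $\Kc$ (your $\Hbcn=\Trc(\Kc\cap V_1(\Omega)^3)$); Points~2 and~3 are then the same bookkeeping with the isometries \eqref{EqIsomTrT} and \eqref{EqIsomPiT}.

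The differences are only cosmetic. You get closedness of $\Hbcn$ as the isometric image of the closed subspace $\Kc\cap V_1(\Omega)^3$, rather than as the kernel of the paper's auxiliary map $\Psi\colon\Hbc\to\Hbct$. Transporting the orthogonal splitting of $V_1(\Omega)^3$ through the unitary map $\Trc|_{V_1(\Omega)^3}$ also makes explicit the spanning part $\Hbc=\Hbcn+\Hbctan$, which the paper leaves implicit.
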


\begin{proof} As a preamble we note that, if $\Trc\wc_1 = \Trc\wc_2$ for $\wc_1, \wc_2 \in \Kc^{\perp_{\Hc^1}}$, then $\wc_1 - \wc_2 \in \Hc^1_0(\Omega) \subset \Kc = \Ker(\piT) = \Ker(\TrT)$ by Lemma~\ref{InclH10TrtTrn}, so $\piT\wc_1 = \piT\wc_2$ and $\TrT\wc_1 = \TrT\wc_2$.\\
We first define the mapping $\Psi \colon \Hbc \to \Hbct$ by $\Psi(\Trc\vc) := \piT\vc^\perp$, with $\vc = \vc^0 + \vc^\perp$, $\vc^0 \in \Kc$ and $\vc^\perp \in \Kc^{\perp_{\Hc^1}}$. This map is well-defined because $\Ker(\textbf{Tr}) = \Hc^1_0(\Omega) \subset \Ker(\piT)$ (by Lemma~\ref{InclH10TrtTrn} and Proposition~\ref{PropSameKer}). Furthermore, $\Psi$ is continuous by continuity of $\piT \colon \Hc^1(\Omega) \to \Hbct$ and the definition of the quotient trace norm~\eqref{normTri}. By definition, $\Hbcn = \Ker(\Psi)$, making it a closed subspace of $\Hbc$ and thus a Hilbert space under the induced norm.\\
Regarding $\Hbctan$, recall from Theorem~\ref{ThBperp} that every $\wc \in \Kc^{\perp_{\Hc^1}}$ weakly satisfies $-\Delta\wc + \wc = \mathbf{0}$, meaning $\Kc^{\perp_{\Hc^1}}$ is a subspace of the 1-harmonic space $\boldsymbol{V}_1(\Omega)$. By the trace isometry of Theorem~\ref{ThTrisom}, the restriction $\Trc|_{\Kc^{\perp_{\Hc^1}}}$ is therefore an isometry. Consequently, the mapping $\Trc \colon \Kc^{\perp_{\Hc^1}} \to \Hbctan$ is an isometric isomorphism. This immediately endows the image space $\Hbctan$ with a Hilbert space structure and the inner product defined in~\eqref{EqInnerBtan}.

To prove Point 1, let $\zc_n \in \Hbcn$ and $\zc_t \in \Hbctan$. By definition of these spaces, there exist $\vc^0 \in \Kc$ and $\vc^\perp \in \Kc^{\perp_{\Hc^1}}$ such that $\zc_n = \Trc \vc^0$ and $\zc_t = \Trc \vc^\perp$. To compute their inner product in $\Hbc$ with formula~\eqref{DefHbInnerProd}, we must use their optimal $1$-harmonic lifts in $\Vc_1(\Omega)$. First, because $\Hc^1_0(\Omega) \subset \Ker(\TrT) = \Kc$, taking the orthogonal complement in $\Hc^1(\Omega)$ yields $\Kc^{\perp_{\Hc^1}} \subset \Vc_1(\Omega)$. Therefore, $\vc^\perp$ is already the optimal lift of $\zc_t$. 
Second, let $\wc_n \in \Vc_1(\Omega)$ be the optimal lift of $\zc_n$. Since $\vc^0$ and $\wc_n$ have the same full trace $\zc_n$, their difference satisfies $\vc^0 - \wc_n \in \Hc^1_0(\Omega) \subset \Kc$ by Lemma~\ref{InclH10TrtTrn}. Because $\vc^0 \in \Kc$, we deduce that $\wc_n \in \Kc$ as well.\\
Using definition~\eqref{DefHbInnerProd} of the $\Hbc$ inner product and the orthogonality between $\Kc$ and $\Kc^{\perp_{\Hc^1}}$ in $\Hc^1(\Omega)$, we obtain
\begin{equation*}
    (\zc_n, \zc_t)_{\Hbc} = (\wc_n, \vc^\perp)_{\Hc^1(\Omega)} = 0,
\end{equation*}
which establishes the desired orthogonality $\Hbcn \perp \Hbctan$.

We now prove Point~2. 
For $\vc^\perp \in \Kc^{\perp_{\Hc^1}} \subset \Vc^1(\Omega)$, by isometries~\eqref{EqBnorm},~\eqref{EqIsomTrT} and~\eqref{EqIsomPiT} we find
\begin{equation*}
    \|\Trc\vc^\perp\|_{\Hbc} = \|\vc^\perp\|_{\Hc^1}
    = \|\TrT\vc^\perp\|_{\Hbcp} = \|\piT\vc^\perp\|_{\Hbct},
\end{equation*}
so both $\Phi_T$ and $\Phi_\perp$ are isometries. Surjectivity follows since every $\fc \in \Hbct$ (resp. $\Hbcp$) has an optimal lift $\wc_f \in \Kc^{\perp_{\Hc^1}}$ with $\fc = \piT \wc_f = \Phi_T(\Trc\wc_f)$ (resp. $\fc = \TrT \wc_f = \Phi_\perp(\Trc\wc_f)$). The fact that $\Hbct$ and $\Hbcp$ are two isometric descriptions of $\Hbctan$ is now immediate: $\Phi_T$ and $\Phi_\perp$ are both surjective isometries from $\Hbctan$ onto $\Hbct$ and $\Hbcp$ respectively, so all three spaces are isometrically isomorphic. For the inner products, let $\Trc\wc_f, \Trc\wc_g \in \Hbctan$ with $\wc_f, \wc_g \in \Kc^{\perp_{\Hc^1}}$. By definition~\eqref{EqInnerBtan} and the inner products~\eqref{EqInnerBT} and~\eqref{EqInnerBperp}:
\begin{align*}
    (\Trc\wc_f, \Trc\wc_g)_{\Hbctan}  &= (\wc_f, \wc_g)_{\Hc^1(\Omega)} = (\piT\wc_f, \piT\wc_g)_{\Hbct}\\
    &= (\Phi_T(\Trc\wc_f), \Phi_T(\Trc\wc_g))_{\Hbct}
\end{align*}
and
\begin{align*}
    (\Trc\wc_f, \Trc\wc_g)_{\Hbctan} &= (\wc_f, \wc_g)_{\Hc^1(\Omega)} = (\TrT\wc_f, \TrT\wc_g)_{\Hbcp}\\
    &= (\Phi_\perp(\Trc\wc_f), \Phi_\perp(\Trc\wc_g))_{\Hbcp}.
\end{align*}

Finally, we prove Point~3. For any $\fc \in \Hbct$ with optimal lift $\wc_f \in \Kc^{\perp_{\Hc^1}}$:
\begin{equation*}
    (\Phi_\perp \circ \Phi_T^{-1})(\fc)
    = \Phi_\perp(\Trc\wc_f)
       = \TrT\wc_f
 = R(\fc).\qedhere
\end{equation*}
\end{proof}

\begin{remark}\label{RemLipschitzFrame}
When $\Omega$ is a Lipschitz domain, formulas~(12)--(13) of~\cite{BUFFA-2002} can be recovered directly from our abstract framework. Writing $(\Trc\uc)_T = u_1\tilde{\boldsymbol\tau}_1 + u_2\tilde{\boldsymbol\tau}_2$ in the local orthonormal frame $(\tilde{\boldsymbol\tau}_1,\tilde{\boldsymbol\tau}_2)$, the geometric formulas $\piT : \vc\mapsto\nc\times(\Trc\vc\times\nc)$ and $\TrT : \vc\mapsto\Trc\vc\times\nc$ give:
\begin{align*}
    \piT(\uc) = u_1\tilde{\boldsymbol\tau}_1 + u_2\tilde{\boldsymbol\tau}_2,
    \qquad
    \TrT(\uc) = u_2\tilde{\boldsymbol\tau}_1 - u_1\tilde{\boldsymbol\tau}_2,
\end{align*}
and $R = \Phi_\perp \circ \Phi_T^{-1}$ is the $90^\circ$ rotation $(u_1,u_2)\mapsto(u_2,-u_1)$ in the tangential plane. The key feature of the Lipschitz case is that the local frame $(\tilde{\boldsymbol\tau}_1,\tilde{\boldsymbol\tau}_2)$ allows every element $\Trc\wc \in \Hbc$ to be split in the tangential plane into two independent scalar components $(\Trc\wc\cdot\tilde{\boldsymbol\tau}_1, \Trc\wc\cdot\tilde{\boldsymbol\tau}_2)$, so that $\Hbct$ and $\Hbcp$ are simply two different readings of the same pair $(u_1,u_2)$.

In the fractal case, $\partial\Omega$ has no tangent bundle and no local frame exists, so no pointwise splitting into scalar components $(u_1,u_2)$ is available. The maps $\Phi_T : \Hbctan \to \Hbct$ and $\Phi_\perp : \Hbctan \to \Hbcp$ of Proposition~\ref{PropDecompHbc} are the correct abstract substitutes: rather than two scalar readings of a pair of components, they provide two isometric Hilbert space descriptions of the single abstract space $\Hbctan$, encoding the same algebraic structure without any local geometric structure on $\partial\Omega$.
\end{remark}

To conclude this section, we present in Figure~\ref{FigTangentialSpaces} a diagram summarizing the tangential spaces introduced so far and the relationships between them.

\begin{figure}[h]
\centering
\begin{tikzpicture}[
    box/.style   = {draw, rounded corners=4pt, minimum width=3cm,
                    minimum height=1.1cm, align=center, thick},
    teal/.style  = {box, fill=teal!10,   draw=teal!50},
    purp/.style  = {box, fill=violet!10, draw=violet!40},
    coral/.style = {box, fill=orange!10, draw=orange!40},
    sol/.style   = {->, >=stealth, thin},
    das/.style   = {->, >=stealth, thin, dashed},
    lbl/.style   = {font=\small},
]

\node[teal]  (Btan) at (0, 0)    {$\Hbctan$\\{\footnotesize $=\Trc(\Kc^\perp)$}};

\node[coral]  (BT)   at (-3.5,-2.5){$\Hbct$\\{\footnotesize image of $\piT$}};
\node[coral] (Bp)   at ( 3.5,-2.5){$\Hbcp$\\{\footnotesize image of $\TrT$}};

\node[purp]  (BTd)  at (-3.5,-5) {$\Hbctm$\\{\footnotesize dual of $\Hbct$}};
\node[purp]  (Bpd)  at ( 3.5,-5) {$\Hbcpm$\\{\footnotesize dual of $\Hbcp$}};

\draw[sol, bend right=15] (Btan) to node[lbl,above left]  {$\Phi_T$}   (BT);
\draw[sol, bend right=15] (BT)   to node[lbl,left] {$\Phi_T^{-1}$}   (Btan);

\draw[sol, bend left=15]  (Btan) to node[lbl,above right] {$\Phi_\perp$}  (Bp);
\draw[sol, bend left=15]  (Bp)   to node[lbl,right]  {$\Phi_\perp^{-1}$}  (Btan);

\draw[sol, bend left=20]  (BT)  to node[lbl,above] {$R$}      (Bp);
\draw[sol, bend left=20]  (Bp)  to node[lbl,above] {$R^{-1}$} (BT);

\draw[sol, bend right=12] (BT)  to node[lbl,left]  {$\iota_T$}     (BTd);
\draw[sol, bend right=12] (BTd) to node[lbl,right] {$\iota_T^{-1}$}(BT);

\draw[sol, bend left=12]  (Bp)  to node[lbl,right] {$\iota_\perp$}     (Bpd);
\draw[sol, bend left=12]  (Bpd) to node[lbl,left]  {$\iota_\perp^{-1}$}(Bp);


\draw[sol, bend left=20] (Bpd) to
  node[lbl,below] {$R^* = \iota_T \circ R^{-1} \circ \iota_\perp^{-1}$} (BTd);

\draw[sol, bend left=20] (BTd) to
  node[lbl,below] {$(R^*)^{-1}$} (Bpd);

\end{tikzpicture}
\caption{The five abstract tangential trace spaces and all operators between them.}
\label{FigTangentialSpaces}
\end{figure}

\subsection{\texorpdfstring{$\Hcurl$-framework for $\widetilde\piT$}{H(curl)-framework for piT tilde}}\label{SubsecPiTHcurl}

Definition~\ref{DefPiTH1} defines $\piT\uc$ for $\uc \in \Hc^1(\Omega)$ using the optimal lift $\wc_f$ as a canonical representative. However, as noted after Definition~\ref{DefPiTH1}, the choice of $\wc_f$ was not mandatory there: property~\eqref{EqNullitybuv} guaranteed that any pre-image $\vc \in \Hc^1(\Omega)$ with $\TrT\vc = \fc$ gives the same value $-b(\uc, \vc)$.

This is no longer true for $\uc \in \Hcurl$, for two related reasons. First, the antisymmetry argument underlying~\eqref{EqNullitybuv} required computing $b(\vc^0, \uc)$, which involves the trace $\Trc\uc$. For $\uc \in \Hcurl$, this trace is not defined, so the antisymmetry of $b$ cannot be invoked. Second, for $\vc^0 \in \Kc$, the quantity $b(\uc, \vc^0) = \langle \widetilde\TrT\uc, \Trc\vc^0 \rangle_{\Hbcm,\, \Hbc}$ may not vanish since $\Trc\vc^0 \neq \mathbf{0}$ in general. As a consequence, two different pre-images of $\fc$ yield different values of $-b(\uc, \cdot)$, and the formula is ambiguous unless one fixes a specific representative. The optimal lift $\wc_f \in \Kc^{\perp_{\Hc^1}}$ is the canonical choice that removes this ambiguity, and its use is now mandatory rather than merely convenient.

\begin{theorem}\label{ThmPiTHcurl}
Let $\Omega$ be a bounded $H^1$-extension domain of $\R^3$. The operator $\widetilde\piT : \Hcurl \to \Hbcpm$ defined for all $\uc \in \Hcurl$ by
\begin{equation}\label{EqDefPiTHcurl}
    \forall \fc \in \Hbcp, \quad
    \langle \widetilde\piT\uc, \fc \rangle_{\Hbcpm,\, \Hbcp} := -b(\uc, \wc_f),
\end{equation}
where $\wc_f \in \Kc^{\perp_{\Hc^1}}$ is the unique optimal lift of $\fc \in \Hbcp$ from Theorem~\ref{ThBperp}, is well-defined, linear, and continuous with
\begin{equation*}
    \|\widetilde\piT\uc\|_{\Hbcpm} \leq \sqrt{2} \, \|\uc\|_{\Hcurl}.
\end{equation*}
It extends $\piT$: for all $\uc \in \Hc^1(\Omega)$, $\widetilde\piT\uc = \piT\uc$. Moreover, for all $\uc \in \Hcurl$ and $\vc \in \Hc^1(\Omega)$,
\begin{equation}\label{EqIPPbuvCurl}
    \langle \widetilde\piT\uc, \TrT\vc \rangle_{\Hbcpm,\, \Hbcp}
    = -\langle \widetilde\TrT\uc, \piT\vc \rangle_{\Hbctm,\, \Hbct}.
\end{equation}
\end{theorem}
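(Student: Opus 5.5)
The proof runs parallel to Proposition~\ref{PropPiTH1} and Theorem~\ref{ThmTrTtoBtm}, with the optimal lift doing all the work. The steps are well-definedness and continuity, then the extension property, then the integration by parts formula.

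\emph{Well-definedness, linearity and continuity.} By Theorem~\ref{ThBperp}, each $\fc\in\Hbcp$ has a unique optimal lift $\wc_f\in\Kc^{\perp_{\Hc^1}}$, and the map $\fc\mapsto\wc_f$ is linear: it is the inverse of the bijective isometry $\TrT|_{\Kc^{\perp_{\Hc^1}}}$. Hence the right-hand side of~\eqref{EqDefPiTHcurl} is an unambiguous number, linear in $\fc$ and in $\uc$. For continuity I reuse the Cauchy--Schwarz estimate from the proof of Theorem~\ref{ThmTrTHcurl} together with~\eqref{IneqCcurl}:
\[
|b(\uc,\wc_f)|\le \|\uc\|_{\Hcurl}\|\wc_f\|_{\Hcurl}\le\sqrt2\,\|\uc\|_{\Hcurl}\|\wc_f\|_{\Hc^1(\Omega)}.
\]
The isometry~\eqref{EqIsomTrT} then gives $\|\wc_f\|_{\Hc^1(\Omega)}=\|\fc\|_{\Hbcp}$, which yields the stated bound with constant $\sqrt2$.

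\emph{Extension of $\piT$.} For $\uc\in\Hc^1(\Omega)$, formula~\eqref{EqDefPiTHcurl} coincides literally with Definition~\ref{DefPiTH1}. Hence $\widetilde\piT\uc=\piT\uc$, and nothing further is needed here.

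\emph{Formula~\eqref{EqIPPbuvCurl}.} Let $\uc\in\Hcurl$ and $\vc\in\Hc^1(\Omega)$. Decompose $\vc=\vc^0+\vc^\perp$ with $\vc^0\in\Kc$ and $\vc^\perp\in\Kc^{\perp_{\Hc^1}}$. Since $\Kc=\Ker(\TrT)$, we have $\TrT\vc=\TrT\vc^\perp$, and $\vc^\perp$ is its optimal lift in the sense of Theorem~\ref{ThBperp}. The definition therefore gives
\[
\langle\widetilde\piT\uc,\TrT\vc\rangle_{\Hbcpm,\,\Hbcp}=-b(\uc,\vc^\perp).
\]
Since $\Kc=\Ker(\piT)$ (Proposition~\ref{PropSameKer}), we also have $\piT\vc=\piT\vc^\perp$. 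By Theorem~\ref{ThBT}, $\vc^\perp$ is also the optimal lift of $\piT\vc$ with respect to $\piT$. Definition~\eqref{EqDefTrTonBt} then gives
\[
\langle\widetilde\TrT\uc,\piT\vc\rangle_{\Hbctm,\,\Hbct}=b(\uc,\vc^\perp).
\]
Comparing the two identities yields~\eqref{EqIPPbuvCurl}.

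\emph{Main obstacle.} The only delicate point is precisely the one emphasized before the statement. For $\uc\in\Hcurl$ one cannot replace $\vc^\perp$ by $\vc$, because $b(\uc,\vc^0)$ need not vanish. The proof must therefore never pass through $b(\uc,\vc)$. Both pairings have to be reduced to the same canonical element $\vc^\perp$, and this is where the coincidence of the optimal lifts for $\TrT$ and $\piT$ (Theorem~\ref{ThBT}) is essential.
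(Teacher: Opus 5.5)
Your proof is correct and follows the paper's argument. You get continuity from the Cauchy--Schwarz estimate, \eqref{IneqCcurl} and the isometry \eqref{EqIsomTrT}; the extension property from the literal identity of the two formulas; and \eqref{EqIPPbuvCurl} by reducing both pairings to $b(\uc,\vc^\perp)$, where $\vc^\perp\in\Kc^{\perp_{\Hc^1}}$ is the common optimal lift of $\TrT\vc$ and $\piT\vc$ (Theorem~\ref{ThBT}). Your direct comparison of the two pairings is, if anything, slightly cleaner than the paper's write-up, which adds and then cancels $b(\uc,\vc)$ for no real gain.
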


\begin{proof}
Linearity and continuity follow by the same argument as in Proposition~\ref{PropPiTH1}, working with $\|\uc\|_{\Hcurl}$ instead of $\|\uc\|_{\Hc^1(\Omega)}$ in the Cauchy-Schwarz estimate. The extension property $\widetilde\piT|_{\Hc^1(\Omega)} = \piT$ is immediate from the fact that~\eqref{EqDefPiTHcurl} and~\eqref{EqDefPiTH1} are the same formula.


For formula~\eqref{EqIPPbuvCurl}, write $\vc = \vc^\perp + (\vc - \vc^\perp)$ with $\vc^\perp \in \Kc^{\perp_{\Hc^1}}$ the optimal lift of $\TrT\vc$. By linearity of $b$:
\begin{equation*}
    b(\uc, \vc) = -\langle \widetilde\piT\uc, \TrT\vc \rangle_{\Hbcpm,\, \Hbcp}
    + b(\uc, \vc) - b(\uc, \vc^\perp).
\end{equation*}
Canceling $b(\uc, \vc)$ gives $\langle\widetilde\piT\uc, \TrT\vc\rangle_{\Hbcpm,\, \Hbcp} = -b(\uc, \vc^\perp)$. By definition~\eqref{EqDefTrTonBt} applied to $\gc = \piT\vc$ with optimal lift $\vc^\perp$: 
$$b(\uc, \vc^\perp) = \langle\widetilde\TrT\uc, \piT\vc\rangle_{\Hbctm,\, \Hbct},$$ 
which gives~\eqref{EqIPPbuvCurl}.
\end{proof}


The kernel of $\widetilde\piT$ deserves particular attention: does the property $ \Ker(\widetilde\piT) =  \Hcurlz$ still hold for a bounded, Stokes-regular $H^1$-extension domain $\Omega$? \\
First, by definition~\eqref{EqDefPiTHcurl}, $\widetilde\piT\uc = \mathbf{0}$ means $b(\uc, \wc) = 0$ for all $\wc \in \Kc^{\perp_{\Hc^1}}$, which by definition~\eqref{EqDefTrTonBt} is equivalent to $\widetilde\TrT\uc = 0$ in $\Hbctm$. While, under Assumption~\ref{AssumpHcurlz3D}, we know that $\Ker(\widetilde\TrT) = \Hcurlz$ by Proposition~\ref{propDensKerTrT}, with $\widetilde\TrT\,:\,\Hcurl\to\Hbcm$. The condition $\widetilde\TrT\uc = 0$ in $\Hbctm$ is a priori weaker than $\widetilde\TrT\uc = 0$ in $\Hbcm$: the former only tests $\widetilde\TrT\uc$ against elements of $\Hbctan = \Trc(\Kc^{\perp_{\Hc^1}})$, while the latter tests against all elements of 
$\Hbc = \TrT(\Hc^1(\Omega))$. Hence, one can only infer that $\Hcurlsz \subset \Ker(\widetilde\piT)$. \\
Then, to prove the converse inclusion, one has to check that
$$\forall \vc^0 \in \Kc, \quad b(\uc, \vc^0) = 0.$$ In the $\Hc^1(\Omega)$-framework, this condition followed from the antisymmetry of $b$ and the fact that $\Trc\uc$ was available. However, for $\uc \in \Hcurl$, $\Trc\uc$ is no longer defined and this argument breaks down. The key to resolving this difficulty is the density of $\Hc^1(\Omega)$ in $\Hcurl$ under Assumption~\ref{AssumpHcurlz3D}. This is made precise in the following lemma.

\begin{lemma}\label{LembuvHcurl}
Let $\Omega$ be a bounded, Stokes-regular $H^1$-extension domain of $\R^3$.
Then for all $\uc \in \Hcurl$ and all $\vc^0 \in \Kc$:
\begin{equation}\label{EqbuvHcurl}
    b(\uc, \vc^0) = 0.
\end{equation}
\end{lemma}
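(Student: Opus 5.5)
The plan is a density argument. The identity~\eqref{EqbuvHcurl} already holds for every $\uc \in \Hc^1(\Omega)$ by property~\eqref{EqNullitybuv}. So it suffices to approximate a general $\uc \in \Hcurl$ by smooth fields in the $\Hcurl$-norm, and to check that $b(\cdot,\vc^0)$ is continuous for that norm when $\vc^0$ is fixed.

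First, fix $\vc^0 \in \Kc$. For any $\uc \in \Hcurl$, the Cauchy--Schwarz estimate from the proof of Theorem~\ref{ThmTrTHcurl} gives
\begin{equation*}
|b(\uc,\vc^0)| \le \|\uc\|_{\Hcurl}\,\|\vc^0\|_{\Hcurl} \le \sqrt{2}\,\|\uc\|_{\Hcurl}\,\|\vc^0\|_{\Hc^1(\Omega)},
\end{equation*}
where the second inequality uses~\eqref{IneqCcurl}. Hence $\uc \mapsto b(\uc,\vc^0)$ is a continuous linear functional on $\Hcurl$.

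Second, since $\Omega$ is Stokes-regular, Proposition~\ref{DensHrot} yields a sequence $(\uc_k)_k \subset \Dc(\overline\Omega)^3$ with $\uc_k \to \uc$ in $\Hcurl$. Each $\uc_k$ is the restriction to $\Omega$ of a smooth compactly supported field on $\R^3$, so $\uc_k \in \Hc^1(\Omega)$. Therefore~\eqref{EqNullitybuv} applies and gives $b(\uc_k,\vc^0) = 0$ for every $k$. Passing to the limit with the continuity from the first step gives $b(\uc,\vc^0) = \lim_k b(\uc_k,\vc^0) = 0$, which is~\eqref{EqbuvHcurl}.

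The only non-routine ingredient is the density of $\Dc(\overline\Omega)^3$ (hence of $\Hc^1(\Omega)$) in $\Hcurl$. This is where Assumption~\ref{AssumpHcurlz3D} enters: Proposition~\ref{DensHrot} relies on it through Proposition~\ref{propDensKerTrT}. Without Stokes-regularity, the antisymmetry argument cannot be transferred from $\Hc^1(\Omega)$ to $\Hcurl$, which is exactly the difficulty pointed out before the statement. Once density is available, the rest is the two-line limit argument above.
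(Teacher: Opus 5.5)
Your proposal is correct and follows essentially the same route as the paper: density of $\Hc^1(\Omega)$ in $\Hcurl$ (Proposition~\ref{DensHrot}, which is where Stokes-regularity enters), continuity of $b$ in the $\Hcurl$ norm, and vanishing on $\Hc^1(\Omega)$ via~\eqref{EqNullitybuv}. The only difference is cosmetic. The paper passes to the limit in $b(\vc^0,\cdot)$ and applies antisymmetry at the end, whereas you pass to the limit directly in $b(\cdot,\vc^0)$.
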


\begin{proof}
Let $\vc^0 \in \Kc = \Ker(\TrT)$. By the Stokes formula~\eqref{EqStokesTrT}:
\begin{equation*}
    \forall \wc \in \Hc^1(\Omega), \quad b(\vc^0, \wc) = \langle \TrT\vc^0, \Trc\wc \rangle_{\Hbcm, \Hbc} = 0.
\end{equation*}
Since $\vc^0 \in \Hc^1(\Omega) \subset \Hcurl$, the map $\uc \mapsto b(\vc^0, \uc)$ is continuous on $\Hcurl$ by Theorem~\ref{ThGStokesF}. Under Assumption~\ref{AssumpHcurlz3D}, Proposition~\ref{DensHrot} implies that $\Hc^1(\Omega)$ is dense in $\Hcurl$. Furthermore, Equation~\eqref{EqNullitybuv} gives
\[
\forall \wc \in \Hc^1(\Omega), \quad b(\vc^0,\wc)=0.
\]
By continuity, it follows that
\[
\forall \uc \in \Hcurl, \quad b(\vc^0,\uc)=0.
\]
Finally, using the antisymmetry of $b$, we obtain
\[
b(\uc,\vc^0) = -b(\vc^0,\uc)=0.\qedhere
\]
\end{proof}

As a straightforward consequence of this lemma, we can
characterize the kernel of $\piT$.

\begin{corollary}\label{propDensKerpiT}
Let $\Omega$ be a bounded, Stokes-regular $H^1$-extension domain of $\R^3$. Then
\begin{equation}\label{HcurlzKerpiT}
  \Hcurlz = \Ker(\widetilde{\piT}).
\end{equation}
\end{corollary}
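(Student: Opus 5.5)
We prove the two inclusions separately, reducing everything to the characterization $\Ker(\widetilde\TrT) = \Hcurlz$ of Proposition~\ref{propDensKerTrT}, where $\widetilde\TrT$ takes values in $\Hbcm$.

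For the inclusion $\Hcurlz \subset \Ker(\widetilde\piT)$, let $\uc \in \Hcurlz$. By Proposition~\ref{propDensKerTrT} we have $\widetilde\TrT\uc = 0$ in $\Hbcm$. Then for every $\fc \in \Hbcp$ with optimal lift $\wc_f \in \Kc^{\perp_{\Hc^1}}$, the generalized Stokes formula~\eqref{EqStokesTrT} gives
$$\langle \widetilde\piT\uc, \fc\rangle_{\Hbcpm,\,\Hbcp} = -b(\uc,\wc_f) = -\langle \widetilde\TrT\uc, \Trc\wc_f\rangle_{\Hbcm,\,\Hbc} = 0.$$
Alternatively, one can approximate $\uc$ by elements of $\Dc(\Omega)$, for which $b(\cdot,\wc_f)=0$, and conclude by the continuity of $\widetilde\piT$ from Theorem~\ref{ThmPiTHcurl}.

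For the converse inclusion, let $\uc \in \Hcurl$ with $\widetilde\piT\uc = \mathbf 0$, so that $b(\uc,\wc)=0$ for all $\wc \in \Kc^{\perp_{\Hc^1}}$. Take an arbitrary $\vc \in \Hc^1(\Omega)$ and decompose it orthogonally as $\vc = \vc^0 + \vc^\perp$, with $\vc^0 \in \Kc$ and $\vc^\perp \in \Kc^{\perp_{\Hc^1}}$. Lemma~\ref{LembuvHcurl} gives $b(\uc,\vc^0)=0$, and the hypothesis gives $b(\uc,\vc^\perp)=0$. Hence $b(\uc,\vc)=0$ for all $\vc\in\Hc^1(\Omega)$. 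By~\eqref{EqStokesTrT} and the surjectivity of $\Trc$ onto $\Hbc$ (Theorem~\ref{ThTrisom}), this means $\widetilde\TrT\uc = 0$ in $\Hbcm$. Proposition~\ref{propDensKerTrT} then yields $\uc\in\Hcurlz$.

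The only nontrivial step is the vanishing of $b(\uc,\vc^0)$ on $\Kc$ for a field $\uc$ that is merely in $\Hcurl$. This is exactly where Stokes-regularity enters, through the density result of Proposition~\ref{DensHrot} used in Lemma~\ref{LembuvHcurl}. Since that lemma is already available, the corollary follows without further obstacle. The argument also shows that $\Ker(\widetilde\piT)$ and $\Ker(\widetilde\TrT)$ coincide in this setting.
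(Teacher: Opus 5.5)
Your proof is correct and follows the paper's own reasoning. The inclusion $\Hcurlz \subset \Ker(\widetilde\piT)$ follows from $\widetilde\TrT\uc = 0$ tested against the optimal lifts $\wc_f \in \Kc^{\perp_{\Hc^1}}$. The converse uses the splitting $\vc = \vc^0 + \vc^\perp$ and Lemma~\ref{LembuvHcurl} to upgrade $\widetilde\piT\uc = \mathbf{0}$ to $\widetilde\TrT\uc = 0$ in $\Hbcm$, and then concludes with Proposition~\ref{propDensKerTrT}.

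One minor remark: the first inclusion does not need Stokes-regularity, since $\Hcurlz \subset \Ker(\widetilde\TrT)$ holds for every bounded $H^1$-extension domain.
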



\section{Tangential operators}\label{SecTangOp}

In this section, we aim to generalize the tangential operators $\nabla_T$, $\divT$, $\curlcT$ and $\curlT$ from the Lipschitz case (see~\cite{BUFFA-2002} and~\cite{Ciarlet_2024} for details). By the identity $\rotv \nabla \cdot = 0$, we know that for any $v\in H^1(\Omega)$, the gradient $\nabla v$ belongs to $\Hcurl$. This motivates the use of the $\Hcurl$ version of the trace operators in a first step. Then, in a similar way as in~\cite[Section 3]{BUFFA-2002}, we define alternative versions of $\nabla_T$ and $\curlcT$ by restriction, allowing for a more precise definition of their adjoints.

\subsection{\texorpdfstring{$\Hcurl$-framework}{H(curl,Omega)-framework}}

\subsubsection{Tangential gradient and tangential divergence}\label{subsec_Hcurl_tang_grad+div}

To define the tangential gradient $\widetilde{\nabla_T}$, we adapt~\cite[Theorem 2.22]{Ciarlet_2024}. We recall that the tilde notation indicates the $\Hcurl$-based versions of the tangential operators, namely $\widetilde{\TrT}$ and $\widetilde{\piT}$; see Theorems~\ref{ThmTrTtoBtm} and~\ref{ThmPiTHcurl}.

\begin{theorem}\label{ThmNablaTtilde}
Let $\Omega$ be a bounded $H^1$-extension domain of $\R^3$. Then the tangential gradient operator
\begin{align*}
    \widetilde{\nabla_T} \colon \Hb &\to \Hbcpm\\
    \Tr v & \mapsto \widetilde{\nabla_T}(\Tr v) = \widetilde{\piT}(\nabla v)
\end{align*}
is a well-defined, linear and continuous mapping.
\end{theorem}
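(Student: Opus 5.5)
The key issue is well-definedness: the formula $\widetilde{\nabla_T}(\Tr v) = \widetilde{\piT}(\nabla v)$ is stated in terms of a preimage $v$, so we must show that it depends only on $\Tr v$. Linearity and continuity will then follow from Theorem~\ref{ThmPiTHcurl} together with the quotient norm~\eqref{normTri}.

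\emph{Step 1: the definition makes sense for each $v$.} For $v\in H^1(\Omega)$ we have $\nabla v\in\Lc^2(\Omega)$ and $\rotv\nabla v=\mathbf{0}$ in $\D'(\Omega)$. Hence $\nabla v\in\Hcurl$ with $\|\nabla v\|_{\Hcurl}=\|\nabla v\|_{\Lc^2(\Omega)}\le\|v\|_{H^1(\Omega)}$, and $\widetilde{\piT}(\nabla v)\in\Hbcpm$ is defined by Theorem~\ref{ThmPiTHcurl}.

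\emph{Step 2: independence of the preimage (the main point).} Suppose $\Tr v_1=\Tr v_2$. Then $\psi:=v_1-v_2\in\Ker\Tr=H^1_0(\Omega)$ by Theorem~\ref{ThTrisom}, so it suffices to show $\widetilde{\piT}(\nabla\psi)=\mathbf{0}$ for $\psi\in H^1_0(\Omega)$. Take $\psi_k\in\D(\Omega)$ with $\psi_k\to\psi$ in $H^1(\Omega)$. Then $\nabla\psi_k\in\Dc(\Omega)\subset\Hc^1_0(\Omega)$, which lies in $\Kc=\Ker(\TrT)=\Ker(\piT)$ by Lemma~\ref{InclH10TrtTrn} and Proposition~\ref{PropSameKer}. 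Since $\widetilde{\piT}$ extends $\piT$ on $\Hc^1(\Omega)$, this gives $\widetilde{\piT}(\nabla\psi_k)=\mathbf{0}$.

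A direct check avoids even this: for $\boldsymbol\phi\in\Dc(\Omega)$ and any $\wc\in\Hc^1(\Omega)$, the definition of the distributional curl gives $b(\boldsymbol\phi,\wc)=0$. Either way, $\nabla\psi_k\to\nabla\psi$ in $\Hcurl$ because the curls vanish. By continuity of $\widetilde{\piT}$ (bound $\sqrt2$), $\widetilde{\piT}(\nabla\psi)=\lim_k\widetilde{\piT}(\nabla\psi_k)=\mathbf{0}$. This step needs no Stokes-regularity assumption, because the approximants are compactly supported.

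\emph{Step 3: linearity and continuity.} Linearity follows from linearity of $\nabla$ and $\widetilde{\piT}$, together with Step 2. For continuity, fix $g\in\Hb$ and any $v$ with $\Tr v=g$. Then
\begin{equation*}
\|\widetilde{\nabla_T}g\|_{\Hbcpm}\le\sqrt2\,\|\nabla v\|_{\Hcurl}\le\sqrt2\,\|v\|_{H^1(\Omega)}.
\end{equation*}
Taking the minimum over all such $v$ (attained by the $1$-harmonic lift, Theorem~\ref{ThTrisom}) gives $\|\widetilde{\nabla_T}g\|_{\Hbcpm}\le\sqrt2\,\|g\|_{\Hb}$.

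The only delicate point is Step 2. There, the membership of $\nabla\psi$ for $\psi\in H^1_0(\Omega)$ in the kernel must be handled by density in $\Hcurl$ rather than by Corollary~\ref{propDensKerpiT}, which would require the Stokes-regularity assumption.
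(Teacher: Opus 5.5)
Your proof is correct, and its overall structure is the paper's: you reduce well-definedness to $\psi\in\Ker\Tr=H^1_0(\Omega)$, then bound $\|\widetilde{\nabla_T}g\|_{\Hbcpm}\le\sqrt2\,\|v\|_{H^1(\Omega)}$ and minimize over lifts. The only difference is how you show $\widetilde{\piT}(\nabla\psi)=\mathbf 0$.

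The paper does this in one line, with no limiting argument. Since $\rotv\nabla\psi=\mathbf 0$, one has $\langle\widetilde{\piT}(\nabla\psi),\fc\rangle=-\int_\Omega\nabla\psi\cdot\rotv\wc_f\dx$. The field $\rotv\wc_f$ lies in $\Hdiv$ with zero divergence, so the normal-trace Green formula \eqref{EqItByPn}, tested with $\psi$ (whose trace vanishes), gives zero.

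You instead approximate $\psi$ by $\psi_k\in\D(\Omega)$ and note that $\widetilde{\piT}$ annihilates the compactly supported fields $\nabla\psi_k$. Your direct check $b(\boldsymbol\phi,\wc)=0$ for $\boldsymbol\phi\in\Dc(\Omega)$ is the cleaner of your two justifications, since it bypasses Lemma~\ref{InclH10TrtTrn} and Proposition~\ref{PropSameKer}. You then pass to the limit using the $\Hcurl$-continuity of $\widetilde{\piT}$.

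The two arguments are equivalent at bottom. The paper's Green formula with a zero-trace test function is itself justified by the density of $\D(\Omega)$ in $H^1_0(\Omega)$, which is how independence of the extension operator was proved for the normal trace. So your version unfolds that step inside the curl framework, at the cost of a limit. The paper's version gains brevity by reusing Theorem~\ref{ThNormalTrace}.

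Your closing remark is right that no Stokes-regularity is needed. However, density in $\Hcurl$ is not the only way around Corollary~\ref{propDensKerpiT}: the paper's Green-formula route avoids it just as well.
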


\begin{proof}
Let us show that this is well-defined. First, using the identity $\rotv \nabla \cdot = 0$, we observe that for any $v\in H^1(\Omega)$, $\nabla v \in \Hcurl$, so $\widetilde{\piT}(\nabla v)$ is well-defined. We now need to show that this definition does not depend on the choice of representative. Let $v_1, v_2 \in H^1(\Omega)$ be such that $\Tr v_1 = \Tr v_2$, and define $v := v_1 - v_2 \in H^1_0(\Omega)$. We have for any $\fc \in \Hbcp$:
\begin{align*}
    \langle \widetilde{\nabla_T}(\Tr v), \fc \rangle_{\Hbcpm,\,\Hbcp} &= \langle \widetilde{\piT}(\nabla v), \fc \rangle_{\Hbcpm,\,\Hbcp}\\
    &= - \int_\Omega \nabla v \cdot \rotv \wc_f \dx \text{ by formula~\eqref{EqDefPiTHcurl}}\\
    &= \int_\Omega v \, \divnesp(\rotv \wc_f) \dx 
    \text{ by formula~\eqref{EqGreenIntBis}} \\
    &= 0, 
\end{align*}
so $\widetilde{\nabla_T}(\Tr v_1) = \widetilde{\nabla_T}(\Tr v_2)$.\\
The linearity is an immediate consequence of the linearity of $\widetilde{\piT}$ and $\nabla$. So there remains to prove the continuity. By Theorem~\ref{ThmPiTHcurl}, we have
\begin{equation*}
    \forall u \in H^1(\Omega), \|\widetilde{\piT}(\nabla u)\|_{\Hbcpm} \leq \sqrt{2} \|\nabla u\|_{\Hcurl} = \sqrt{2} \|\nabla u\|_{\Lc^2(\Omega)} \leq \sqrt{2} \|u\|_{H^1(\Omega)}.
\end{equation*}
Hence, for all $u \in H^1(\Omega)$, we have
\begin{equation*}
    \|\widetilde{\nabla_T} (\Tr u)\|_{\Hbcpm} \leq \sqrt{2} \|u\|_{H^1(\Omega)},
\end{equation*}
which implies by~\eqref{normTri} that
\begin{equation}
    \|\widetilde{\nabla_T} (\Tr u)\|_{\Hbcpm} \leq \sqrt{2} \|\Tr u\|_{\Hb}.
\end{equation}
This proves the continuity of $\widetilde{\nabla_T}$.
\end{proof}

Since $\widetilde{\nabla_T}$ is a bounded linear operator, we can define its adjoint, which is also linear and bounded.
A similar construction appears in~\cite[Proposition 3.6]{BUFFA-2002}.

\begin{definition}\label{DefDivTtilde}
Let $\Omega$ be a bounded $H^1$-extension domain of $\R^3$. We define the tangential divergence operator $\widetilde{\divT} : \Hbcp \to \Hbm$ as the adjoint of $-\widetilde{\nabla_T}$. It is a linear and bounded operator satisfying
\begin{equation*}
    \forall \fc\in \Hbcp,\, \forall z \in \Hb, \langle \widetilde\divT\fc, z\rangle_{\Hbm,\, \Hb} = -\langle \widetilde{\nabla_T} z, \fc\rangle_{\Hbcpm,\, \Hbcp}.
\end{equation*}
\end{definition}

We now provide a formula linking the normal trace and tangential divergence operators for elements of $\Hc^1(\Omega)$, as in~\cite[Theorem 2.26]{Ciarlet_2024}.

\begin{proposition}\label{PropTrnCurlDivTtilde}
Let $\Omega$ be a bounded $H^1$-extension domain of $\R^3$. For all $\vc \in \Hc^1(\Omega)$, the following equality holds in $\Hbm$:
\begin{equation}\label{EqTrnCurlDivTtilde}
    \trn (\rotv \vc) = \widetilde{\divT} (\TrT \vc). 
\end{equation}
\end{proposition}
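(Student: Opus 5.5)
We test both sides of \eqref{EqTrnCurlDivTtilde} against an arbitrary $z=\Tr u\in\Hb$ with $u\in H^1(\Omega)$. Both sides are bounded linear functionals on $\Hb$, and $\Tr$ is surjective by Theorem~\ref{ThTrisom}, so it suffices to show that the two pairings agree for every $u\in H^1(\Omega)$.

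\emph{Left-hand side.} Since $\vc\in\Hc^1(\Omega)$, we have $\rotv\vc\in\Lc^2(\Omega)$ and $\div(\rotv\vc)=0$ in $\D'(\Omega)$, hence $\rotv\vc\in\Hdiv$. Formula~\eqref{EqItByPn} then gives
\begin{equation*}
\langle \trn(\rotv\vc),\Tr u\rangle_{\Hbm,\,\Hb}=\int_\Omega \rotv\vc\cdot\nabla u\dx .
\end{equation*}

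\emph{Right-hand side.} By Definition~\ref{DefDivTtilde},
\begin{equation*}
\langle\widetilde\divT(\TrT\vc),\Tr u\rangle=-\langle\widetilde{\nabla_T}(\Tr u),\TrT\vc\rangle_{\Hbcpm,\,\Hbcp}=-\langle\widetilde\piT(\nabla u),\TrT\vc\rangle_{\Hbcpm,\,\Hbcp}.
\end{equation*}
By \eqref{EqDefPiTHcurl}, this equals $b(\nabla u,\wc_f)$, where $\wc_f=\vc^\perp$ is the optimal lift of $\fc=\TrT\vc$, i.e.\ the $\Hc^1$-orthogonal projection of $\vc$ onto $\Kc^{\perp_{\Hc^1}}$. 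Since $\rotv\nabla u=0$, we get
\begin{equation*}
b(\nabla u,\vc^\perp)=\int_\Omega\nabla u\cdot\rotv\vc^\perp\dx .
\end{equation*}

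\emph{Main step.} The remaining task is to replace $\vc^\perp$ by $\vc$, i.e.\ to show that
\begin{equation*}
\int_\Omega\nabla u\cdot\rotv\vc^0\dx=0\quad\text{for } \vc^0=\vc-\vc^\perp\in\Kc.
\end{equation*}
I do not expect Lemma~\ref{LembuvHcurl} to be needed here, which is fortunate since it would require Stokes-regularity. The idea is to use the antisymmetry of $b$ with $\nabla u$ placed in the second slot:
\begin{equation*}
b(\vc^0,\nabla u)=\int_\Omega\bigl(\vc^0\cdot\rotv\nabla u-\nabla u\cdot\rotv\vc^0\bigr)\dx=-\int_\Omega\nabla u\cdot\rotv\vc^0\dx .
\end{equation*}
The difficulty is that the Stokes formula~\eqref{EqStokesTrT} requires its second argument to lie in $\Hc^1(\Omega)$, whereas $\nabla u$ is only in $\Hcurl$. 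To get around this, I would approximate $u$ by $u_k\in\D(\overline\Omega)$ in $H^1(\Omega)$, which is possible by the density stated earlier. Then $\nabla u_k\in\Hc^1(\Omega)$, and since $\TrT\vc^0=0$,
\begin{equation*}
b(\vc^0,\nabla u_k)=\langle\TrT\vc^0,\Trc\nabla u_k\rangle=0 .
\end{equation*}
Because $\nabla u_k\to\nabla u$ in $\Lc^2(\Omega)$ and $\rotv\nabla u_k=0$, we have $\nabla u_k\to\nabla u$ in $\Hcurl$. The map $\wc\mapsto b(\vc^0,\wc)$ is continuous on $\Hcurl$ (its second argument enters only through $\|\wc\|_{\Lc^2}$ and $\|\rotv\wc\|_{\Lc^2}$), so passing to the limit gives $b(\vc^0,\nabla u)=0$, i.e.\ $\int_\Omega\nabla u\cdot\rotv\vc^0\dx=0$.

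\emph{Conclusion.} The last step gives $\int_\Omega\nabla u\cdot\rotv\vc^\perp\dx=\int_\Omega\nabla u\cdot\rotv\vc\dx$. Hence
\begin{equation*}
\langle\widetilde\divT(\TrT\vc),\Tr u\rangle=\int_\Omega\nabla u\cdot\rotv\vc\dx=\langle\trn(\rotv\vc),\Tr u\rangle
\end{equation*}
for every $u\in H^1(\Omega)$. Surjectivity of $\Tr$ onto $\Hb$ then yields \eqref{EqTrnCurlDivTtilde}. The one point that needs care is keeping the sign conventions of $b$ and of the adjoint definition consistent throughout; the approximation argument itself is routine.
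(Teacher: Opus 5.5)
Your proof is correct and follows essentially the same route as the paper. You test against $z=\Tr u$, split $\vc=\vc^0+\vc^\perp$, identify the right-hand side with $b(\nabla u,\vc^\perp)$, and remove $b(\nabla u,\vc^0)$ by first handling smooth gradients (where antisymmetry and the Stokes formula with $\TrT\vc^0=\mathbf{0}$ apply) and then passing to the limit by density. The only difference is cosmetic: the paper phrases the density step as density of $\Tr(H^2(\Omega))$ in $\Hb$, concluding $\trn(\rotv\vc^0)=0$, whereas you pass to the limit directly in $\int_\Omega\nabla u_k\cdot\rotv\vc^0\dx$, which is equally valid and slightly more direct.
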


\begin{proof}
Let $\vc \in \Hc^1(\Omega)$ and $z = \Tr u \in \Hb$.
Since $\rotv\vc \in \Hdiv$ (as $\div\rotv\vc = 0$), the Green's
formula~\eqref{EqItByPn} gives
\begin{equation*}
    \langle \trn(\rotv\vc), z \rangle_{\Hbm,\, \Hb}
    = \int_\Omega \rotv\vc \cdot \nabla u \dx
    = b(\nabla u, \vc),
\end{equation*}
where the last equality uses $\rotvnesp(\nabla u) = \mathbf{0}$.

Decompose $\vc = \vc^0 + \vc^\perp$ with $\vc^0 \in \Kc$ and $\vc^\perp \in \Kc^{\perp_{\Hc^1}}$ the optimal lift of $\TrT\vc$. We claim that $b(\nabla u, \vc^0) = 0$. Indeed, for any $u' \in H^2(\Omega)$, one has $\nabla u' \in \Hc^1(\Omega)$, so property~\eqref{EqNullitybuv} gives $b(\nabla u', \vc^0) = 0$, i.e.
$$\langle \trn(\rotv\vc^0), \Tr u' \rangle_{\Hbm,\, \Hb} = 0.$$ 
Since $\{\Tr u' : u' \in H^2(\Omega)\}$ is dense in $\Hb$ (as $\Omega$ is an $H^1$-extension domain), this gives $\trn(\rotv\vc^0) = 0$ in $\Hbm$, and therefore for all $u \in H^1(\Omega)$ we have 
$$b(\nabla u, \vc^0) = \langle \trn(\rotv\vc^0), \Tr u \rangle_{\Hbm,\, \Hb} = 0.$$
Hence $b(\nabla u, \vc) = b(\nabla u, \vc^\perp)$. First, by definition~\eqref{EqDefPiTHcurl} of $\widetilde\piT$, then by definition of $\widetilde{\nabla_T}$, resp. $\widetilde\divT$:
\begin{align*}
    b(\nabla u, \vc^\perp)
    &= -\langle \widetilde\piT(\nabla u), \TrT\vc \rangle_{\Hbcpm,\, \Hbcp}\\
    &= -\langle \widetilde\nabla_T z, \TrT\vc \rangle_{\Hbcpm,\, \Hbcp}\\
    &= \langle \widetilde\divT(\TrT\vc), z\rangle_{\Hbm,\, \Hb}. \qedhere
\end{align*}
\end{proof}

\begin{remark}\label{RemExtensionHcurl}
For a general field $\vc \in \Hcurl$, the tangential trace $\TrT \vc$ belongs only to $\Hbcm$, lacking the $\Hbcp$ regularity required to apply the tangential divergence $\widetilde{\divT}$. Thus, the right-hand side of~\eqref{EqTrnCurlDivTtilde} is strictly defined only for $\vc \in \Hc^1(\Omega)$. 
However, the left-hand side $\trn(\rotv\vc)$ is well-defined and continuous on all of $\Hcurl$ (since $\rotv\vc \in \Hdiv$). By the density of $\Hc^1(\Omega)$ in $\Hcurl$ (under the Stokes-regular assumption, see Proposition~\ref{DensHrot}), identity~\eqref{EqTrnCurlDivTtilde} implies that the operator $\widetilde{\divT} \circ \TrT$ extends uniquely by continuity from $\Hc^1(\Omega)$ to the entire space $\Hcurl$, and this extension coincides precisely with $\trn \circ \,\rotv$.
\end{remark}

\subsubsection{Tangential curl operators}

The definitions of $\widetilde\curlcT$ and $\widetilde\curlT$ are analogous to those of $\widetilde{\nabla_T}$ and $\widetilde\divT$, but use the tangential trace $\widetilde{\TrT}$ instead of $\widetilde{\piT}$.

\begin{theorem}\label{ThmCurlcTtilde}
Let $\Omega$ be a bounded $H^1$-extension domain of $\R^3$. Then the tangential curl operator
\begin{align*}
    \widetilde\curlcT \colon \Hb &\to \Hbctm\\
    \Tr v & \mapsto \widetilde\curlcT(\Tr v) = \widetilde{\TrT}(\nabla v)
\end{align*}
is a well-defined, linear and continuous mapping.
\end{theorem}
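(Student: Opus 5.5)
We mirror the proof of Theorem~\ref{ThmNablaTtilde}, replacing $\widetilde\piT$ by the operator $\widetilde\TrT : \Hcurl \to \Hbctm$ of Theorem~\ref{ThmTrTtoBtm}. The first step is to check that the operator makes sense. Since $\rotvnesp(\nabla v) = \mathbf{0}$, every $v \in H^1(\Omega)$ gives $\nabla v \in \Hcurl$ with $\|\nabla v\|_{\Hcurl} = \|\nabla v\|_{\Lc^2(\Omega)}$. Hence $\widetilde\TrT(\nabla v) \in \Hbctm$ is defined by~\eqref{EqDefTrTonBt}: for $\fc \in \Hbct$ with optimal lift $\wc_f \in \Kc^{\perp_{\Hc^1}}$,
\[
\langle \widetilde\TrT(\nabla v), \fc\rangle_{\Hbctm,\,\Hbct} = b(\nabla v, \wc_f) = \int_\Omega \nabla v \cdot \rotv \wc_f \dx .
\]
The term $\wc_f\cdot\rotv\nabla v$ in $b$ drops out.

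Next comes independence of the representative, which is the only real issue. Let $v_1, v_2 \in H^1(\Omega)$ with $\Tr v_1 = \Tr v_2$. By Theorem~\ref{ThTrisom}, item~\ref{Trisom2}, the difference $v := v_1 - v_2$ lies in $H^1_0(\Omega)$. The field $\rotv \wc_f$ belongs to $\Hdiv$ with $\div(\rotv\wc_f) = 0$. Apply the Green formula~\eqref{EqItByPn} to $\uc = \rotv\wc_f$ and the test function $v$. Its left-hand side is $\langle \trn(\rotv\wc_f), \Tr v\rangle = 0$, because $\Tr v = 0$. This gives
\[
\int_\Omega \rotv\wc_f \cdot \nabla v \dx = -\int_\Omega v\,\div(\rotv\wc_f)\dx = 0 .
\]
Alternatively, one can approximate $v$ by elements of $\D(\Omega)$ and use the distributional identity $\div\rotv = 0$. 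Either way, $\widetilde\TrT(\nabla v_1) = \widetilde\TrT(\nabla v_2)$ in $\Hbctm$. One point needs care: the duality is against $\Hbct$, so we must use that the pairing is computed with the unique optimal lift $\wc_f$, which lies in $\Hc^1(\Omega)$. It does not matter that we only test against $\Hbct$ rather than all of $\Hbc$.

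Linearity follows from the linearity of $\nabla$ and $\widetilde\TrT$. For continuity, the estimate~\eqref{EqContTrTBtm} gives, for every $u\in H^1(\Omega)$,
\[
\|\widetilde\TrT(\nabla u)\|_{\Hbctm} \le \sqrt2\,\|\nabla u\|_{\Hcurl} = \sqrt2\,\|\nabla u\|_{\Lc^2(\Omega)} \le \sqrt2\,\|u\|_{H^1(\Omega)} .
\]
The left-hand side depends only on $\Tr u$. Taking the minimum over all pre-images as in~\eqref{normTri} yields $\|\widetilde\curlcT(\Tr u)\|_{\Hbctm} \le \sqrt2\,\|\Tr u\|_{\Hb}$.

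The only step that is not routine is the well-definedness. Even there, the single ingredient is that $\rotv\wc_f$ is divergence-free and in $\Lc^2$, so it pairs to zero with gradients of $H^1_0(\Omega)$ functions. This is exactly the computation already used for $\widetilde{\nabla_T}$.
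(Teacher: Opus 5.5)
Your proof is correct and follows the same route as the paper. The paper just says to repeat the argument of Theorem~\ref{ThmNablaTtilde} with $\Hbctm$ in place of $\Hbcpm$, using \eqref{EqDefTrTonBt} and Theorem~\ref{ThmTrTtoBtm}; you carry this out explicitly: $\int_\Omega \nabla v\cdot\rotv\wc_f\dx = 0$ for $v\in H^1_0(\Omega)$ by the Green formula applied to the divergence-free field $\rotv\wc_f$, and continuity follows from \eqref{EqContTrTBtm} together with the trace norm \eqref{normTri}.
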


\begin{proof}
The proof follows the same lines as that of Theorem~\ref{ThmNablaTtilde}, with $\Hbctm$ in place of $\Hbcpm$, and relies on formula~\eqref{EqDefTrTonBt} together with Theorem~\ref{ThmTrTtoBtm}.
\end{proof}

Since $\widetilde\curlcT$ is a bounded linear operator, we can define its adjoint, which is also linear and bounded. A similar construction is presented in~\cite[Proposition 3.6]{BUFFA-2002}.

\begin{definition}\label{DefCurlTtilde}
Let $\Omega$ be a bounded $H^1$-extension domain of $\R^3$. We define the scalar tangential curl operator $\widetilde\curlT : \Hbct \to \Hbm$ as the adjoint of $\widetilde\curlcT$. It is a linear and bounded operator satisfying
\begin{equation*}
    \forall \fc \in \Hbct,\, \forall z \in \Hb, \langle\widetilde\curlT\fc, z\rangle_{\Hbm,\, \Hb} = \langle\widetilde\curlcT z, \fc\rangle_{\Hbctm,\, \Hbct}.
\end{equation*}
\end{definition}

We now provide a formula linking the normal trace and the scalar tangential curl operators. Here again, we must consider an element of $\Hc^1(\Omega)$. 

\begin{proposition}\label{PropTrnCurlCurlTtilde}
Let $\Omega$ be a bounded $H^1$-extension domain of $\R^3$. For all $\vc \in \Hc^1(\Omega)$, the following equality holds in $\Hbm$:
\begin{equation}\label{EqTrnCurlCurlTtilde}
    \trn (\rotv \vc) = \widetilde\curlT (\piT \vc). 
\end{equation}
\end{proposition}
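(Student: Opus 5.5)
We follow the proof of Proposition~\ref{PropTrnCurlDivTtilde}, with $\widetilde\TrT$ and $\piT$ in place of $\widetilde\piT$ and $\TrT$, and with formula~\eqref{EqDefTrTonBt} in place of~\eqref{EqDefPiTHcurl}. Fix $\vc \in \Hc^1(\Omega)$ and $z = \Tr u \in \Hb$ with $u \in H^1(\Omega)$.

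\emph{Step 1: reduce the left-hand side to $b$.} Since $\div\rotv\vc = 0$, Green's formula~\eqref{EqItByPn} gives
\begin{equation*}
\langle \trn(\rotv\vc), z\rangle_{\Hbm,\,\Hb} = \int_\Omega \rotv\vc\cdot\nabla u \dx = b(\nabla u, \vc),
\end{equation*}
where the last equality uses $\rotvnesp(\nabla u) = \mathbf{0}$.

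\emph{Step 2: drop the $\Kc$-component of $\vc$.} Decompose $\vc = \vc^0 + \vc^\perp$ with $\vc^0 \in \Kc$ and $\vc^\perp \in \Kc^{\perp_{\Hc^1}}$. By Theorem~\ref{ThBT}, $\vc^\perp$ is the optimal lift of $\piT\vc \in \Hbct$. We need $b(\nabla u,\vc^0) = 0$, and we take this verbatim from the proof of Proposition~\ref{PropTrnCurlDivTtilde}. For $u' \in H^2(\Omega)$ we have $\nabla u' \in \Hc^1(\Omega)$, so~\eqref{EqNullitybuv} gives $\langle\trn(\rotv\vc^0),\Tr u'\rangle_{\Hbm,\,\Hb} = 0$. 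Density of such traces in $\Hb$ then yields $\trn(\rotv\vc^0) = 0$, hence $b(\nabla u,\vc^0) = 0$. Consequently $b(\nabla u,\vc) = b(\nabla u,\vc^\perp)$.

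\emph{Step 3: identify the right-hand side.} Since $\nabla u \in \Hcurl$ and $\vc^\perp$ is the optimal lift of $\piT\vc$, definition~\eqref{EqDefTrTonBt} gives
\begin{equation*}
b(\nabla u,\vc^\perp) = \langle \widetilde\TrT(\nabla u), \piT\vc\rangle_{\Hbctm,\,\Hbct} = \langle\widetilde\curlcT z, \piT\vc\rangle_{\Hbctm,\,\Hbct},
\end{equation*}
using Theorem~\ref{ThmCurlcTtilde}. By Definition~\ref{DefCurlTtilde} this equals $\langle\widetilde\curlT(\piT\vc), z\rangle_{\Hbm,\,\Hb}$. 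Since $z\in\Hb$ is arbitrary, this proves~\eqref{EqTrnCurlCurlTtilde}.

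\emph{Signs.} The sign conventions work out directly. In Proposition~\ref{PropTrnCurlDivTtilde} the minus in the definition of $\widetilde\piT$ cancelled the minus in the adjoint of $-\widetilde{\nabla_T}$. Here neither definition carries a sign, so no compensation is needed.

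\emph{Main obstacle.} The delicate point is Step 2: the density of $\{\Tr u' : u'\in H^2(\Omega)\}$ in $\Hb$. It follows from the density of $\D(\overline\Omega)$ in $H^1(\Omega)$, since restrictions of $\D(\R^n)$ functions lie in $H^2(\Omega)$, together with continuity of $\Tr$. Because this is exactly the lemma already used in the preceding proposition, it can simply be cited.
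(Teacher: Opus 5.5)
Your proof is correct and is exactly the argument the paper intends. The paper's proof just says to repeat the proof of Proposition~\ref{PropTrnCurlDivTtilde} with \eqref{EqDefTrTonBt} in place of \eqref{EqDefPiTHcurl}, which is what you do, including the density argument that discards the $\Kc$-component and the sign-free use of Definition~\ref{DefCurlTtilde}. Your justification of the density of $\{\Tr u' : u'\in H^2(\Omega)\}$ in $\Hb$ via $\D(\overline\Omega)\subset H^2(\Omega)$ fills in a detail the paper leaves implicit.
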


\begin{proof}
The proof is similar to that of Proposition~\ref{PropTrnCurlDivTtilde}, using Equation~\eqref{EqDefTrTonBt} instead of~\eqref{EqDefPiTHcurl}.
\end{proof}

\subsection{\texorpdfstring{$\Hc^1(\Omega)$-framework}{H1(Omega)-framework}}

Following the approach of~\cite[Proposition 3.4]{BUFFA-2002} (see also~\cite[Remark 2.23]{Ciarlet_2024}), we now consider appropriate restrictions of the tangential operators $\widetilde{\nabla_T}$, $\widetilde\curlcT$ and their adjoints. To that aim, throughout this section we assume that $\Omega$ is an $H^1,H^2$-extension domain, by which we mean that $\Omega$ is both an $H^1$-extension domain and an $H^2$-extension domain (for the description see~\cite[Theorem~5]{HAJLASZ-2008}). In particular, there exists a bounded linear extension operator $E_2 : H^2(\Omega) \longrightarrow H^2(\R^n)$. This assumption is satisfied, for instance, by all $(\varepsilon,\delta)$-domains (see~\cite[Theorem~1]{JONES-1981} and~\cite[Theorem~8]{ROGERS-2006}). Under this hypothesis, we can identify $H^2(\Omega)=W^{2,2}(\Omega)$ with the space $H^{2,2}(\Omega)$ introduced in~\cite[Section~2]{HINZ-2023}. We then introduce the following space:
\begin{equation}\label{DefCpartialOmega}
    \Cb := \Tr[H^2(\Omega)],
\end{equation}
which is a Hilbert space endowed with the norm
\begin{equation}\label{normTrH2}
\left\|f\right\|_{\Cb}:=\min\{ \left\|v\right\|_{H^2(\Omega)}\mid f=\mathrm{Tr}\:v\}
\end{equation}
(see~\cite[Section 2]{HINZ-2023} for details).

\subsubsection{Tangential gradient and tangential divergence}

Referring back to 
the introductory paragraph of Section~\ref{subsec_Hcurl_tang_grad+div},
we observe that using the $\Hc^1(\Omega)$ version of $\piT$ requires $\nabla v$ in $\Hc^1(\Omega)$. This motivates restricting $\widetilde{\nabla_T}$ to $\Cb$. This approach is consistent with the regular case, where restrictions from $H^{3/2}(\partial\Omega)$, i.e. the space of traces of elements of $H^2(\Omega)$, are typically considered. Since we now use the $\Hc^1(\Omega)$ version of $\piT$, we can directly restrict the codomain to $\Hbct$.

\begin{theorem}\label{ThmNablaT}
Let $\Omega$ be a bounded $H^1,H^2$-extension domain of $\R^3$. Then the restricted tangential gradient operator
\begin{align*}
    \nabla_T \colon \Cb &\to \Hbct\\
    \Tr v & \mapsto \nabla_T(\Tr v) = \piT(\nabla v)
\end{align*}
is a well-defined, linear and continuous mapping.
\end{theorem}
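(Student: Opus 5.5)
The plan follows the proof of Theorem~\ref{ThmNablaTtilde}, now using the $\Hc^1(\Omega)$ version of $\piT$ from Definition~\ref{DefPiTH1} and its target space $\Hbct$ from Definition~\ref{DefBT}.

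\emph{Step 1: the formula makes sense.} For $v\in H^2(\Omega)$ we have $\nabla v\in\Hc^1(\Omega)$. Hence $\piT(\nabla v)$ is defined and lies in $\Hbct=\piT(\Hc^1(\Omega))$ by definition, and
$\|\nabla v\|_{\Hc^1(\Omega)}\le\|v\|_{H^2(\Omega)}$.

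\emph{Step 2: independence of the representative.} Take $v_1,v_2\in H^2(\Omega)$ with $\Tr v_1=\Tr v_2$, and set $v:=v_1-v_2\in H^2(\Omega)\cap H^1_0(\Omega)$ (Theorem~\ref{ThTrisom}). It suffices to show $\piT(\nabla v)=\mathbf 0$ in $\Hbcpm$. For $\fc\in\Hbcp$ with optimal lift $\wc_f$, Definition~\ref{DefPiTH1} gives
\[
\langle \piT(\nabla v),\fc\rangle=-b(\nabla v,\wc_f)=-\int_\Omega \nabla v\cdot\rotv\wc_f\dx ,
\]
because $\rotv\nabla v=\mathbf 0$. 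Since $\rotv\wc_f\in\Hdiv$ with zero divergence, Green's formula~\eqref{EqItByPn} rewrites this as $-\langle\trn(\rotv\wc_f),\Tr v\rangle_{\Hbm,\Hb}$. This vanishes because $\Tr v=0$, exactly as in the proof of Theorem~\ref{ThmNablaTtilde}.

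As a cross-check, Proposition~\ref{PropSameKer} gives the same conclusion if $\nabla v\in\Ker(\TrT)$. By Stokes~\eqref{EqStokesTrT}, $\langle\TrT\nabla v,\Trc\wc\rangle=-\int_\Omega \wc\cdot\rotv\nabla v\dx+\int_\Omega\nabla v\cdot\rotv\wc\dx=\int_\Omega\nabla v\cdot\rotv\wc\dx$. This equals $\langle\trn(\rotv\wc),\Tr v\rangle=0$.

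\emph{Step 3: linearity and continuity.} Linearity follows from that of $\nabla$ and $\piT$. For continuity, write $\vc=\nabla v$ and decompose $\vc=\vc^0+\vc^\perp$ with $\vc^0\in\Kc$ and $\vc^\perp\in\Kc^{\perp_{\Hc^1}}$. The optimal lift of $\piT\vc$ is $\vc^\perp$, so the isometry~\eqref{EqIsomPiT} (equivalently the quotient norm~\eqref{EqQuotNormBT}) gives
\[
\|\piT(\nabla v)\|_{\Hbct}\le\|\nabla v\|_{\Hc^1(\Omega)}\le\|v\|_{H^2(\Omega)} .
\]
Taking the infimum over all $v\in H^2(\Omega)$ with $\Tr v=f$ and using~\eqref{normTrH2} yields $\|\nabla_T f\|_{\Hbct}\le\|f\|_{\Cb}$.

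The only point needing care is Step~2. The bound in Step~3 involves only the $H^2$ representative, so well-definedness on the quotient $\Cb$ rests entirely on the fact that $\piT$ annihilates gradients of $H^2$ functions with zero trace. Green's formula applied to the divergence-free field $\rotv\wc_f$ provides this. The $H^1,H^2$-extension assumption is used only to guarantee that $\Cb$ is a Hilbert space with the minimizing norm~\eqref{normTrH2}.
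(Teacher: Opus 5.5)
Your proof is correct and follows the paper's argument essentially verbatim: $\nabla v\in\Hc^1(\Omega)$ places $\piT(\nabla v)$ in $\Hbct$, independence of the $H^2$ representative comes from the same Green's-formula computation as in Theorem~\ref{ThmNablaTtilde}, and continuity follows from the quotient norm~\eqref{EqQuotNormBT} and then the infimum in~\eqref{normTrH2}. Your cross-check via Proposition~\ref{PropSameKer} proves the slightly stronger fact $\nabla v\in\Kc$; your closing remark overstates the role of the $H^2$-extension hypothesis, since $\Cb$ is isometric to the quotient of $H^2(\Omega)$ by the closed subspace $H^2(\Omega)\cap H^1_0(\Omega)$ and so is a Hilbert space with attained infimum without it.
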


\begin{proof}
Let $v \in H^2(\Omega)$. Since $\nabla v \in \Hc^1(\Omega)$, $\piT(\nabla v)$ is well-defined and belongs to $\Hbct$ by Definition~\ref{DefBT}. Independence of the representative is proved exactly as in Theorem~\ref{ThmNablaTtilde}, which applies since $H^2(\Omega) \subset H^1(\Omega)$ and $\Hbct \subset \Hbcpm$ (see (\ref{EqDefBT})).
For continuity, the quotient norm~\eqref{EqQuotNormBT} gives
\begin{equation*}
    \|\nabla_T(\Tr v)\|_{\Hbct} = \|\piT(\nabla v)\|_{\Hbct} \leq \|\nabla v\|_{\Hc^1(\Omega)} \leq \|v\|_{H^2(\Omega)},
\end{equation*}
by~\eqref{normTrH2}, $\|\nabla_T(\Tr v)\|_{\Hbct} \leq \|\Tr v\|_{\Cb}$.
\end{proof}

Once again, since $\nabla_T$ is a bounded linear operator, we can define its adjoint, which is also linear and bounded. This gives the $\Hc^1(\Omega)$ version (or restricted version) of the tangential divergence.

\begin{definition}\label{DefDivT}
Let $\Omega$ be a bounded $H^1,H^2$-extension domain of $\R^3$. We define the restricted tangential divergence operator $\divT : \Hbctm \to \Cbm$ as the adjoint of $-\nabla_T$. It is a linear and bounded operator satisfying
\begin{multline}
    \forall \fc \in \Hbctm,\, \forall z \in \Cb,\\
    \langle \divT\fc, z \rangle_{\Cbm,\, \Cb} = -\langle \fc, \nabla_T z \rangle_{\Hbctm,\, \Hbct}.
\end{multline}
\end{definition}

With this definition, we provide an alternative version of Proposition~\ref{PropTrnCurlDivTtilde}.

\begin{proposition}\label{PropTrnCurlDivT}
Let $\Omega$ be a bounded $H^1,H^2$-extension domain of $\R^3$. For all $\vc \in \Hc^1(\Omega)$, the following equality holds in $\Cbm$:
\begin{equation}\label{EqTrnCurlDivT}
    \trn (\rotv \vc) = \divT (\TrT \vc). 
\end{equation}
\end{proposition}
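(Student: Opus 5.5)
The plan is to test both sides of \eqref{EqTrnCurlDivT} against an arbitrary $z=\Tr u\in\Cb$ with $u\in H^2(\Omega)$. Since $\Cb\subset\Hb$ continuously (the $H^2$-norm dominates the $H^1$-norm), $\trn(\rotv\vc)\in\Hbm$ restricts to an element of $\Cbm$. The identity then reduces to
\begin{equation*}
\langle \trn(\rotv\vc), \Tr u\rangle_{\Hbm,\,\Hb} = -\langle \TrT\vc, \nabla_T(\Tr u)\rangle_{\Hbctm,\,\Hbct} = -\langle \TrT\vc, \piT(\nabla u)\rangle_{\Hbctm,\,\Hbct}
\end{equation*}
for all $u\in H^2(\Omega)$. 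Here we use Definition~\ref{DefDivT} and Theorem~\ref{ThmNablaT}. The pairing on the right is well defined because $\TrT\vc$ acts on $\Hbct$ by Theorem~\ref{ThmTrTtoBtm}; it is the restriction of $\widetilde\TrT$ to $\Hc^1(\Omega)$.

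For the left-hand side, I apply the Green formula \eqref{EqItByPn} to $\rotv\vc\in\Hdiv$, using $\div\rotv\vc=0$:
\[
\langle \trn(\rotv\vc),\Tr u\rangle=\int_\Omega \rotv\vc\cdot\nabla u\dx .
\]
Because $\rotv(\nabla u)=\mathbf 0$, this equals $b(\nabla u,\vc)$, exactly as in the proof of Proposition~\ref{PropTrnCurlDivTtilde}. The decisive simplification is that now $\nabla u\in\Hc^1(\Omega)$. We may therefore invoke the $\Hc^1$ identity \eqref{EqbuvTrTpiT} with the pair $(\vc,\nabla u)$: $b(\vc,\nabla u)=\langle\TrT\vc,\piT(\nabla u)\rangle_{\Hbctm,\,\Hbct}$. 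Antisymmetry of $b$ then gives
\[
b(\nabla u,\vc)=-b(\vc,\nabla u)=-\langle\TrT\vc,\piT(\nabla u)\rangle_{\Hbctm,\,\Hbct},
\]
which is the required identity. In this setting we do not need the density argument with $H^2$ traces used in Proposition~\ref{PropTrnCurlDivTtilde}.

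The only delicate point, and the one I expect to need care, is consistency of the pairings. First, $\divT(\TrT\vc)$ requires $\TrT\vc\in\Hbctm$, which is the refined-target version from Theorem~\ref{ThmTrTtoBtm}. Second, \eqref{EqbuvTrTpiT} must be used in the orientation $b(\uc,\vc)=\langle\TrT\uc,\piT\vc\rangle$ with $\uc=\vc$ and $\vc=\nabla u$. A cross-check is available through \eqref{EqPiTuTrTv}: $b(\nabla u,\vc)=-\langle\piT(\nabla u),\TrT\vc\rangle_{\Hbcpm,\,\Hbcp}$, which agrees with the compatibility relation \eqref{EqCompatHbcpHbct}. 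Finally, I will note that the result is independent of the choice of representative $u$ of $z$. This follows because Theorem~\ref{ThmNablaT} already ensures $\nabla_T$ is well defined on $\Cb$, and the left-hand side depends only on $\Tr u$.
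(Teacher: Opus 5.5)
Your proof is correct and is essentially the paper's own argument. You test against $z=\Tr u$ with $u\in H^2(\Omega)$, use \eqref{EqItByPn} with $\div\rotv\vc=0$ to get $b(\nabla u,\vc)$, then unwind Definition~\ref{DefDivT} and Theorem~\ref{ThmNablaT} and apply \eqref{EqbuvTrTpiT}; the only difference is that you state explicitly the antisymmetry step $b(\nabla u,\vc)=-b(\vc,\nabla u)$, which the paper leaves implicit when it cites \eqref{EqbuvTrTpiT}.
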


\begin{proof}
Let $\vc \in \Hc^1(\Omega)$ and $z = \Tr u \in \Cb$, so $u \in H^2(\Omega) \subset H^1(\Omega)$. Since $\div\rotv\vc = 0$, we have $\rotv\vc \in \Hdiv$ and $\trn(\rotv\vc)$ is well-defined. By Green's formula~\eqref{EqItByPn}:
\begin{equation*}
    \langle \trn(\rotv\vc), z \rangle_{\Hbm,\, \Hb} 
     = b(\nabla u, \vc).
\end{equation*}
By definition of $\divT$ and $\nabla_T$, and by Equation~\eqref{EqbuvTrTpiT} , 
one has
\begin{align*}
    \langle \divT(\TrT\vc), z \rangle_{\Cbm,\, \Cb}
    &= -\langle \TrT\vc, \nabla_T z \rangle_{\Hbctm,\, \Hbct}\\
    &= -\langle \TrT\vc, \piT(\nabla u) \rangle_{\Hbctm,\, \Hbct}\\
    &= b(\nabla u, \vc).
\end{align*}
Since $\Cb \subset \Hb$, both pairings give the same value, which gives~\eqref{EqTrnCurlDivT}.
\end{proof}

\subsubsection{Tangential curl operators}

In the same way as above, we consider restricted versions of the tangential curl operators.

\begin{theorem}\label{ThmCurlcT}
Let $\Omega$ be a bounded $H^1,H^2$-extension domain of $\R^3$. Then the restricted tangential curl operator
\begin{align*}
    \curlcT \colon \Cb &\to \Hbcp \\
    \Tr v & \mapsto \curlcT(\Tr v) = {\TrT}(\nabla v)
\end{align*}
is a well-defined, linear and continuous mapping.
\end{theorem}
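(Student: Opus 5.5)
The proof will run parallel to that of Theorem~\ref{ThmNablaT}, with $\TrT$ playing the role of $\piT$ and the quotient norm~\eqref{EqNormBperp} of $\Hbcp$ replacing~\eqref{EqQuotNormBT}. The argument has three steps: show that $\TrT(\nabla v)$ is well defined and lies in $\Hbcp$, show that it depends only on $\Tr v$, and bound it by $\|\Tr v\|_{\Cb}$.

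\emph{Membership.} Let $v\in H^2(\Omega)$ with $\Tr v = z\in\Cb$. Then $\nabla v\in\Hc^1(\Omega)$, so $\TrT(\nabla v)$ is defined by Theorem~\ref{ThGStokesF}. By Definition~\ref{DefBperp} it belongs to $\Hbcp=\TrT(\Hc^1(\Omega))$.

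\emph{Independence of the representative.} Let $v_1,v_2\in H^2(\Omega)$ with $\Tr v_1=\Tr v_2$, and set $v=v_1-v_2$. Then $v\in H^2(\Omega)$, and by Theorem~\ref{ThTrisom}, $v\in H^1_0(\Omega)$. For any $\wc\in\Hc^1(\Omega)$, the Stokes formula~\eqref{EqStokesTrT} together with $\rotvnesp(\nabla v)=\mathbf 0$ gives
\[
\langle \TrT(\nabla v),\Trc\wc\rangle_{\Hbcm,\,\Hbc} = b(\nabla v,\wc)=\int_\Omega \nabla v\cdot\rotv\wc\dx .
\]
Apply the Green formula~\eqref{EqItByPn} to $\rotv\wc\in\Hdiv$, using $\div\rotv\wc=0$. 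This turns the integral into $\langle \trn(\rotv\wc),\Tr v\rangle_{\Hbm,\,\Hb}$, which vanishes because $\Tr v=0$. This is exactly the computation already done in Theorem~\ref{ThmNablaTtilde}. Since $\Trc$ is surjective onto $\Hbc$, it follows that $\TrT(\nabla v)=\mathbf 0$ in $\Hbcm$, hence also as an element of $\Hbcp$. Linearity is then immediate from the linearity of $\TrT$ and $\nabla$.

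\emph{Continuity.} By the quotient norm~\eqref{EqNormBperp}, with $\nabla v$ itself as an admissible pre-image,
\[
\|\curlcT(\Tr v)\|_{\Hbcp}\le \|\nabla v\|_{\Hc^1(\Omega)}\le \|v\|_{H^2(\Omega)} .
\]
The left-hand side depends only on $\Tr v$, so taking the minimum over all $v$ with $\Tr v=z$ and using~\eqref{normTrH2} yields $\|\curlcT z\|_{\Hbcp}\le\|z\|_{\Cb}$.

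The only step needing care is the independence of the representative. There one must check that the vanishing of $b(\nabla v,\cdot)$ is tested against all of $\Hbc$, so that the identity holds in $\Hbcm$, and not merely in the dual of $\Hbct$. The Green-formula computation above handles this directly.
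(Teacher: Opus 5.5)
Your proposal is correct and follows the paper's route. The paper transfers the proof of Theorem~\ref{ThmNablaT} with $\piT$ replaced by $\TrT$: membership in $\Hbcp$ comes from Definition~\ref{DefBperp}, independence of the representative from the Green-formula computation of Theorem~\ref{ThmNablaTtilde}, and continuity from the quotient norm~\eqref{EqNormBperp} together with~\eqref{normTrH2}. You simply write out the independence step explicitly, testing $b(\nabla v,\cdot)$ against all of $\Hbc$ to get $\TrT(\nabla v)=\mathbf 0$ in $\Hbcm$.
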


\begin{proof}
For $v \in H^2(\Omega)$, $\nabla v \in \Hc^1(\Omega)$, so $\TrT(\nabla v) \in \Hbcp$ by Definition~\ref{DefBperp}. Independence of the representative and continuity follow exactly as in Theorem~\ref{ThmNablaT}, replacing $\piT$ by $\TrT$ and $\Hbct$ by $\Hbcp$.
\end{proof}

Since $\curlcT$ is a bounded linear operator, we can define its adjoint, which is also linear and bounded. This gives the $\Hc^1(\Omega)$ version of the scalar tangential curl.

\begin{definition}\label{DefCurlT}
Let $\Omega$ be a bounded $H^1,H^2$-extension domain of $\R^3$. We define the restricted scalar tangential curl operator $\curlT : \Hbcpm \to \Cbm$ as the adjoint of $\curlcT$. It is a linear and bounded operator satisfying
\begin{multline}
    \forall \fc \in \Hbcpm,\, \forall z \in \Cb,\\
    \langle \curlT\fc, z \rangle_{\Cbm,\, \Cb} = \langle \fc, \curlcT z \rangle_{\Hbcpm,\, \Hbcp}.
\end{multline}
\end{definition}

With this definition, we finally provide an alternative version of Proposition~\ref{PropTrnCurlCurlTtilde}.

\begin{proposition}\label{PropTrnCurlCurlT}
Let $\Omega$ be a bounded $H^1,H^2$-extension domain of $\R^3$. For all $\vc \in \Hc^1(\Omega)$, the following equality holds in $\Cbm$:
\begin{equation}\label{EqTrnCurlCurlT}
    \trn (\rotv \vc) = \curlT (\piT\vc). 
\end{equation}
\end{proposition}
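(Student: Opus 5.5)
We mirror the proof of Proposition~\ref{PropTrnCurlDivT}, now pairing with the tangential trace $\TrT$ rather than the tangential component $\piT$. Fix $\vc \in \Hc^1(\Omega)$ and $z = \Tr u \in \Cb$ with $u \in H^2(\Omega)$. Then $\nabla u \in \Hc^1(\Omega)$ and $\rotvnesp(\nabla u) = \mathbf{0}$. We first compute the left-hand side. Since $\div \rotv\vc = 0$, we have $\rotv\vc \in \Hdiv$, and Green's formula~\eqref{EqItByPn} gives
\begin{equation*}
\langle \trn(\rotv\vc), z\rangle_{\Hbm,\,\Hb} = \int_\Omega \rotv\vc\cdot\nabla u \dx = b(\nabla u,\vc).
\end{equation*}
The last equality holds because the term $\int_\Omega \vc\cdot\rotvnesp(\nabla u)\dx$ in $b(\nabla u,\vc)$ vanishes.

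Next we compute the right-hand side. By Definition~\ref{DefCurlT} and Theorem~\ref{ThmCurlcT},
\begin{equation*}
\langle \curlT(\piT\vc), z\rangle_{\Cbm,\,\Cb} = \langle \piT\vc, \curlcT z\rangle_{\Hbcpm,\,\Hbcp} = \langle \piT\vc, \TrT(\nabla u)\rangle_{\Hbcpm,\,\Hbcp}.
\end{equation*}
Both $\vc$ and $\nabla u$ lie in $\Hc^1(\Omega)$, so we may apply formula~\eqref{EqPiTuTrTv} of Proposition~\ref{PropPiTH1} to the pair $(\nabla u, \vc)$. This gives $b(\nabla u,\vc) = \langle \piT\vc, \TrT(\nabla u)\rangle_{\Hbcpm,\,\Hbcp}$, which is exactly the expression above. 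Hence the two pairings agree for every $z\in\Cb$.

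The only point that needs care is the bookkeeping of the duality pairings. The left-hand side lives in $\Hbm$ and the identity is claimed in $\Cbm$. Since $\Cb \subset \Hb$, the restriction of $\trn(\rotv\vc)$ to $\Cb$ is well defined, and it is continuous for the $\Cb$ norm: for $u\in H^2(\Omega)$ we have $\|u\|_{H^1(\Omega)}\le\|u\|_{H^2(\Omega)}$, so by~\eqref{normTri} and~\eqref{normTrH2} the embedding $\Cb\hookrightarrow\Hb$ is continuous. With this observation the equality holds in $\Cbm$. Compared with the $\Hcurl$-version in Proposition~\ref{PropTrnCurlCurlTtilde}, the argument is simpler: no decomposition $\vc=\vc^0+\vc^\perp$ and no density argument are needed, because~\eqref{EqPiTuTrTv} holds for arbitrary pre-images in $\Hc^1(\Omega)$.
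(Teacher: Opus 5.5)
Your proof is correct and follows the paper's route. The paper's proof simply says to repeat the proof of Proposition~\ref{PropTrnCurlDivT} with formula~\eqref{EqPiTuTrTv} in place of~\eqref{EqbuvTrTpiT}, which is exactly your chain $\langle \trn(\rotv\vc), \Tr u\rangle = b(\nabla u,\vc) = \langle \piT\vc, \TrT(\nabla u)\rangle$. Your remark on the continuous embedding $\Cb \hookrightarrow \Hb$ makes explicit what the paper states only as ``since $\Cb \subset \Hb$, both pairings give the same value''.
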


\begin{proof}
The proof is identical to that of Proposition~\ref{PropTrnCurlDivT}, using formula~\eqref{EqPiTuTrTv} in place of~\eqref{EqbuvTrTpiT}.
\end{proof}

\section{Hodge-Dirac operator}\label{SecHodgeDirac}

As an illustration of the trace theory developed in this article, we finally show that the Green's formula for the Hodge-Dirac operator established in~\cite{Schulz_2022} for Lipschitz domains extends to bounded $H^1$-extension domains. This extension is a direct consequence of the normal and tangential trace operators defined in Sections~\ref{sec:normal_trace},~\ref{sec:tangential_trace} and~\ref{sec:tangential_component}, together with the integration by parts formula~\eqref{EqbuvTrTpiT} of Theorem~\ref{ThmTrTtoBtm}.

We introduce the exterior derivative $\boldsymbol d$ and codifferential operator $\boldsymbol \delta$:
\[
\boldsymbol{d} := 
\begin{pmatrix}
0 & \boldsymbol{0}^\top & \boldsymbol{0}^\top & 0 \\
\nabla & \boldsymbol{0}_{3 \times 3} & \boldsymbol{0}_{3 \times 3} & \boldsymbol{0} \\
\boldsymbol{0} & \textbf{curl} & \boldsymbol{0}_{3 \times 3} & \boldsymbol{0} \\
0 & \boldsymbol{0}^\top & \text{div} & 0
\end{pmatrix}
\quad \text{and} \quad
\boldsymbol{\delta} := 
\begin{pmatrix}
0 & -\text{div} & \boldsymbol{0}^\top & 0 \\
0 & \boldsymbol{0}_{3 \times 3} & \textbf{curl} & \boldsymbol{0} \\
\boldsymbol{0} & \boldsymbol{0}_{3 \times 3} & \boldsymbol{0}_{3 \times 3} & -\nabla \\
0 & \boldsymbol{0}^\top & \boldsymbol{0}^\top & 0
\end{pmatrix}.
\]

Let $D := \boldsymbol d + \boldsymbol\delta$ denote the Hodge–Dirac operator. Since our tangential trace operators are only defined on $\Hc^1(\Omega)$, we do not work in $\Hcurl$ and $\Hdiv$ as in~\cite{Schulz_2022}, but rather within $\Hc^1(\Omega)$. We keep the convention of using bold symbols for elements in $\mathbb{R}^3$, and use an overhead arrow to denote elements in $\mathbb{R}^8$: for example, $\Hconevec$ stands for $\left(H^1(\Omega)\right)^8$.
\\

\noindent For $\vec{\boldsymbol U} = 
\begin{pmatrix}
U_0 \\
\boldsymbol U_1 \\
\boldsymbol U_2 \\
U_3
\end{pmatrix} 
\in \Hconevec$, we define the following two trace vectors:
\begin{align*}
\gamma_T(\vec{\boldsymbol{U}}) &:= 
\begin{pmatrix}
\Tr U_0 \\
\piT \boldsymbol{U}_1 \\
\trn \boldsymbol{U}_2
\end{pmatrix}
\quad \text{and} \quad
\gamma_R(\vec{\boldsymbol{U}}) :=
\begin{pmatrix}
\trn \boldsymbol{U}_1 \\
\TrT \boldsymbol{U}_2 \\
\Tr U_3
\end{pmatrix}
\end{align*}

We are now able to prove the following Green's formulas for the Hodge-Dirac operator.

\begin{theorem}
Let $\Omega$ be a bounded $H^1$-extension domain of $\mathbb{R}^3$. We have the following two formulas:
\begin{enumerate}
\item[(i)] $\forall\, \vec{\boldsymbol U},\vec{\boldsymbol V} \in \Hconevec,$
\begin{equation*}
\int_\Omega \boldsymbol d \vec{\boldsymbol U} \cdot \vec{\boldsymbol V}\dx = \int_\Omega \vec{\boldsymbol U} \cdot \boldsymbol \delta \vec{\boldsymbol V} \dx + \llangle \gamma_T \vec{\boldsymbol U}, \gamma_R \vec{\boldsymbol V} \rrangle_{\partial\Omega},
\end{equation*}
\item[(ii)] $\forall \, \vec{\boldsymbol U},\vec{\boldsymbol V} \in \Hconevec,$
\begin{equation*}
\int_\Omega D\vec{\boldsymbol U} \cdot \vec{\boldsymbol V}\dx = \int_\Omega \vec{\boldsymbol U} \cdot D\vec{\boldsymbol V} \dx + \llangle \gamma_T \vec{\boldsymbol U}, \gamma_R \vec{\boldsymbol V} \rrangle_{\partial\Omega} - \llangle \gamma_T \vec{\boldsymbol V}, \gamma_R \vec{\boldsymbol U} \rrangle_{\partial\Omega},
\end{equation*}
\end{enumerate}
where
\begin{align*}
\llangle \gamma_T \vec{\boldsymbol U}, \gamma_R \vec{\boldsymbol V} \rrangle_{\partial\Omega} 
&= \left\langle\trn \boldsymbol V_1,\Tr U_0\right\rangle_{\Hbm,\,\Hb}  + \left\langle \TrT \boldsymbol V_2, \piT \boldsymbol U_1 \right\rangle_{\Hbctm,\, \Hbct} \notag \\
&\quad + \left\langle\trn \boldsymbol U_2,\Tr V_3\right\rangle_{\Hbm,\,\Hb}.
\end{align*}
\end{theorem}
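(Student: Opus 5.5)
The plan is to prove (i) by expanding both sides component by component and recognising, in each of the three nonzero rows of $\boldsymbol d$, one of the integration by parts formulas already established. By the block structure,
\begin{equation*}
\boldsymbol d\vec{\boldsymbol U}\cdot\vec{\boldsymbol V} = \nabla U_0\cdot\boldsymbol V_1 + \rotv\boldsymbol U_1\cdot\boldsymbol V_2 + (\div\boldsymbol U_2)\,V_3,
\end{equation*}
and
\begin{equation*}
\vec{\boldsymbol U}\cdot\boldsymbol\delta\vec{\boldsymbol V} = -U_0\,\div\boldsymbol V_1 + \boldsymbol U_1\cdot\rotv\boldsymbol V_2 - \boldsymbol U_2\cdot\nabla V_3 .
\end{equation*}
The difference of the two integrals therefore splits into three independent pairings, and I treat each one separately.

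\emph{Degree 0 and degree 2.} For the first pair I apply the Green formula~\eqref{EqItByPn} of Theorem~\ref{ThNormalTrace} with $\uc=\boldsymbol V_1\in\Hc^1(\Omega)\subset\Hdiv$ (using~\eqref{IneqCdiv}) and $v=U_0$. This gives
\begin{equation*}
\int_\Omega(\nabla U_0\cdot\boldsymbol V_1+U_0\div\boldsymbol V_1)\dx=\langle\trn\boldsymbol V_1,\Tr U_0\rangle_{\Hbm,\,\Hb}.
\end{equation*}
For the third pair I use the same formula with $\uc=\boldsymbol U_2$ and $v=V_3$, which gives
\begin{equation*}
\int_\Omega\bigl((\div\boldsymbol U_2)V_3+\boldsymbol U_2\cdot\nabla V_3\bigr)\dx=\langle\trn\boldsymbol U_2,\Tr V_3\rangle_{\Hbm,\,\Hb}.
\end{equation*}

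\emph{Degree 1.} This is the only step that needs care, since signs and the argument order in $b$ must match. The middle pair equals
\begin{equation*}
\int_\Omega(\rotv\boldsymbol U_1\cdot\boldsymbol V_2-\boldsymbol U_1\cdot\rotv\boldsymbol V_2)\dx=-b(\boldsymbol U_1,\boldsymbol V_2)=b(\boldsymbol V_2,\boldsymbol U_1).
\end{equation*}
I then invoke~\eqref{EqbuvTrTpiT} of Theorem~\ref{ThmTrTtoBtm} with $\uc=\boldsymbol V_2$ and $\vc=\boldsymbol U_1$, both in $\Hc^1(\Omega)$. This yields $\langle\TrT\boldsymbol V_2,\piT\boldsymbol U_1\rangle_{\Hbctm,\,\Hbct}$, which is exactly the middle term of $\llangle\gamma_T\vec{\boldsymbol U},\gamma_R\vec{\boldsymbol V}\rrangle_{\partial\Omega}$. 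Summing the three identities proves (i). The $\Hc^1$ hypothesis is essential here: it is what makes both $\piT\boldsymbol U_1\in\Hbct$ and $\TrT\boldsymbol V_2\in\Hbcp\subset\Hbctm$ well defined.

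\emph{Formula (ii).} I write $D=\boldsymbol d+\boldsymbol\delta$ and use that $\boldsymbol\delta$ is the pointwise transpose-adjoint of $\boldsymbol d$. That is, $\int\boldsymbol\delta\vec{\boldsymbol U}\cdot\vec{\boldsymbol V}=\int\vec{\boldsymbol V}\cdot\boldsymbol\delta\vec{\boldsymbol U}$ trivially, since both are the same scalar integral. Applying (i) with the roles of $\vec{\boldsymbol U}$ and $\vec{\boldsymbol V}$ swapped gives
\begin{equation*}
\int_\Omega\vec{\boldsymbol U}\cdot\boldsymbol d\vec{\boldsymbol V}\dx=\int_\Omega\boldsymbol\delta\vec{\boldsymbol U}\cdot\vec{\boldsymbol V}\dx+\llangle\gamma_T\vec{\boldsymbol V},\gamma_R\vec{\boldsymbol U}\rrangle_{\partial\Omega}.
\end{equation*}
Adding this to (i) and rearranging produces (ii) directly.
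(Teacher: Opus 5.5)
Your proof is correct and takes the same route as the paper. The paper's proof also refers to Schulz--Hiptmair, uses the Green formula \eqref{EqItByPn} for the two normal-trace brackets, and uses \eqref{EqbuvTrTpiT} (here with $\uc=\boldsymbol V_2$, $\vc=\boldsymbol U_1$) for the tangential bracket; your signs check out, and obtaining (ii) by applying (i) with $\vec{\boldsymbol U}$ and $\vec{\boldsymbol V}$ swapped and adding is exactly the intended step.
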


\begin{proof}
The proof is exactly the same as in~\cite{Schulz_2022}, using formula~\eqref{EqItByPn} for the normal trace brackets (which is possible since $\Hc^1(\Omega) \subset \Hdiv$), resp. formula~\eqref{EqbuvTrTpiT}
for the tangential traces bracket.
\end{proof}

\begin{remark}
In~\cite{Schulz_2022}, the Green's formula is established for $\vec{\boldsymbol{U}}, \vec{\boldsymbol{V}} \in \Hc(D,\Omega)$, the full energy space of the Dirac operator, using the surjectivity of the trace operators on $\Hcurl$ and $\Hdiv$. In our setting, restricting to $\Hconevec$ allows us to use the tangential trace operators defined in this article, at the cost of working in a smaller function space. While the broader tangential trace $\widetilde{\TrT}$ is already defined on $\Hcurl$ for $H^1$-extension domains (Theorem~\ref{ThmTrTtoBtm}), extending the Green's formula to the full energy space $\Hc(D,\Omega)$ would require establishing the surjectivity of $\widetilde{\TrT}$ and $\widetilde{\piT}$ from $\Hcurl$ onto appropriate boundary spaces, together with an extension of formula~\eqref{EqbuvTrTpiT} to
$\uc,\vc\in\Hcurl$.
\end{remark}

\section*{Acknowledgments}
\addcontentsline{toc}{section}{Acknowledgments}
The authors thank C. Verret for her initial joint work with R. Cervera on the subject under the supervision of A. Rozanova-Pierrat.
A. Rozanova-Pierrat thanks J.~Tomasik, F.~Magoulès, X.~Claeys and M.~Hinz for their interest in this work and for helpful discussions.
The first author acknowledges the opportunity provided by the Recherche teaching program at CentraleSupélec, during which this work was initiated.
The third author was supported in part by CNRS INSMI IEA (International Emerging Actions 2022) ``Functional and applied analysis with fractal or
non-Lipschitz boundaries''.
The fourth author was supported in part by the NSF, Fulbright and Simons foundations; the project was conducted during a stay at CentraleSup\'elec, Universit\'e Paris-Saclay, whose kind hospitality is gratefully acknowledged.

\newpage 
\addcontentsline{toc}{section}{References}

\def\refname{References}
\bibliographystyle{siam}
\bibliography{biblio.bib}

\begin{thebibliography}{10}

\bibitem{Adams_2010}
{\sc D.~R. Adams and L.~I. Hedberg}, {\em Function Spaces and Potential Theory}, Grundlehren der mathematischen Wissenschaften, Springer, Berlin, Germany, Dec. 2010.

\bibitem{ADAMS-2003}
{\sc R.~A. {A}dams and J.~J.~F. {F}ournier}, {\em {S}obolev spaces}, {A}cademic {P}ress, 2003.

\bibitem{Arnold_2018}
{\sc D.~N. Arnold}, {\em Finite Element Exterior Calculus}, Society for Industrial and Applied Mathematics, Philadelphia, PA, 2018.

\bibitem{Assous_2018}
{\sc F.~Assous, P.~Ciarlet, and S.~Labrunie}, {\em Mathematical foundations of computational electromagnetism}, Applied mathematical sciences, Springer International Publishing, Cham, Switzerland, 1~ed., June 2018.

\bibitem{BIEGERT-2009}
{\sc M.~{B}iegert}, {\em {O}n traces of {S}obolev functions on the boundary of extension domains}, {P}roceedings of the {A}merican {M}athematical {S}ociety, 137 (2009), pp.~4169--4176.

\bibitem{Bouleau2010}
{\sc N.~Bouleau and F.~Hirsch}, {\em Dirichlet Forms and Analysis on Wiener Space}, De Gruyter Studies in Mathematics, De Gruyter, Berlin, Germany, Oct. 2010.

\bibitem{BUFFA-2002}
{\sc A.~Buffa, M.~Costabel, and D.~Sheen}, {\em {O}n traces for ${H}(curl,{O}mega)$ in {L}ipschitz domains}, {J}. {M}ath. {A}nal. {A}ppl., 276 (2002), pp.~845--867.

\bibitem{Chen_2012}
{\sc Z.-Q. Chen and M.~Fukushima}, {\em Symmetric Markov Processes, Time Change, and Boundary Theory}, Princeton University Press, Princeton, NJ, 2012.

\bibitem{CHICAUD-2021-1}
{\sc D.~{C}hicaud}, {\em {A}nalysis of time-harmonic electromagnetic problems in elliptic anisotropic media}, Doctoral dissertation, {I}nstitut {P}olytechnique de {P}aris, tel-03627600,  (2021).

\bibitem{CHICAUD-2023}
{\sc D.~Chicaud and P.~Ciarlet}, {\em Analysis of {T}ime-{H}armonic {M}axwell {I}mpedance {P}roblems in {A}nisotropic {M}edia}, SIAM J. Math. Anal., 55 (2023), pp.~1969--2000.

\bibitem{CHICAUD-2021}
{\sc D.~{C}hicaud, P.~{C}iarlet, and A.~{M}odave}, {\em {A}nalysis of {V}ariational {F}ormulations and {L}ow-regularity {S}olutions for {T}ime-harmonic {E}lectromagnetic {P}roblems in {C}omplex {A}nisotropic {M}edia}, {S}{I}{A}{M} {J}ournal on {M}athematical {A}nalysis, 53 (2021), pp.~2691--2717.

\bibitem{Ciarlet_2024}
{\sc P.~Ciarlet}, {\em Notes de cours sur les équations de {M}axwell et leur approximation}.
\newblock \url{https://hal.science/hal-03153780v4}, 2024.

\bibitem{claeys-2012}
{\sc X.~Claeys and R.~Hiptmair}, {\em {Electromagnetic scattering at composite objects : a novel multi-trace boundary integral formulation}}, ESAIM Mathematical Modelling and Numerical Analysis, 46 (2012), pp.~1421--1445.

\bibitem{CLARET_2026}
{\sc G.~Claret, M.~Hinz, A.~Rozanova-Pierrat, and A.~Teplyaev}, {\em Layer potential operators for transmission problems on extension domains}, Journal de Mathématiques Pures et Appliquées, 212 (2026), p.~103888.

\bibitem{COSTABEL-1991}
{\sc M.~Costabel}, {\em {A} coercive bilinear form for {M}axwell's equations}, Journal of Mathematical Analysis and Applications, 157 (1991), pp.~527--541.

\bibitem{Creo_2024}
{\sc S.~Creo and M.~R. Lancia}, {\em The p-curl system in extension domains}, Discrete and Continuous Dynamical Systems - S, 17 (2024), pp.~2208--2223.

\bibitem{Creo_2018}
{\sc S.~{C}reo, M.~R. {L}ancia, P.~{V}ernole, M.~{H}inz, and A.~{T}eplyaev}, {\em {M}agnetostatic problems in fractal domains}, in Analysis, Probability and Mathematical Physics on Fractals, World Scientific, 2020, pp.~477--502.

\bibitem{Fukushima_2011}
{\sc M.~Fukushima, Y.~Oshima, and M.~Takeda}, {\em Dirichlet Forms and Symmetric Markov Processes 2nd Edition}, De Gruyter Studies in Mathematics, De Gruyter, Berlin, Germany, 2011.

\bibitem{GIRAULT-1986}
{\sc V.~{G}irault and P.-A. {R}aviart}, {\em {F}inite {E}lement {M}ethods for the {N}avier-{S}tokes {E}quations, {T}heory and {A}lgorithms}, {S}pringer, {N}ew {Y}ork, 1986.

\bibitem{Grisvard_1992}
{\sc P.~Grisvard}, {\em Singularities in Boundary Value Problems}, vol.~22 of Recherches en Math{\'e}matiques Appliqu{\'e}es, Masson, Paris, 1992.

\bibitem{HAJLASZ-2008-1}
{\sc P.~{H}ajłasz, P.~{K}oskela, and H.~{T}uominen}, {\em {M}easure density and extendability of {S}obolev functions}, {R}evista {M}atem{\'a}tica {I}beroamericana,  (2008), pp.~645--669.

\bibitem{HAJLASZ-2008}
\leavevmode\vrule height 2pt depth -1.6pt width 23pt, {\em {S}obolev embeddings, extensions and measure density condition}, {J}ournal of {F}unctional {A}nalysis, 254 (2008), pp.~1217--1234.

\bibitem{HINZ-2021-1}
{\sc M.~{H}inz, A.~{R}ozanova{-}{P}ierrat, and A.~{T}eplyaev}, {\em {N}on-{L}ipschitz {U}niform {D}omain {S}hape {O}ptimization in {L}inear {A}coustics}, {S}{I}{A}{M} {J}ournal on {C}ontrol and {O}ptimization, 59 (2021), pp.~1007--1032.

\bibitem{HINZ-2023}
\leavevmode\vrule height 2pt depth -1.6pt width 23pt, {\em {\NoAutoSpaceBeforeFDP} {{}B}oundary value problems on non-{L}ipschitz uniform domains: stability, compactness and the existence of optimal shapes{\AutoSpaceBeforeFDP}}, {A}symptotic {A}nalysis,  (2023), pp.~1--37.

\bibitem{JONES-1981}
{\sc P.~W. {J}ones}, {\em {Q}uasi conformal mappings and extendability of functions in {S}obolev spaces}, {A}cta {M}athematica, 147 (1981), pp.~71--88.

\bibitem{JONSSON-1984}
{\sc A.~{J}onsson and H.~{W}allin}, {\em {F}unction spaces on subsets of $\mathbb{{R}}^n$}, {M}ath. {R}eports 2, {P}art 1, {H}arwood {A}cad. {P}ubl. {L}ondon, 1984.

\bibitem{LANCIA-2002}
{\sc M.~R. {L}ancia}, {\em {A} {T}ransmission {P}roblem with a {F}ractal {I}nterface}, {Z}eitschrift f{\"u}r {A}nalysis und ihre {A}nwendungen, 21 (2002), pp.~113--133.

\bibitem{Monk_2003}
{\sc P.~Monk}, {\em Finite Element Methods for Maxwell's Equations}, Numerical Mathematics and Scientific Computation, Clarendon Press, 2003.

\bibitem{Nédélec2001}
{\sc J.-C. N{\'e}d{\'e}lec}, {\em Integral Representations and Integral Equations}, Springer New York, New York, NY, 2001, pp.~110--149.

\bibitem{NYSTROM-1996}
{\sc K.~{N}ystr{\"o}m}, {\em {I}ntegrability of {G}reen potentials in fractal domains}, {A}rkiv f{\"o}r {M}atematik, 34 (1996), pp.~335--381.

\bibitem{ROGERS-2006}
{\sc L.~G. {R}ogers}, {\em {D}egree-independent {S}obolev extension on locally uniform domains}, {J}ournal of {F}unctional {A}nalysis, 235 (2006), pp.~619--665.

\bibitem{ROZANOVA-PIERRAT-2026-1}
{\sc A.~{R}ozanova{-}{P}ierrat}, {\em {O}n analysis of problems of mathematical physics with non-{L}ipschitz boundaries}, {F}ractal {G}eometry and {S}tochastics, {P}rogress in {P}robability, {S}pringer {N}ature, 2026.

\bibitem{Schulz_2022}
{\sc E.~Schulz and R.~Hiptmair}, {\em First-kind boundary integral equations for the dirac operator in 3-dimensional lipschitz domains}, SIAM Journal on Mathematical Analysis, 54 (2022), p.~616–648.

\bibitem{WALLIN-1991}
{\sc H.~{W}allin}, {\em {T}he trace to the boundary of {S}obolev spaces on a snowflake}, {M}anuscripta {M}ath, 73 (1991), pp.~117--125.

\end{thebibliography}

\end{document}